\documentclass[11pt,a4paper]{article}
\usepackage[T1]{fontenc}
\usepackage[utf8]{inputenc}
\usepackage{lmodern}
\usepackage{amsmath,amssymb,amsthm,mathtools}
\usepackage[margin=1in,marginparwidth=2cm]{geometry}
\usepackage[round,authoryear]{natbib}
\usepackage{xcolor}
\usepackage[colorinlistoftodos,textsize=small]{todonotes}
\usepackage[hidelinks,hypertexnames=false]{hyperref}
\newcommand{\doi}[1]{doi:\ \href{https://doi.org/#1}{\nolinkurl{#1}}}
\DeclareMathOperator{\tr}{tr}
\DeclareMathOperator{\Var}{Var}
\DeclareMathOperator{\Cov}{Cov}
\DeclareMathOperator{\diag}{diag}

\makeatletter
\newcommand{\assumptionlabel}[2]{\begingroup
  \def\@currentlabel{#1}\phantomsection\label{#2}\endgroup}
\makeatother
\newtheorem{theorem}{Theorem}[section]
\newtheorem{lemma}[theorem]{Lemma}
\newtheorem{proposition}[theorem]{Proposition}

\theoremstyle{remark}
\newtheorem{remark}{Remark}[section]
\numberwithin{equation}{section}
\begin{document}

\title{Effective-dimension bounds in the central limit theorem for centered Euclidean balls}
\author{Alexey Naumov\\
  \small HSE University\\
  \small \texttt{anaumov@hse.ru}
  \and Sergey Samsonov\\
  \small HSE University\\
  \small \texttt{svsamsonov@hse.ru}
  \and Vladimir Spokoiny\\
  \small Weierstrass Institute\\
  \small HSE University\\
  \small \texttt{spokoiny@wias-berlin.de}
  \and Vladimir Ulyanov\\
  \small Lomonosov MSU \\
  \small HSE University\\
  \small \texttt{vulyanov@cs.msu.ru}}
\maketitle
\begin{abstract}
We establish a nonasymptotic Gaussian approximation bound over centered Euclidean balls for sums of independent, not necessarily identically distributed, random vectors in \(\mathbb R^p\). Let \(S_n\) denote the sum, let \(\Sigma=\operatorname{Var}(S_n)\), and let \(G\sim N(0,\Sigma)\). Define the effective rank and the normalized squared spectral mass by
\[
r(\Sigma)=\frac{\operatorname{tr}\Sigma}{\|\Sigma\|},
\qquad
v(\Sigma)=\frac{\operatorname{tr}(\Sigma^2)}{\|\Sigma\|^2}.
\]
Under a uniform directional fourth-moment condition, covariance-balance assumptions, and the spectral conditions \(r(\Sigma)\le C_\Sigma v(\Sigma)\) and \(v(\Sigma)\ge6\), we prove
\[
\sup_{z\ge0}
\left|
\mathbb P\{\|S_n\|^2\le z\}
-
\mathbb P\{\|G\|^2\le z\}
\right|
\le
C\,\frac{r(\Sigma)^{3/2}}{n}.
\]
The constant depends only on the fixed parameters in the assumptions and is independent of the ambient dimension and individual small eigenvalues of the covariance matrix. The bound is uniform at the endpoint \(v(\Sigma)=6\), without a logarithmic loss. Its main feature is the combination of the \(n^{-1}\) rate with dependence on effective rank, including this endpoint. Consequently, the bound can vanish when \(r(\Sigma)=o(n^{2/3})\), even when \(p\) is much larger than \(n\). The same estimate extends to random elements of a separable Hilbert space with trace class covariance. The proof builds on the Fourier and symmetrization methods of \citet{GoetzeZaitsev2014}, combined with moment matching, truncation, and comparison estimates that retain the covariance structure.
\end{abstract}
\noindent AMS 2010 Subject Classification: Primary 60B12, 60F15, 62E17.
Secondary 62F40.\\
\noindent Keywords: Berry--Esseen inequality, Stein method, bootstrap.

\section{Introduction}\label{sec:introduction}
Let $\boldsymbol X_1,\ldots,\boldsymbol X_n$
be independent mean zero random vectors in $\mathbb R^p, \, p \in \mathbb N$, and write $\boldsymbol S_n=\sum_{i=1}^n\boldsymbol X_i$. Let $\Sigma=\Var(\boldsymbol S_n)$ and $\widetilde{\boldsymbol S}\sim\mathcal N(0,\Sigma)$ be a Gaussian element in $\mathbb R^p$. 
For a class $\mathcal A$ of Borel sets, the Gaussian approximation
error is
\begin{equation}\label{eq:intro-distance}
 \Delta_n(\mathcal A)=\sup_{A\in\mathcal A}
 \left|\mathbb P(\boldsymbol S_n\in A)
       -\mathbb P(\widetilde{\boldsymbol S}\in A)\right|.
\end{equation}
We seek nonasymptotic bounds on $\Delta_n(\mathcal A)$ with explicit dependence on $n$, the dimension $p$, and the spectrum of $\Sigma$.
The dimension may grow with $n$. We also discuss the extension to a
separable Hilbert space, where $p=\infty$ and $\Sigma$ is a trace class
covariance operator. 
Three particularly important choices of $\mathcal A$ are convex sets,
Euclidean balls, and coordinate rectangles. These classes lead to
different rates and different forms of dimension and covariance
dependence.

The early work of \citet{Bergstrom1945} and \citet{Esseen1945}
initiated the study of quantitative bounds in the multivariate
central limit theorem. Subsequent developments include
\citet{Sazonov1968,Bhattacharya1975,Nagaev1976,Senatov1980,Sazonov1981}.
For normalized i.i.d.\ sums in fixed dimension, the classical
Berry--Esseen rate $n^{-1/2}$ is available under a finite third
absolute moment. When the dimension increases, both the constants
and the moment factors become essential parts of the bound.

For balls in a Hilbert space, the development of the theory has
addressed three related questions: the optimal dependence on the
sample size, the weakest moment assumptions, and the dependence
on the covariance eigenvalues; see \citet[Section~1]{PU2013}.
Results in \citet{Sazonov1988} and \citet{ZSU1991} form part of this
development. In particular, \citet{ZSU1991} obtained an $n^{-1/2}$
bound under a finite third moment, with a constant expressed through
the first six covariance eigenvalues. The bound applies to balls
with an arbitrary fixed center, with additional dependence on that
center. Its spectral dependence is sharp in this setting. This
six-eigenvalue result concerns third-moment approximation for
translated balls; it does not by itself determine the spectral
threshold for an $n^{-1}$ bound over centered balls.

Faster rates are possible for centered balls and ellipsoids.
For centered ellipsoids with axes parallel to the coordinate axes,
\citet{Esseen1945} obtained the rate $n^{-p/(p+1)}$ under a
fourth-moment assumption.
\citet{BentkusGoetze1996,BentkusGoetze1997} established $n^{-1}$
rates for quadratic forms under suitable dimension conditions.
\citet{GoetzeZaitsev2014} proved that, for i.i.d.\ summands with a
finite fourth moment, a nonsingular covariance, and a nondegenerate
quadratic form in fixed dimension $p\ge5$, the approximation error
is $O(n^{-1})$. For centered balls, their explicit finite-dimensional estimate contains
a constant depending on $p$, the inverse square root of the
covariance determinant, and a standardized fourth moment. Thus,
already in dimension five, an $n^{-1}$ rate is available, but that
estimate deteriorates when the smallest covariance eigenvalue
becomes small relative to the largest. Their general theorem also covers Hilbert spaces,
with a constant depending on the covariance operator and the
quadratic form. Related results for balls and short asymptotic
expansions were obtained by
\citet{GotzeUlyanov2000,BogatyrevGoetzeUlyanov2006}.
These results motivate bounds that retain the faster rate while
controlling the constants uniformly over growing dimensions and
varying covariance spectra. The case of non-identically distributed summands is considered by \citet{Ulyanov1987}.

The dependence on dimension was studied systematically by
\citet{Bentkus1986,Bentkus2003,Bentkus2005}. For the class
$\mathcal C_p$ of all convex Borel sets and nonsingular $\Sigma$,
his results yield
\begin{equation}\label{eq:intro-bentkus}
 \Delta_n(\mathcal C_p)
 \le Cp^{1/4}\sum_{i=1}^n
       \mathbb E\|\Sigma^{-1/2}\boldsymbol X_i\|^3.
\end{equation}
The i.i.d.\ result in \citet{Bentkus2003} has the explicit constant
$400$, and \citet{Bentkus2005} extends the $p^{1/4}$ dependence to
nonidentically distributed summands. 
The standardized moments also contain dimension dependence: for
$\Var(\boldsymbol Y_1)=\Sigma$,
$\mathbb E\|\Sigma^{-1/2}\boldsymbol Y_1\|^3\ge p^{3/2}$.
Thus, when this moment is of order $p^{3/2}$,
\eqref{eq:intro-bentkus} is of order $p^{7/4}/\sqrt n$.
Whitening is natural for the affine-invariant class of convex sets,
but its moment term does not become smaller merely because some
eigenvalues of $\Sigma$ are small. 
Restricting the class of sets can improve the approximation rate.
For balls and identity covariance, the $p^{1/4}$ factor in
\eqref{eq:intro-bentkus} can be replaced by an absolute constant
\citep{Bentkus2003}. A further improvement is possible for
balls centered at the origin. At the level of a formal Edgeworth
expansion, the first correction is odd and integrates to zero over
a centered ball. Turning this cancellation into a uniform bound
requires control of the remainder, including possible lattice
effects.

Stein's method supplies another major approach to these questions.
\citet{Gotze1991} developed a multivariate normal approximation
argument for nonsmooth test functions. The concentration-inequality
approach of \citet{ChenShao2007}, formulated for real-valued nonlinear
statistics, was extended to multivariate approximation by
\citet{ChenFang2015}; for independent standardized sums their convex-set
bound has a factor of order $p^{1/2}$ multiplying the sum of third
absolute moments. \citet{ShaoZhang2022} established Berry--Esseen
bounds for multivariate nonlinear statistics, with applications to
$M$-estimators and averaged stochastic gradient descent.
\citet{ChenGoldsteinShao2011} gives a systematic account of Stein's
method.  \citet{Raic2019}
obtained the explicit factor $42p^{1/4}+16$ in \eqref{eq:intro-bentkus} by Stein's method.
More recently, \citet{FangKoike2024Balls} obtained bounds
based on fourth moments for convex sets and balls, improving the
dimension dependence available from classical third-moment bounds.
For $\boldsymbol S_n=n^{-1/2}\sum_{i=1}^n\boldsymbol Y_i$
with i.i.d.\ isotropic $\boldsymbol Y_i$ satisfying
$\mathbb E\|\boldsymbol Y_1\|^4\le Cp^2$, their bounds have orders
$p^{5/4}/\sqrt n$ for convex sets and $p/\sqrt n$ for all balls,
up to logarithmic factors.

A different line of work concerns the class $\mathcal R_p$ of
coordinate rectangles $\prod_{j=1}^p[a_j,b_j]$ and the distributions
of coordinatewise maxima. \citet{CCK2013,CCK2017} developed Gaussian
and bootstrap approximations with logarithmic dependence on $p$,
allowing the ambient dimension to be much larger than the sample size.
For example, for $p,n\ge3$, consider
$\boldsymbol S_n=n^{-1/2}\sum_{i=1}^n\boldsymbol Y_i$, with
$n^{-1}\sum_i\mathbb E Y_{ij}^2=1$ for every $j$ and
$|Y_{ij}|\le B_n$, where $B_n\ge1$. A representative bound is
\begin{equation}\label{eq:intro-cck}
 \Delta_n(\mathcal R_p)
 \le C\left\{\frac{B_n^2\log^7(pn)}n\right\}^{1/6}.
\end{equation}
Under the same bounded-coordinate setting,
\citet{CCKK2022} improved the corresponding rate to
$C\{B_n^2\log^5(pn)/n\}^{1/4}$.
These results allow arbitrary correlations, including singular
covariance matrices. Their variance and tail conditions concern
individual coordinates, rather than the smallest eigenvalue of the
full covariance matrix. Faster rates are available with additional structure.
\citet{FangKoike2021} used Stein's method to obtain high-dimensional
rectangle bounds for nonsingular Gaussian targets.
For bounded coordinates and a correlation matrix whose smallest
eigenvalue is bounded below by a fixed positive constant,
\citet{CCKoike2023} proved the nearly optimal bound
\begin{equation}\label{eq:intro-cckoike}
 \Delta_n(\mathcal R_p)
 \le C\left\{\frac{B_n^2\log^3 p}n\right\}^{1/2}\log n,
\end{equation}
where the constant depends on that lower bound.
\citet{FKLZ2023} showed that rates of order $n^{-1/2}$ up to
logarithmic factors can also hold for a singular covariance matrix
under suitable coordinate tail bounds and when all three-coordinate
correlation submatrices are uniformly nondegenerate; their result is stated for lower orthants, or
multivariate Kolmogorov distance. Under a different structural
assumption, namely i.i.d.\ log-concave summands,
\citet{FangKoike2024Logconcave} obtained a rectangle bound
$C\{\log^{13}(pn)/n\}^{1/2}$ without restrictions on the covariance
matrix. Thus covariance nondegeneracy is a feature of particular
fast-rate results, rather than a universal requirement of the
rectangle CLT. Logarithmic dependence on ambient dimension and
dependence on an intrinsic spectral dimension are nevertheless
different objectives, adapted to different classes of statistics.

A related direction is to express statistical error bounds through an effective dimension rather than the number of coordinates. For example, for Gaussian observations, \citet{KoltchinskiiLounici2017Cov}
established sharp expectation and concentration bounds for the
operator-norm error of the sample covariance in terms of the effective rank
\begin{equation}\label{eq:intro-effective-rank}
 r(\Sigma)=\tr\Sigma/\|\Sigma\|.
\end{equation}
This effective rank can remain moderate when $p$ is very large and
many eigenvalues are small. Beyond Gaussian observations,
\citet{MendelsonZhivotovskiy2020} constructed robust covariance
estimators under $L_4$--$L_2$ norm equivalence, with error bounds
depending on effective rank rather than explicitly on the ambient
dimension. Bounds expressed through effective rank have become
central to the analysis of covariance operators.
For robust mean estimation, \citet{LugosiMendelson2019} obtained
sub-Gaussian deviation bounds under finite second moments,
expressed through the trace and operator norm of the covariance.
\citet{KoltchinskiiLounici2017Proj} developed concentration and
normal approximation results for spectral projectors in which
effective rank, spectral gaps, and variance parameters describe
the statistical complexity.
\citet{KoltchinskiiLofflerNickl2020} developed efficient estimators
of linear functionals of principal components under effective-rank
conditions.
\citet{Koltchinskii2025Functionals} established effective-rank
bounds for estimating smooth covariance functionals using
jackknife bias reduction.

For likelihood-based inference, \citet{Spokoiny2017} derived
finite-sample Fisher and Wilks expansions for quadratically
penalized likelihood in terms of an effective dimension.
Furthermore, for general covariance matrices, \citet{GNSU2019} obtained Gaussian comparison and anti-concentration bounds for balls in Hilbert space in terms of spectral characteristics. Their comparison bounds use the trace norm of the covariance difference, rather than the inverse-covariance quantities arising from Gaussian relative entropy. Such results emphasize the value of retaining the original Euclidean geometry when small-variance directions contribute little to the statistic of interest.

Related spectral notions of effective dimension also play a central
role in regression.
\citet{CaponnettoDeVito2007} used a regularized effective dimension
to obtain optimal convergence rates for kernel least-squares
estimators.
\citet{HsuKakadeZhang2012} analyzed random-design ridge regression
using regularized spectral sums to express prediction-error bounds.
\citet{BartlettLongLugosiTsigler2020} characterized benign
overfitting in linear regression through effective ranks of the
covariance spectrum and its tails.
\citet{ChengMontanari2024} obtained dimension-free approximations
for ridge-regression risk, including models with Hilbert-space
covariates and regimes close to interpolation.
\citet{Bach2024} analyzed double descent in linear regression with
random projections using spectral quantities and effective degrees
of freedom. The intrinsic dimensions in these regression problems
need not coincide with \eqref{eq:intro-effective-rank}; they depend
on the statistical problem and the regularization, when present. Together,
these developments motivate distributional approximations that
reflect covariance structure even when the number of coordinates
is very large.

\paragraph{Contributions} The present paper addresses this objective for the class
$$
\mathcal B_0=\bigl\{\{x\in\mathbb R^p:\|x\|^2\le z\}:z\ge0\bigr\}
$$ 
of centered Euclidean
balls. Since a common scalar rescaling does not change
$\Delta_n(\mathcal B_0)$, we normalize $\|\Sigma\|=1$, so that
$r=r(\Sigma)=\tr\Sigma$. Under conditions \textup{(A1)--(A3)} (see below) we prove
\begin{equation}\label{eq:intro-main}
\Delta_n(\mathcal B_0) 
 \le C\frac{r^{3/2}}n.
\end{equation}
The constant depends only on the fixed parameters in
\textup{(A1)--(A3)}, not on $p$ or individual small eigenvalues.
To our knowledge, this is the first bound under the stated
assumptions that combines the $n^{-1}$ rate with an effective-rank
factor and remains uniform at the spectral endpoint $v(\Sigma)=6$.
In particular, the bound can vanish when $r=o(n^{2/3})$ even if
$p$ grows much faster than $n$. We note that the bound\eqref{eq:intro-main} also holds for independent mean zero, square-integrable
random elements of a separable real Hilbert space, under the
operator versions of \textup{(A1)--(A3)} and with trace class
covariance $\Sigma$. Thus the ambient dimension may be infinite; see the details below.

The endpoint condition concerns the aggregate quantity
$v(\Sigma)=\tr(\Sigma^2)/\|\Sigma\|^2$; it is stronger than requiring
six positive eigenvalues. When $\Sigma$ is proportional to the
identity on its range, $v(\Sigma)$ equals its rank, so the theorem
includes the six-dimensional case. For a general spectrum, the
constant remains uniform as individual smaller eigenvalues tend
to zero, provided \textup{(A1)--(A3)} continue to hold with the same
parameters. The contribution is this uniform effective-rank
control, including $v=6$, rather than the existence of an
$n^{-1}$ rate in a fixed dimension.

Potential applications include confidence balls for vector means
and regularized estimators; score and likelihood-ratio statistics
that admit quadratic approximations; Gaussian and multiplier
bootstrap calibration of norm-based statistics; inference for
finite-dimensional approximations in kernel and functional models;
and squared errors of estimated principal components or spectral
projectors after linearization. Related settings are discussed in
\citet{Spokoiny2017,SpokoinyZhilova2015,GNSU2019,
KoltchinskiiLounici2017Proj,ShaoZhang2022}.
The $n^{-1}$ approximation term is particularly useful when one
seeks accuracy beyond a first-order $n^{-1/2}$ bound and the
effective dimension is moderate. In each application, the
covariance and moment assumptions must hold for the relevant
linearized sum, and nonlinear remainders and covariance estimation
errors require their own control.

\section{Main result}
\label{sec:gaussian}

We start from the set of assumptions. First note that if $\Sigma$ is singular, we may restrict the vectors to its range and write
$p$ for the dimension of that range. Indeed, $\sum_i\mathbb E(u^\top\boldsymbol X_i)^2=0$ for every vector $u \in \operatorname{ker}\Sigma$, so
$\boldsymbol X_i\in\operatorname{ran}\Sigma$ almost surely for every $i$.
This restriction preserves all Euclidean norms and the nonzero
spectral moments. We may therefore work with $\Sigma\succ0$ and
write $s_1\ge\cdots\ge s_p>0$ for its eigenvalues. Without loss of generality we may also assume that $\|\Sigma\| = 1$.  Denote 
\[
 r=\tr\Sigma,\qquad v=\tr(\Sigma^2).
\]

\paragraph{\textup{(A1)}}\assumptionlabel{A1}{ass:A1}
There exists a fixed constant $C_1\ge1$ such that each
$\Sigma_i=\Var(\boldsymbol{X}_i)$ fulfills
\[
 n\|\Sigma_i\|\le C_1,\qquad
 n\tr\Sigma_i\le C_1\tr\Sigma,\qquad i=1,\ldots,n.
\]

\paragraph{\textup{(A2)}}\assumptionlabel{A2}{ass:A2}
There is a fixed constant $K_4\ge1$ such that, for every real
vector $u$ and every $i=1,\ldots,n$,
\begin{equation}\label{eq:2.1}
 \mathbb E|u^\top\boldsymbol X_i|^4
       \le K_4(u^\top\Sigma_i u)^2.
\end{equation}

\paragraph{\textup{(A3)}}\assumptionlabel{A3}{ass:A3}
\begin{enumerate}
\item[(i)]\assumptionlabel{A3(i)}{ass:A3-i}
There exists a fixed constant $C_\Sigma\ge1$ such that
\[
 r\le C_\Sigma v.
\]
\item[(ii)]\assumptionlabel{A3(ii)}{ass:A3-ii}
$v\ge6$.
\end{enumerate}

Note that \textup{(A3)} and $\|\Sigma\| = 1$ imply 
\begin{equation}\label{eq:proof-spectral}
 6\le v\le r\le C_\Sigma v.
\end{equation}
The following theorem is the main result of this paper.
\begin{theorem}\label{thm:2.1}
Suppose that $\boldsymbol X_1,\ldots,\boldsymbol X_n$ are independent
mean zero random vectors in $\mathbb R^p$, and let
$\Sigma=\Var(\boldsymbol S_n)$ with $\|\Sigma\|=1$.
Let $\widetilde{\boldsymbol S}\sim\mathcal N(0,\Sigma)$.
Under \textup{(A1)--(A3)}, there is a constant $C$, depending only on
$C_1,K_4,C_\Sigma$, such that
\begin{equation}\label{eq:main-bound}
 \Delta_n(\mathcal B_0)
 \le C\frac{r^{3/2}}n.
\end{equation}
The constant is independent of $p,n$ and the individual eigenvalues,
and is uniform at $v=6$.
\end{theorem}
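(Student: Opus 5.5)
We follow the Götze--Zaitsev route: an Esseen-type smoothing inequality for the distribution function of $\|\boldsymbol S_n\|^2$, combined with control of the characteristic functions $\varphi(t)=\mathbb E e^{it\|\boldsymbol S_n\|^2}$ and $\widetilde\varphi(t)=\mathbb E e^{it\|\widetilde{\boldsymbol S}\|^2}$. First we may assume $r^{3/2}\le c\,n$ for a small $c$; otherwise the bound is trivial. By the Esseen inequality with cutoff $T\asymp n/r^{1/2}$ (in units where $\|\Sigma\|=1$), and since the density of $\|\widetilde{\boldsymbol S}\|^2$ is bounded by $C/\sqrt v$ (from $v\ge6$), we get
\[
\Delta_n(\mathcal B_0)\le C\int_{|t|\le T}\frac{|\varphi(t)-\widetilde\varphi(t)|}{|t|}\,dt+\frac{C}{T\sqrt v}.
\]
The last term is at most $Cr^{1/2}/(n\sqrt v)\le Cr^{3/2}/n$. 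We split the integral into a small-$t$ region $|t|\le t_0$ with $t_0\asymp r^{-1/2}$ and a large-$t$ region $t_0\le|t|\le T$.

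\emph{Small $t$.} We use a Lindeberg replacement of $\boldsymbol X_i$ by Gaussian $\boldsymbol Y_i\sim\mathcal N(0,\Sigma_i)$, expanding $e^{it\|x\|^2}$ to the fourth order around partial sums $\boldsymbol W_i$. The first and second order terms cancel by moment matching. The third order term has the form $t^2\,\mathbb E\,[(\boldsymbol W_i^\top X)(X^\top X)\cdots]$, and its leading part is odd in $\boldsymbol W_i$. Replacing $\boldsymbol W_i$ by its Gaussian counterpart, the odd term vanishes, and the error of this replacement gives another factor $n^{-1/2}$. What remains is of order $\sum_i\mathbb E\|\boldsymbol X_i\|^4\,(|t|+|t|^2 r+\ldots)$ times a damping factor $|\widetilde\varphi|$-like. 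Assumptions (A1) and (A2) give $\mathbb E\|\boldsymbol X_i\|^4\le K_4(\tr\Sigma_i)^2\le K_4C_1^2r^2/n^2$, so $\sum_i\mathbb E\|\boldsymbol X_i\|^4\lesssim r^2/n$. This is too big by a factor $r^{1/2}$ compared with $r^{3/2}$. We fix this by using the directional form: the relevant quantities are $\mathbb E(\boldsymbol X_i^\top\Sigma\boldsymbol X_i)\|\boldsymbol X_i\|^2\lesssim \tr(\Sigma_i)\tr(\Sigma\Sigma_i)$, together with $|t|\le r^{-1/2}$. This should produce $r^{3/2}/n$ after integration against $dt/|t|$.

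\emph{Large $t$.} Here we need $|\varphi(t)|$ and $|\widetilde\varphi(t)|$ to be small. For the Gaussian we have $|\widetilde\varphi(t)|=\prod_j(1+4t^2s_j^2)^{-1/4}$. On $|t|\ge t_0$ with $t_0\asymp r^{-1/2}$, this is $\lesssim(1+t^2)^{-v/4}\cdot$(const). Since $v\ge6$, this gives decay like $|t|^{-3}$ at least on the scale where $\sum s_j^2$ dominates. Uniformity at $v=6$ needs care: the product should be bounded by $\exp(-c\,\min(t^2v,\ldots))$ combined with $(1+t^2)^{-3/2}$, through the inequality $\prod(1+a_j)\ge 1+\sum a_j+\ldots$ applied to groups of eigenvalues with total squared mass $\ge1$. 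For $\varphi$ we use Götze--Zaitsev symmetrization. We write $\boldsymbol S_n$ as a sum of independent blocks, and we apply the decoupling inequality $|\varphi(t)|^2\le \mathbb E\exp\{2it\langle \boldsymbol U,\boldsymbol V\rangle\}$-type bounds, where $\boldsymbol U,\boldsymbol V$ are symmetrized sums over disjoint halves. This reduces the problem to the bilinear characteristic function. After truncating the $\boldsymbol X_i$ at level $\asymp\sqrt{r/n}$ (the truncation cost is controlled by the fourth moments), we compare each half with its Gaussian analogue in this bilinear form, keeping the covariance $\Sigma$ rather than whitening it. The resulting bound is $|\varphi(t)|\lesssim(1+t^2)^{-3/2}$-type plus $O(r^{3/2}/n)$ on $t_0\le|t|\le T$, with $T\asymp n/r^{1/2}$.

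\emph{Main obstacle.} The main difficulty is this large-$t$ estimate. We must show that the symmetrized characteristic function of the non-Gaussian sum inherits the Gaussian decay $\prod(1+t^2s_j^2)^{-1/4}$ up to $|t|\sim n/r^{1/2}$, using only (A1)--(A3). It must hold uniformly in small eigenvalues, and at $v=6$ without a logarithm. Our plan is to pick six (or more) "directions" by spectral grouping: partition the eigenvalues into at least six groups, each with squared mass of order one. We then apply the Götze--Zaitsev lattice/conditioning argument to the induced projections, so that the effective dimension entering their argument is exactly the number of groups, which is $\ge v$-controlled. (A3)(i) ensures that this grouping costs only constants.
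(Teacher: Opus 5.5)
Your Lindeberg step, with the odd cubic term killed by Gaussianizing the background, is the right mechanism and matches the paper's double replacement. The frequency split around it, however, cannot give the bound.

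\textbf{The middle frequencies.} On $t_0\le|t|\le T$ with $t_0\asymp r^{-1/2}$ you bound $|\varphi|$ and $|\widetilde\varphi|$ separately. But $\log(1+x)\le x$ gives $|\widetilde\varphi(t)|\ge e^{-t^2v}$, and $t_0\lesssim v^{-1/2}$ because $r\ge v$. So already $\int_{t_0}^{2t_0}|\widetilde\varphi(t)|\,dt/t\ge c>0$. Likewise, your target $|\varphi(t)|\lesssim(1+t^2)^{-3/2}+O(r^{3/2}/n)$ integrates against $dt/t$ to $O(1)+O(r^{3/2}n^{-1}\log n)$. Treating the two characteristic functions separately is affordable only beyond $\tau\asymp n^{1/3}/\sqrt v$. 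For a rank-six projection, $|\widetilde\varphi(t)|\asymp t^{-3}$ for $t\ge1$, so $\int_\tau^\infty|\widetilde\varphi|\,dt/t=O(1/n)$ forces $\tau\gtrsim n^{1/3}$.

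Hence the replacement comparison must run up to $\tau$, and there every Taylor remainder needs a damping factor for the shifted, non-Gaussian background. Producing that factor is the bulk of Proposition~\ref{prop:moderate}. It uses a balanced covariance partition of the background, the symmetrization bound of Lemma~\ref{lem:shifted-cf} for shifted characteristic functions, and an ``unused block'' device for derivatives. It also uses a third-moment-matching proxy $Z_i=A_i+G_i$ and a retained Gaussian component $N(0,\Sigma/2)$. The outcome is $|\mathfrak f-\widetilde{\mathfrak f}|\le CH(t)(r^2t^2/n+r^3t^3/n)$. Assumption (A3)(i) is what converts $\int H(t)\,t\,dt\lesssim v^{-1}$ and $\int H(t)\,t^2\,dt\lesssim v^{-3/2}$ into $r^{3/2}/n$.

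\textbf{The endpoint $v=6$.} Here the envelope is only $(1+t^2)^{-3/2}$, so the $t^3$ terms integrate to order $n^{-1}\log n$ over $[1,\tau]$. Removing this logarithm is the separate $6\le v<7$ argument of Appendix~\ref{sec:reserve6}. It needs the bound $s_6\ge(6C_\Sigma-5)^{-1}$, a second-order Bergstr\"om expansion with eighth-derivative bounds, and covariance contractions in the sixth-derivative term. Your remark that uniformity at $v=6$ ``needs care'' does not locate this divergence.

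\textbf{The high frequencies.} A pointwise bound $|\varphi(t)|\lesssim(1+t^2)^{-3/2}+O(1/n)$ up to $T$ is false for lattice summands. Take Rademacher coordinates in $\mathbb R^6$, which satisfy (A1)--(A3). Then $\|\boldsymbol S_n\|^2\in n^{-1}\mathbb Z$, and Gauss sums give $|\varphi(2\pi n/q)|\approx q^{-3}$ for odd $q$. This exceeds both of your terms when $q\ll n^{1/3}$. Only an integral bound over $[\tau,T]$ holds. It cannot come from comparing the bilinear form with its continuous Gaussian analogue, which has no such peaks. Nor can it come from grouping eigenvalues, which produces no integer structure.

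The paper instead conditions on the symmetrized increments and averages over signs. It selects frames of six actual increments on each side whose cross Gram matrix has singular values $\asymp\sqrt v$ (Appendix~\ref{sec:appendixB}). It then uses two H\"older steps to reduce to kernels $\mathbb E e^{iuR^\top AR'}$ in integer sign sums with fixed $6\times6$ matrices $A$. These kernels are integrated with the G\"otze--Margulis lattice-rotation estimate (Proposition~\ref{prop:high-frequency}).

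\textbf{Truncation.} Truncating at $\sqrt{r/n}$, the typical size of $\boldsymbol X_i$, is not cheap: $\mathbb P(\|\boldsymbol X_i\|>\sqrt{r/n})$ can be of order one. The fourth moments only give $\sum_i\mathbb P(\|\boldsymbol X_i\|>\sqrt r)\le K_4C_1^2/n$. So the level must be $\sqrt r$, followed by recentering and a covariance comparison as in Lemma~\ref{lem:new-truncation}.
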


The proof in Section~\ref{sec:proofs} treats $v\ge7$ and $6\le v<7$
separately, after a common truncation reduction. In the first range,
the moderate-frequency comparison has a fixed integrability margin.
In the second range, \textup{(A3)(i)} bounds $r$, and the sixth eigenvalue
is bounded below in terms of $C_\Sigma$; covariance contractions and
a second-order replacement then give the same $n^{-1}$ rate.
For an unnormalized covariance the result follows by
scalar rescaling, with $r=\tr\Sigma/\|\Sigma\|$ and
$v=\tr(\Sigma^2)/\|\Sigma\|^2$.

\paragraph{Discussion}
Assumption \textup{(A1)} controls the contribution of each increment
and permits a bound in terms of $n$ and the total covariance.
Assumption \textup{(A2)} is a uniform $L_4$--$L_2$ norm equivalence
(also called the bounded kurtosis condition)
for the one-dimensional marginals:
\[
 \|u^\top\boldsymbol X_i\|_{L_2}
 \le \|u^\top\boldsymbol X_i\|_{L_4}
 \le K_4^{1/4}\|u^\top\boldsymbol X_i\|_{L_2},
 \qquad u\in\mathbb R^p,\quad i=1,\ldots,n,
\]
where $\|Y\|_{L_q}=(\mathbb E|Y|^q)^{1/q}$. The same constant is
required for every direction and increment, so this is stronger
than merely requiring finite fourth moments. This condition is
used by \citet{MendelsonZhivotovskiy2020} in robust covariance
estimation with effective-rank bounds. A broad class of examples
is provided by centered random vectors with log-concave densities:
the moment comparison of \citet[Theorem~3]{Bobkov2001Khinchin},
applied to linear functions, gives \textup{(A2)} with a numerical
constant independent of dimension. Gaussian vectors satisfy
\textup{(A2)} with $K_4=3$. 
The ratio
\[
 \frac{\tr(\Sigma^2)}{\tr\Sigma}
 =\sum_{j=1}^p\frac{s_j}{\sum_\ell s_\ell}\,s_j
\]
is a weighted average of the eigenvalues. The comparison $r\le C_\Sigma v$
in \textup{(A3)(i)} bounds this average below, while allowing arbitrarily
small individual eigenvalues. This comparison cannot simply be omitted. 
The following proposition gives an example of the array such that
$r_n/v_n\to\infty$ and $r_n^{3/2}/n\to0$, but its approximation error
does not vanish. 

\begin{proposition}
\label{prop:ce-without-A3}
For every fixed integer $k\ge1$, there exists a triangular array,
identically distributed within each row, satisfying \textup{(A1)} and
\textup{(A2)} with $C_1=1$ and $K_4=6$, such that
\[
 \|\Sigma_n\|=1,\qquad r_n=k+\sqrt n,\qquad v_n=k+\frac1n,
\]
and
\begin{equation}\label{eq:ce-A3-conclusion}
 \frac{r_n}{v_n}\to \infty,\qquad
 \frac{r_n^{3/2}}n\to 0,\qquad
 \liminf_{n\to\infty}\Delta_n(\mathcal B_0)\ge c_k>0.
\end{equation}
Here $c_k=\mathbb E\Phi(-Y_k)$, where $Y_k\sim\chi_k^2$ and $\Phi$ is
the standard normal distribution function. 
\end{proposition}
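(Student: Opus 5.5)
Construct the array explicitly. In each row take $\boldsymbol X_i = n^{-1/2}\boldsymbol Y_i$, with $\boldsymbol Y_1,\dots,\boldsymbol Y_n$ i.i.d. in $\mathbb R^{k+m}$ and $m=m_n=n^{3/2}$; one may round $m$ to an integer, which changes $r_n$ and $v_n$ only in lower-order terms. Split coordinates into a block of size $k$ and a block of size $m$. On the first block put $\boldsymbol Y^{(1)}\sim\mathcal N(0,I_k)$. On the second block put a Gaussian vector with covariance $\varepsilon I_m$, where $\varepsilon=n^{-1}$, hence $\varepsilon m=\sqrt n$, multiplied by a common degenerate scalar. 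More precisely, take a Rademacher-type variable that lets the $\sqrt n$ of mass in the small directions fail to concentrate in the sum. The simplest version is $\boldsymbol Y^{(2)} = \xi\, \boldsymbol W$ with $\boldsymbol W\sim \mathcal N(0,\varepsilon I_m)$ independent of $\xi$, where $\xi$ has $\mathbb E\xi^2=1$ and $\mathbb E\xi^4\le 2$. Then $\Sigma_n=\diag(I_k,\varepsilon I_m)$, so $\|\Sigma_n\|=1$, $r_n=k+\varepsilon m=k+\sqrt n$ and $v_n=k+\varepsilon^2 m=k+n^{-1/2}$.

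To get exactly $v_n=k+1/n$ as stated, use $\varepsilon=n^{-3/2}$ and $m=n^2$. Then $\varepsilon m=\sqrt n$ and $\varepsilon^2m=1/n$. The limits $r_n/v_n\to\infty$ and $r_n^{3/2}/n\sim n^{-1/4}\to0$ follow immediately. Assumption (A1) holds with $C_1=1$ by identical distribution: $n\Sigma_i=\Sigma_n$ and $n\tr\Sigma_i=\tr\Sigma_n$. For (A2), a mixture of centered Gaussians $u^\top \boldsymbol Y$ conditional on $\xi$ gives $\mathbb E(u^\top\boldsymbol Y)^4=3\,\mathbb E(a+\xi^2 b)^2\le 3\,\mathbb E\xi^4 (a+b)^2$, with $a=\|u_1\|^2$ and $b=\varepsilon\|u_2\|^2$. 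With $\mathbb E\xi^4=2$, for instance $\xi^2\in\{0,2\}$ with probability $1/2$ each, this gives $K_4=6$, matching the statement.

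For the lower bound on $\Delta_n$ I would compare $\|\boldsymbol S_n\|^2=\|\boldsymbol S^{(1)}\|^2+\|\boldsymbol S^{(2)}\|^2$ with its Gaussian counterpart. The first block is exactly Gaussian, $\|\boldsymbol S^{(1)}\|^2\sim\chi^2_k$, and it is independent of the second block. Conditionally on $\xi_1,\dots,\xi_n$, $\boldsymbol S^{(2)}\sim\mathcal N(0,\varepsilon T I_m)$ with $T=n^{-1}\sum\xi_i^2$. Hence $\|\boldsymbol S^{(2)}\|^2=\varepsilon T\chi^2_m$, which concentrates at $\varepsilon m T=\sqrt n\,T$ with fluctuation of order $\varepsilon\sqrt m=n^{-1/2}$, negligible. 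Meanwhile $\sqrt n(T-1)\to \mathcal N(0,1)$ because $\Var(\xi^2)=1$. So $\|\boldsymbol S_n\|^2-\sqrt n\approx Y_k+\sqrt n(T-1)\Rightarrow Y_k+Z$ with $Z\sim\mathcal N(0,1)$ independent of $Y_k$. The Gaussian target instead gives $\|\widetilde{\boldsymbol S}\|^2-\sqrt n\approx Y_k+o(1)$.

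Evaluate both at $z=\sqrt n$. The probabilities tend to $\mathbb P(Y_k+Z\le0)=\mathbb E\Phi(-Y_k)=c_k$ and to $\mathbb P(Y_k\le 0)=0$. Thus $\liminf\Delta_n\ge c_k$. The main care is in justifying convergence of distribution functions at the point $0$. This follows because both limit laws are continuous there: $Y_k+Z$ has a density, and for the Gaussian side $\mathbb P(Y_k\le\delta)\to0$ as $\delta\to0$. One then adds Slutsky-type control of the $n^{-1/2}$ chi-square fluctuation, via Chebyshev's inequality, and the CLT for $T$.
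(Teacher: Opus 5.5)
Your construction and argument are correct and essentially the paper's: the same covariance $\diag(I_k,n^{-3/2}I_{n^2})$, the same scale variable with $\xi^2\in\{0,2\}$, evaluation at the squared radius $z=\sqrt n$, the CLT for $\sqrt n(T-1)$, and concentration of the $\chi^2_{n^2}$ term. The only difference is cosmetic: you apply the random scale to the small-variance block alone, so $\|\boldsymbol S_n\|^2$ splits additively into an exact independent $\chi^2_k$ plus $\varepsilon T\chi^2_{m}$. The paper instead scales the whole vector, writes $\|\boldsymbol S_n\|^2\overset{d}=T_nQ_n$, and applies Slutsky to $T_n(Q_n-\sqrt n)+\sqrt n(T_n-1)$; both versions give (A2) with $K_4=6$, the paper's with equality.
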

Note that this construction satisfies \textup{(A3)(ii)} whenever $k\ge6$.
Its proof is given in Appendix~\ref{sec:ce-without-A3}.

There is a question if \textup{(A3)(ii)} could be omitted. The following example demonstrates that one can't take $v \leq 4$.
For i.i.d.\ vectors with nonsingular covariance,
\citet{GoetzeZaitsev2014} obtained the rate $1/n$ in fixed dimension at least
five, with constants involving smallest eigenvalue of $\Sigma$. It is still unclear if one can prove Theorem 2.1 in the case $v \geq 5$ without additional constraints on the smallest eigenvalues. \citet{NagaevChebotarev2007} conjectured that $v \geq 6$ is the necessary assumption. 

\begin{proposition}
\label{prop:ce-v4}
Let $\boldsymbol\xi_i=(\varepsilon_{i1},\ldots,\varepsilon_{i4})$, where
all $\varepsilon_{ij}$ are independent Rademacher variables, and put
\[
 \boldsymbol X_{n,i}=\frac{\boldsymbol\xi_i}{\sqrt n},\qquad
 \boldsymbol S_n=\frac1{\sqrt n}\sum_{i=1}^n\boldsymbol\xi_i.
\]
Then \textup{(A1), (A2), (A3)(i)}
hold with $C_1=C_\Sigma=1$ and $K_4=3$. Here $\Sigma_n=I_4$, so
$r_n=v_n=4$ and \textup{(A3)(ii)} fails. Nevertheless,
\begin{equation}\label{eq:ce-v4-conclusion}
 \limsup_{n\to\infty}n\Delta_n(\mathcal B_0)=\infty.
\end{equation}
\end{proposition}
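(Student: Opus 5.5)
Verifying the assumptions is immediate. Each $\boldsymbol X_{n,i}$ has covariance $\Sigma_i=I_4/n$, so $n\|\Sigma_i\|=1$ and $n\tr\Sigma_i=4=\tr\Sigma_n$. This gives (A1) with $C_1=1$, and $\Sigma_n=I_4$, $r_n=v_n=4$, so (A3)(i) holds with $C_\Sigma=1$. For (A2), expand $\mathbb E(u^\top\boldsymbol\xi)^4=3\|u\|^4-2\sum_j u_j^4\le 3\|u\|^4$ using Rademacher moments; scaling by $n^{-2}$ gives $K_4=3$.

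The heart of the proof is a lattice effect. The vector $\sum_i\boldsymbol\xi_i$ lies in $\mathbb Z^4$, so $n\|\boldsymbol S_n\|^2=\|\sum_i\boldsymbol\xi_i\|^2$ takes only integer values. Moreover each coordinate $T_j=\sum_i\varepsilon_{ij}$ has the parity of $n$. For definiteness take $n$ even. Then $T_j\in2\mathbb Z$ and $\|T\|^2\in 4\mathbb Z$. Hence the distribution function $F_n(z)=\mathbb P\{\|\boldsymbol S_n\|^2\le z\}$ is constant on the intervals $[4m/n,4(m+1)/n)$. The Gaussian limit $\|\widetilde{\boldsymbol S}\|^2\sim\chi^2_4$ has continuous density $g(x)=\tfrac14 xe^{-x/2}$. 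Across the interval $(4m/n,4(m+1)/n)$ the function $F_n$ stays fixed while the Gaussian distribution function increases by $\int_{4m/n}^{4(m+1)/n}g\approx 4g(x)/n$ with $x\approx4m/n$. Comparing at both endpoints of the interval therefore gives
\[
\Delta_n(\mathcal B_0)\ge \tfrac12\Bigl(\mathbb P\{\chi^2_4\le 4(m+1)/n\}-\mathbb P\{\chi^2_4\le 4m/n\}\Bigr)\ge \frac{c}{n}
\]
for $m$ with $4m/n\approx 2$, where $g$ is bounded below. This only yields $n\Delta_n\ge c$, which is not enough.

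To reach $\limsup n\Delta_n=\infty$, I would use the jump of $F_n$ at a single lattice point instead. The jump at $z=4m/n$ equals $\mathbb P\{\|T\|^2=4m\}$. Write $T=2W$; then this probability is $\mathbb P\{\|W\|^2=m\}$. By the local limit theorem for the lattice walk $W$, which has covariance $(n/4)I_4$ and lives on a sublattice after a parity restriction, one gets
\[
\mathbb P\{\|W\|^2=m\}\approx \sum_{w\in\mathbb Z^4,\ \|w\|^2=m,\ \text{parity ok}}\frac{c'}{n^2}\, e^{-2m/n}.
\]
The number of lattice points in $\mathbb Z^4$ with $\|w\|^2=m$ is $r_4(m)=8\sigma(m)$ for odd $m$, by Jacobi's four-square theorem. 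Parity constraints select a fixed fraction of these points. Choose $m\asymp n$ with $\sigma(m)/m$ large, for instance $m$ divisible by many small primes, so that $\sigma(m)/m\gtrsim\log\log m$ (Gronwall). Then the jump is of order $\sigma(m)/n^2\cdot e^{-2m/n}\gtrsim (\log\log n)/n$. Any jump $J$ of $F_n$ forces $\Delta_n\ge J/2$, because the Gaussian distribution function is continuous. Hence $n\Delta_n\gtrsim\log\log n\to\infty$ along a suitable subsequence of $n$.

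The main obstacle is justifying the local limit approximation uniformly on the sphere $\{\|w\|^2=m\}$. The parity structure matters here, since the support of $W$ is a coset of a sublattice. The cleanest route is to choose $m=n/2\cdot$const exactly and restrict to $\|w\|\le C\sqrt n$. There the multivariate local CLT for the independent coordinates holds with uniform relative error $o(1)$, and it factorizes into one-dimensional binomial local limits, $\mathbb P\{T_j=t\}\sim 2(2\pi n)^{-1/2}e^{-t^2/2n}$ for $t\equiv n \pmod 2$. One must also check that the chosen $m$ with large $\sigma(m)$ is compatible with the parity condition. Taking $n$ even and counting all $w\in\mathbb Z^4$ (no further restriction on $W$) avoids the issue, and choosing $m$ even with $\sigma(m_{\mathrm{odd}})$ large still gives $r_4(m)\ge 24\sigma(m_{\mathrm{odd}})$.
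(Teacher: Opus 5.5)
Your proposal is correct and follows essentially the same route as the paper. With $n=2m$, the atom of $\|\boldsymbol S_n\|^2$ at $2$ is bounded below by $c\,r_4(m)/m^2$ using a uniform local lower bound for the binomial coordinates, Jacobi's formula gives $r_4(m)=8\sigma_1(m)$ for odd $m$, a jump of size $J$ forces $\Delta_n\ge J/2$, and choosing $m$ with $\sigma_1(m)/m\to\infty$ finishes the argument. The differences are minor: you cite the de Moivre--Laplace local limit theorem and Gronwall's theorem where the paper uses an elementary Stirling ratio bound and the explicit choice $m=\prod_{j\le M}(2j-1)$. Also, the interval-gap warm-up is unnecessary, and the parity worry is moot, since for even $n$ each $W_j=T_j/2$ takes every integer value in $[-n/2,n/2]$.
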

The proof of Proposition~\ref{prop:ce-v4} is given in
Appendix~\ref{sec:ce-v4}.

\paragraph{Hilbert space extension}
The bound of Theorem \ref{thm:2.1} also holds for independent mean zero, square-integrable
random elements of a separable real Hilbert space, under the
operator versions of \textup{(A1)--(A3)} and with trace class
covariance $\Sigma$. Thus the ambient dimension may be infinite.
Here and below the effective spectral quantities must remain
finite. For completeness, the extension follows from
Theorem~\ref{thm:2.1} by the following approximation, which also
covers the endpoint $v=6$.

Normalize $\|\Sigma\|=1$, let $P_N$ project onto its first $N$
eigenvectors with eigenvalues $s_j$, and put
\[
 r_N=\sum_{j\le N}s_j,\qquad v_N=\sum_{j\le N}s_j^2,\qquad
 t_N=\sqrt{(6-v_N)_+}.
\]
Here $(a)_+=\max(a,0)$.
For independent standard normal variables $g_i$, independent of
the original random elements, define
\[
 \boldsymbol X_i^{(N)}
   =\bigl(P_N\boldsymbol X_i,\sqrt{t_N/n}\,g_i\bigr).
\]
The total covariance has eigenvalues $s_1,\ldots,s_N,t_N$.
For sufficiently large $N$, its operator norm is one,
its trace is $r_N+t_N\to r$, and its squared spectral mass is
$\max(v_N,6)\ge6$. Assumptions \textup{(A1)--(A3)} hold with
constants $2C_1$, $\max(K_4,3)$, and $2C_\Sigma$, uniformly in $N$.
For the moment condition, independence gives
$\mathbb E(U+\sigma g)^4
=\mathbb EU^4+6\sigma^2\mathbb EU^2+3\sigma^4
\le\max(K_4,3)(\mathbb EU^2+\sigma^2)^2$.
The finite-dimensional bound therefore applies. The approximating
sums and Gaussian elements can be coupled so that their squared
norms converge in probability to their Hilbert space counterparts. Passing to
distribution functions at $z\pm\eta$, then letting $N\to\infty$
and $\eta\downarrow0$, gives \eqref{eq:intro-main}, since the
Gaussian squared norm has a continuous distribution. Finite rank
covariance is covered directly by restriction to its range.

\section{Proof of Theorem~\texorpdfstring{\ref{thm:2.1}}{2.1}}\label{sec:proofs}
We first truncate and center the increments, and compare the resulting
Gaussian covariance with the original one. This reduction is common
to both spectral regimes. Section~\ref{sec:case-v-ge7} proves the
theorem for $v\ge7$, using a bounded-array estimate with the fixed
margin $v\ge13/2$ after truncation. Section~\ref{sec:case-v-near6}
treats $6\le v<7$, including the possible loss of order $n^{-1}$ in
$v$ caused by truncation. Its additional comparison estimates are
proved in Appendix~\ref{sec:reserve6}.

Put
\begin{gather*}
 r=\tr\Sigma,\qquad v=\tr(\Sigma^2),\qquad
 \mathfrak f(t)=\mathbb E e^{it\|\boldsymbol S_n\|^2/2},\qquad
 \widetilde{\mathfrak f}(t)
       =\mathbb E e^{it\|\widetilde{\boldsymbol S}\|^2/2},\\
 D(t)=|\widetilde{\mathfrak f}(t)|
      =\prod_{j=1}^p(1+t^2s_j^2)^{-1/4}.
\end{gather*}

Constants depend only on $C_1,K_4,C_\Sigma$ and, for bounded arrays,
the fixed norm bound $L_0$. They do not depend on a gap $v-6$.
We use Esseen smoothing in the absolute-value form of
Lemma~\ref{lem:gaussian-smoothing}. The improvement at $v=6$ comes
from sharper comparison remainders, not from cancellation in a
signed smoothing kernel.

\subsection{Truncation and Gaussian comparisons}
\begin{lemma}
\label{lem:new-truncation}
Set $E_i=\{\|\boldsymbol X_i\|>\sqrt r\}$ and define
\[
 \mu_i=\mathbb E[\boldsymbol X_i\mathbf1_{E_i^c}],\qquad
 Y_i=\boldsymbol X_i\mathbf1_{E_i^c}-\mu_i,\qquad
 T_i=\Cov(Y_i),\qquad \mu=\sum_i\mu_i.
\]
Put $\epsilon_n=2K_4C_1/n$. For $n$ sufficiently large,
\begin{align}
 \sum_i\mathbb P(E_i)&\le K_4C_1^2/n,
 &\|Y_i\|&\le2\sqrt r,\label{new:trunc-tail}\\
 (1-\epsilon_n)\Sigma_i&\preceq T_i\preceq\Sigma_i,
 &\|\Sigma^{-1/2}\mu\|&\le K_4C_1^{3/2}/n.
 \label{new:trunc-cov}
\end{align}
Moreover,
\begin{equation}\label{new:trunc-fourth}
 \mathbb E|u^\top Y_i|^4
 \le16K_4(1-\epsilon_n)^{-2}(u^\top T_i u)^2.
\end{equation}
\end{lemma}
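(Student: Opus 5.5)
Each claim reduces to a Chebyshev or Cauchy--Schwarz estimate driven by \textup{(A2)} and \textup{(A1)}. The starting point is a bound on $\mathbb E\|\boldsymbol X_i\|^4$. Let $e_j$ be an orthonormal eigenbasis of $\Sigma_i$ with eigenvalues $\sigma_{ij}$. Minkowski's inequality in $L_2$ applied to $\|\boldsymbol X_i\|^2=\sum_j (e_j^\top \boldsymbol X_i)^2$, together with \textup{(A2)}, gives
\[
\bigl(\mathbb E\|\boldsymbol X_i\|^4\bigr)^{1/2}\le \sum_j \bigl(\mathbb E(e_j^\top\boldsymbol X_i)^4\bigr)^{1/2}\le K_4^{1/2}\tr\Sigma_i .
\]
Hence $\mathbb E\|\boldsymbol X_i\|^4\le K_4(\tr\Sigma_i)^2\le K_4C_1^2 r^2/n^2$ by \textup{(A1)}.

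Markov's inequality at level $r^2$ then gives $\mathbb P(E_i)\le K_4C_1^2/n^2$, and summing over $i$ yields the first claim in \eqref{new:trunc-tail}. Next, $\|\mu_i\|=\|\mathbb E[\boldsymbol X_i\mathbf 1_{E_i}]\|\le (\mathbb E\|\boldsymbol X_i\|^2)^{1/2}\mathbb P(E_i)^{1/2}$, which is at most $\sqrt r$ for large $n$. This gives $\|Y_i\|\le 2\sqrt r$.

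For the covariance sandwich \eqref{new:trunc-cov}, write $W_i=\boldsymbol X_i\mathbf 1_{E_i^c}$ and fix $u$. Then
\[
u^\top T_iu=\mathbb E(u^\top W_i)^2-(u^\top\mu_i)^2\le \mathbb E(u^\top\boldsymbol X_i)^2=u^\top\Sigma_iu .
\]
For the lower bound, we have $u^\top\Sigma_iu-u^\top T_iu=\mathbb E[(u^\top \boldsymbol X_i)^2\mathbf 1_{E_i}]+(u^\top\mu_i)^2$. Cauchy--Schwarz and \textup{(A2)} give
\[
\mathbb E[(u^\top \boldsymbol X_i)^2\mathbf 1_{E_i}]\le K_4^{1/2}(u^\top\Sigma_iu)\,\mathbb P(E_i)^{1/2}.
\]
Since $\mathbb E[\boldsymbol X_i]=0$, we also have $u^\top\mu_i=-\mathbb E[u^\top\boldsymbol X_i\mathbf 1_{E_i}]$, so $(u^\top\mu_i)^2\le (u^\top\Sigma_iu)\,\mathbb P(E_i)$. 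With $\mathbb P(E_i)^{1/2}\le K_4^{1/2}C_1/n$, the total deficit is at most $(K_4C_1/n+K_4C_1^2/n^2)\,u^\top\Sigma_iu\le \epsilon_n u^\top\Sigma_iu$ for large $n$.

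For the bound on $\mu$, take $u=\Sigma^{-1/2}w$ with $\|w\|=1$. Then $w^\top\Sigma^{-1/2}\mu=-\sum_i\mathbb E[u^\top\boldsymbol X_i\mathbf 1_{E_i}]$. A sharper Hölder estimate with exponents $4$ and $4/3$ gives
\[
|\mathbb E[u^\top\boldsymbol X_i\mathbf 1_{E_i}]|\le K_4^{1/4}(u^\top\Sigma_iu)^{1/2}\,\mathbb P(E_i)^{3/4}\le K_4^{1/4}(u^\top\Sigma_iu)^{1/2}\,K_4^{3/4}C_1^{3/2}n^{-3/2}.
\]
Summing over $i$ and applying Cauchy--Schwarz, $\sum_i(u^\top\Sigma_iu)^{1/2}\le \sqrt n\,(u^\top\Sigma u)^{1/2}=\sqrt n$. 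This gives $|w^\top\Sigma^{-1/2}\mu|\le K_4C_1^{3/2}/n$, exactly the stated constant, which confirms that this is the intended route.

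Finally, for \eqref{new:trunc-fourth}: $(a-b)^4\le 8(a^4+b^4)$, $\mathbb E(u^\top W_i)^4\le \mathbb E(u^\top\boldsymbol X_i)^4$, and Jensen gives $(u^\top\mu_i)^4\le\mathbb E(u^\top W_i)^4$. Together these yield $\mathbb E|u^\top Y_i|^4\le 16K_4(u^\top\Sigma_iu)^2$. Substituting $u^\top\Sigma_iu\le (1-\epsilon_n)^{-1}u^\top T_iu$ from the sandwich completes the proof. No step is a real obstacle. The only care needed is bookkeeping of the constants and of which powers of $\mathbb P(E_i)$ to use. In particular, the $\mu$ bound needs the $L_4$ Hölder estimate rather than the $L_2$ one to reach the rate $1/n$ with constant $K_4C_1^{3/2}$.
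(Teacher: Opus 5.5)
Your proof is correct and follows the paper's argument essentially step by step: Minkowski for $\mathbb E\|\boldsymbol X_i\|^4$, Markov for the tail, Cauchy--Schwarz for the covariance deficit, the $L_4$--$L_{4/3}$ H\"older bound with Cauchy--Schwarz over $i$ and duality for $\mu$, and the factor $16$ in the fourth-moment bound. The only cosmetic difference concerns two steps where you invoke large $n$. The paper gets $\|\mu_i\|\le\sqrt r$ directly from $\|\boldsymbol X_i\mathbf 1_{E_i^c}\|\le\sqrt r$, and it bounds $(u^\top\mu_i)^2$ by the same tail expectation $\mathbb E[(u^\top\boldsymbol X_i)^2\mathbf 1_{E_i}]$, so neither step needs $n$ large.
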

\begin{proof}
For any real matrix $M$, Minkowski's inequality applied to squared
row projections gives
\begin{equation}\label{new:matrix-fourth}
 \mathbb E\|M\boldsymbol X_i\|^4
 \le K_4\{\tr(M\Sigma_iM^\top)\}^2.
\end{equation}
In particular,
\[
 \mathbb E\|\boldsymbol X_i\|^4
 \le K_4(\tr\Sigma_i)^2\le K_4C_1^2r^2/n^2.
\]
Consequently $p_i:=\mathbb P(E_i)\le K_4C_1^2/n^2$.
Since $\|\mu_i\|\le\sqrt r$, the norm bound also follows.
Centering of $\boldsymbol X_i$ gives
$\mu_i=-\mathbb E[\boldsymbol X_i\mathbf1_{E_i}]$ and
\[
 \Sigma_i-T_i
 =\mathbb E[\boldsymbol X_i\boldsymbol X_i^\top\mathbf1_{E_i}]
     +\mu_i\mu_i^\top.
\]
For every real $u$, Cauchy--Schwarz yields
\[
 \mathbb E[(u^\top\boldsymbol X_i)^2\mathbf1_{E_i}]
 \le\sqrt{K_4p_i}\,u^\top\Sigma_i u
 \le\frac{K_4C_1}{n}u^\top\Sigma_i u.
\]
The square $(u^\top\mu_i)^2$ is bounded by the same expectation.
This proves the covariance inequalities. For the mean, H\"older's
inequality followed by Cauchy--Schwarz over the index $i$ gives
\begin{align*}
 |u^\top\mu|
 &\le K_4^{1/4}\sum_i(u^\top\Sigma_i u)^{1/2}p_i^{3/4}\\
 &\le\frac{K_4C_1^{3/2}}{n^{3/2}}
             \sum_i(u^\top\Sigma_i u)^{1/2}
 \le\frac{K_4C_1^{3/2}}n(u^\top\Sigma u)^{1/2}.
\end{align*}
Duality proves the mean bound in \eqref{new:trunc-cov}.
This argument uses the sum of the directional variances, and does
not require $\Sigma_i\preceq C\Sigma/n$.
Finally, Minkowski's and Jensen's inequalities imply
$\mathbb E|u^\top Y_i|^4\le16\mathbb E|u^\top\boldsymbol X_i|^4$;
the lower covariance comparison gives \eqref{new:trunc-fourth}.
\end{proof}

The squared-norm density estimate in the next lemma follows from
\citet[Theorem~1]{BobkovNaumovUlyanov2021}, which gives two-sided
bounds for the maximum density of a weighted sum of chi-square
variables; see also \citet[Theorem~2.6]{GNSU2019} for the upper bound.
We retain a short argument converting the squared-norm density
bound into the norm-density bound needed to transfer the centering
error of the truncated, non-Gaussian sum. For the covariance
comparison, we use \citet[Corollary~2.3]{GNSU2019}.
\begin{lemma}
\label{lem:undo-truncation}
Let $G\sim N(0,\Sigma)$ with $\|\Sigma\|=1$ and
$v=\tr(\Sigma^2)\ge11/2$. Write $r=\tr\Sigma$.
The law of $\|G\|$ has density $f_R$ satisfying
\begin{equation}\label{eq:gaussian-norm-density}
 \|f_R\|_\infty\le C\sqrt{r/v}.
\end{equation}
If $(1-\epsilon)\Sigma\preceq\Sigma'\preceq\Sigma$,
$0\le\epsilon\le1/2$, and $G'\sim N(0,\Sigma')$, then
\begin{equation}\label{eq:gaussian-relative-covariance}
 \sup_{R\ge0}\left|\mathbb P\{\|G'\|\le R\}
                   -\mathbb P\{\|G\|\le R\}\right|
 \le C\epsilon r/\sqrt v.
\end{equation}
The constants are numerical. In addition, the density $g$ of
$\|G\|^2/2$ satisfies $\|g\|_\infty\le C/\sqrt v$.
\end{lemma}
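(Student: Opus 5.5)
The lemma has three claims: a bound on the density of $\|G\|^2/2$, a bound on the density of $\|G\|$, and a covariance comparison. I would prove them in that order, each from the previous one or from the cited results.

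\textbf{Density of $\|G\|^2/2$.} Write $\|G\|^2=\sum_j s_j\eta_j^2$ with $\eta_j$ i.i.d.\ standard normal. By \citet[Theorem~1]{BobkovNaumovUlyanov2021}, the maximal density of such a weighted chi-square sum is bounded, up to a numerical constant, by $(\sum_j s_j^2)^{-1/2}$ once at least a few weights are non-negligible. If I want to avoid relying on the precise form of that theorem, Fourier inversion gives the same bound directly:
\[
\|g\|_\infty\le\frac{1}{2\pi}\int_{\mathbb R}\prod_j(1+t^2s_j^2)^{-1/4}\,dt .
\]
Since $s_j\le1$, we have $\log(1+t^2s_j^2)\ge s_j^2\log(1+t^2)$. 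Hence the integrand is at most $(1+t^2)^{-v/4}$. This is integrable because $v/4\ge 11/8>1/2$. For $|t|\le1$ the estimate $(1+t^2s_j^2)^{-1/4}\le e^{-ct^2s_j^2}$ makes the integral over $[-1,1]$ of order $v^{-1/2}$. The tail $|t|>1$ contributes $O(2^{-v/4})$, which is also $O(v^{-1/2})$. This gives $\|g\|_\infty\le C/\sqrt v$.

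\textbf{Density of $\|G\|$.} Put $Q=\|G\|^2$, whose density is $q(x)=g(x/2)/2$, so $\|q\|_\infty\le C/\sqrt v$. The density of $R=\sqrt Q$ is $f_R(\rho)=2\rho\, q(\rho^2)$. For $\rho\le C'\sqrt r$ this gives $f_R(\rho)\le C\sqrt{r/v}$ directly. For large $\rho$ I need tail decay of $q$. I would write $\rho q(\rho^2)\le \sqrt{\mathbb E Q^{1/2}\text{-type}}$ bounds via the following splitting. Let $Q=Q_1+Q_2$, where $Q_1$ collects a block of eigenvalues carrying squared mass $\ge 11/4$. Its density is bounded by $C/\sqrt v$, uniformly in the conditioning on $Q_2$. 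Then
\[
q(x)=\mathbb E\, q_1(x-Q_2)\le \frac{C}{\sqrt v}\,\mathbb P\{Q_2\le x\}\ldots
\]
which is not enough by itself. Instead I would use exponential concentration: $Q$ is sub-exponential around $r$ with scale $\sqrt v$, and $q(x)\le C v^{-1/2}e^{-c(x-r)/1}$ for $x\ge 2r$. This follows by tilting: $q(x)= e^{-\lambda x}\mathbb E[e^{\lambda Q}]\,q_\lambda(x)$ with $\lambda=1/4$. The tilted law is again a weighted chi-square sum with weights $s_j/(1-2\lambda s_j)\in[s_j,2s_j]$, so its density is bounded by $C/\sqrt v$. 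Also $\mathbb E e^{Q/4}\le e^{Cr}$. Thus $\rho q(\rho^2)\le C v^{-1/2}\rho e^{Cr-\rho^2/4}$, which is $\le C\sqrt{r/v}$ once $\rho^2\ge C''r$. Combining the two ranges gives \eqref{eq:gaussian-norm-density}.

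\textbf{Covariance comparison.} Apply \citet[Corollary~2.3]{GNSU2019}. It bounds the Kolmogorov distance between $\|G\|^2$ and $\|G'\|^2$ by $C\,\|\Sigma-\Sigma'\|_1/\sqrt{v}$, up to the normalisation used there. Since $0\preceq\Sigma-\Sigma'\preceq\epsilon\Sigma$, the trace norm is at most $\epsilon r$. The events $\{\|G\|\le R\}$ and $\{\|G\|^2\le R^2\}$ coincide, so \eqref{eq:gaussian-relative-covariance} follows.

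As a self-contained alternative, I would couple $G'=(\Sigma'\Sigma^{-1})^{1/2}G$, or better $G=G'+H$ with $H\sim N(0,\Sigma-\Sigma')$ independent of $G'$. Then
\[
\mathbb P\{\|G\|\le R\}-\mathbb P\{\|G'\|\le R\}
\]
is controlled by $\mathbb E\bigl|\|G'+H\|^2-\|G'\|^2\bigr|$ times the density bound. That gives $\epsilon r\cdot C/\sqrt v$, because the squared-norm densities of both $G$ and $G'$ are $\le C/\sqrt v$: each has squared mass $\ge (1-\epsilon)^2v\ge 11/8$ after rescaling. The integrability constant is still fine with $v/4$ replaced by $(1-\epsilon)^2 v/4$. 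Here $\mathbb E|2\langle G',H\rangle+\|H\|^2|\le 2\sqrt{\epsilon r\cdot r}+\epsilon r$, which is not linear in $\epsilon$. So this naive coupling yields only $\sqrt\epsilon$, and I would rely on the cited corollary, or on a second-order interpolation $\Sigma_\tau=\Sigma'+\tau(\Sigma-\Sigma')$ in which the derivative of the distribution function at level $R^2$ is $\tfrac12\tr\bigl((\Sigma-\Sigma')\nabla^2\bigr)$ applied to the Gaussian measure, bounded by $\epsilon r\|q_\tau'\|$-type quantities.

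\textbf{Main obstacle.} The main obstacle is the linear-in-$\epsilon$ dependence in \eqref{eq:gaussian-relative-covariance}. I expect to take it from \citet{GNSU2019} rather than reprove it.
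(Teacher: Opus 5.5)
Your proposal is correct. It follows the paper for the first and third claims and takes a genuinely different route for the norm density.

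\textbf{First and third claims.} The paper bounds $\|g\|_\infty$ by citing \citet[Theorem~1]{BobkovNaumovUlyanov2021}. Your Fourier-inversion backup is a valid self-contained substitute: it combines the envelope $(1+t^2)^{-v/4}$ for $|t|>1$ with the Gaussian bound $e^{-ct^2v}$ on $|t|\le1$. The covariance comparison is exactly the paper's: \citet[Corollary~2.3]{GNSU2019} together with $\|\Sigma-\Sigma'\|_1=\tr(\Sigma-\Sigma')\le\epsilon r$. That corollary carries the factor $\kappa_*(\Sigma)+\kappa_*(\Sigma')$, where $\kappa_*(A)=\{\tr(A^2)[\tr(A^2)-\|A\|^2]\}^{-1/4}$. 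You should therefore state explicitly that $(1-\epsilon)\Sigma\preceq\Sigma'$ gives $s_j(\Sigma')\ge(1-\epsilon)s_j(\Sigma)$. Hence $\kappa_*(\Sigma')\le(1-\epsilon)^{-1}\kappa_*(\Sigma)\le C/\sqrt v$, rather than leaving this as ``up to the normalisation used there.'' The coupling and interpolation discussion can be dropped.

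\textbf{Norm density: the paper's route.} The paper uses the gamma size-bias identity $xg(x)=\frac12\sum_js_j(g*e_j)(x)$, where $e_j$ is the exponential density of mean $s_j$. This gives $\sup_x xg(x)\le\frac r2\|g\|_\infty$. At $x=R^2/2$ one then has, pointwise, $f_R(R)=\sqrt{2\{xg(x)\}g(x)}\le\sqrt r\,\|g\|_\infty$. There is no case split and no tail estimate, and any bound on $\|g\|_\infty$ transfers with the explicit factor $\sqrt r$.

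\textbf{Norm density: your route.} You split at $\rho^2\asymp r$.
\begin{itemize}
\item Below the split, $2\rho\,q(\rho^2)\le C\sqrt{r/v}$ trivially.
\item Above it, tilting at $\lambda=1/4$ gives $q(x)=e^{-x/4}\,\mathbb E[e^{Q/4}]\,q_{1/4}(x)$, with $\mathbb E e^{Q/4}=\prod_j(1-s_j/2)^{-1/2}\le e^{Cr}$.
\item The tilted weights $s_j/(1-s_j/2)$ dominate the $s_j$. So the characteristic-function modulus of the tilted law is pointwise no larger than that of $Q$, and the same Fourier integral gives $\|q_{1/4}\|_\infty\le C/\sqrt v$.
\end{itemize}
This monotonicity in the weights is the step that legitimizes your tilted density bound, so state it explicitly. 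The argument is valid, but it costs an exponential-moment computation and yields a worse numerical constant. The paper's identity is shorter and purely structural.
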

\begin{proof}
Let $s_1\ge s_2\ge\cdots>0$ be the positive eigenvalues of
$\Sigma$, so $s_1=1$. The upper bound in
\citet[Theorem~1]{BobkovNaumovUlyanov2021}, applied to
$\|G\|^2=\sum_j s_jZ_j^2$ with independent standard normal $Z_j$,
gives, after rescaling, for the density $g$ of $Q=\|G\|^2/2$,
\[
 \|g\|_\infty
 \le C\left\{\Bigl(\sum_j s_j^2\Bigr)
                   \Bigl(\sum_{j\ge2}s_j^2\Bigr)\right\}^{-1/4}
 =C\{v(v-1)\}^{-1/4}\le C/\sqrt v.
\]
The last inequality uses $v\ge11/2$. The change of variables
$f_R(R)=R g(R^2/2)$ alone does not give a uniform bound on $f_R$,
because its prefactor is unbounded. The following elementary
identity supplies the additional control.

Let $e_j$ be the exponential density of mean $s_j$.
The gamma density identity for the summand $s_jZ_j^2/2$, whose
shape is $1/2$ and scale is $s_j$, yields
\begin{equation}\label{eq:gaussian-size-bias}
 xg(x)=\frac12\sum_js_j(g*e_j)(x).
\end{equation}
To see this, multiplying the density of that summand by its
argument increases the gamma shape from $1/2$ to $3/2$ and
multiplies its density by $s_j/2$. Increasing the shape by one
is equivalent to adding an independent exponential variable of
mean $s_j$. Convolve with the other independent summands and sum
over $j$ to obtain \eqref{eq:gaussian-size-bias}. Hence
\[
 \sup_{x\ge0}xg(x)\le\frac r2\|g\|_\infty.
\]
Since $f_R(R)=R g(R^2/2)$ for $R>0$, putting $x=R^2/2$ gives
\[
 f_R(R)=\sqrt{2\{xg(x)\}g(x)}
 \le\sqrt r\|g\|_\infty\le C\sqrt{r/v}.
\]
This proves \eqref{eq:gaussian-norm-density}.

For the covariance comparison, put
\[
 \kappa_*(A)=\left\{\tr(A^2)
               \bigl[\tr(A^2)-\|A\|^2\bigr]\right\}^{-1/4}.
\]
Corollary~2.3 of \citet{GNSU2019}, together with their upper bound
on the spectral factor in Corollary~2.2, gives
\[
 \sup_{R\ge0}\left|\mathbb P(\|G'\|\le R)-\mathbb P(\|G\|\le R)\right|
 \le C\{\kappa_*(\Sigma)+\kappa_*(\Sigma')\}
             \|\Sigma-\Sigma'\|_1,
\]
where $\|\cdot\|_1$ is the nuclear norm. Here
$\kappa_*(\Sigma)=\{v(v-1)\}^{-1/4}\le C/\sqrt v$.
Eigenvalue monotonicity under
$(1-\epsilon)\Sigma\preceq\Sigma'\preceq\Sigma$ gives
$\kappa_*(\Sigma')\le(1-\epsilon)^{-1}\kappa_*(\Sigma)$.
Also $\Sigma-\Sigma'\succeq0$ and
$\|\Sigma-\Sigma'\|_1=\tr(\Sigma-\Sigma')\le\epsilon r$.
Since $\epsilon\le1/2$, these inequalities prove
\eqref{eq:gaussian-relative-covariance}.
\end{proof}

\begin{lemma}\label{lem:truncation-transfer}
Assume \textup{(A1)--(A3)} and let $Y_i,T_i,\mu$ be as in
Lemma~\ref{lem:new-truncation}. Put
\[
 \Sigma'=\sum_iT_i,\quad q=\|\Sigma'\|,\quad
 \widehat X_i=Y_i/\sqrt q,\quad
 \widehat\Sigma=\Sigma'/q,\quad
 \widehat r=\tr\widehat\Sigma,\quad
 \widehat v=\tr(\widehat\Sigma^2).
\]
For all sufficiently large $n$, the array $(\widehat X_i)$ satisfies
\textup{(A1), (A2), (A3)(i)} with uniformly enlarged constants, and
$\|\widehat X_i\|\le3\sqrt{\widehat r}$ almost surely. With
$\epsilon_n=2K_4C_1/n$,
\begin{gather}
 (1-\epsilon_n)r\le\widehat r\le r/(1-\epsilon_n),\qquad
 (1-\epsilon_n)^2v\le\widehat v\le v/(1-\epsilon_n)^2,
 \label{eq:truncated-spectrum}\\
 \widehat v\ge11/2,\qquad
 s_6(\widehat\Sigma)\ge(1-\epsilon_n)s_6(\Sigma).
 \label{eq:truncated-sixth}
\end{gather}
If $\widehat\Delta_n(\mathcal B_0)$ denotes its centered-ball Gaussian approximation
error, then
\begin{equation}\label{eq:truncation-transfer}
 \Delta_n(\mathcal B_0)\le\widehat\Delta_n(\mathcal B_0)+\frac Cn+
                   C\frac r{n\sqrt v}.
\end{equation}
\end{lemma}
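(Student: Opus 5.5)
The plan is to derive everything from Lemma~\ref{lem:new-truncation}, and then to pass from $\boldsymbol S_n$ to $\widehat S:=\sum_i\widehat X_i$ through a chain of comparisons with Gaussian laws.

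\emph{Properties of the rescaled array.} Summing $(1-\epsilon_n)\Sigma_i\preceq T_i\preceq\Sigma_i$ over $i$ gives $(1-\epsilon_n)\Sigma\preceq\Sigma'\preceq\Sigma$. Hence $q\in[1-\epsilon_n,1]$, and every eigenvalue of $\widehat\Sigma=\Sigma'/q$ lies between $(1-\epsilon_n)s_j$ and $s_j/(1-\epsilon_n)$. This yields \eqref{eq:truncated-spectrum}. It also yields $s_6(\widehat\Sigma)\ge s_6(\Sigma')\ge(1-\epsilon_n)s_6(\Sigma)$, and, for large $n$, $\widehat v\ge(1-\epsilon_n)^2\cdot6\ge11/2$.

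The remaining assumptions transfer as follows.
\begin{itemize}
\item \textup{(A1)} holds because $n\|T_i/q\|\le C_1/(1-\epsilon_n)$ and $n\tr(T_i/q)\le C_1r/(1-\epsilon_n)\le C_1\widehat r/(1-\epsilon_n)^2$.
\item \textup{(A2)} is \eqref{new:trunc-fourth}, which is homogeneous under the rescaling.
\item \textup{(A3)(i)} holds with constant $C_\Sigma(1-\epsilon_n)^{-3}$.
\item The norm bound follows from $\|\widehat X_i\|\le2\sqrt r/\sqrt q\le2\sqrt{\widehat r}/(1-\epsilon_n)\le3\sqrt{\widehat r}$.
\end{itemize}

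\emph{Transfer inequality.} On the event $\bigcap_iE_i^c$ we have $\boldsymbol S_n=\sum_iY_i+\mu=\sqrt q\,\widehat S+\mu$. By \eqref{new:trunc-tail}, this event fails with probability at most $K_4C_1^2/n$. Write $G'\sim N(0,\Sigma')$, so that $\sqrt q\,\widehat G$ has the law of $G'$, where $\widehat G\sim N(0,\widehat\Sigma)$. Since $\|\sqrt q\,\widehat S\|-\|\mu\|\le\|\boldsymbol S_n\|\le\|\sqrt q\,\widehat S\|+\|\mu\|$ on that event, for each $R$
\[
\mathbb P\{\|\boldsymbol S_n\|\le R\}\le\mathbb P\{\|\sqrt q\,\widehat S\|\le R+\|\mu\|\}+\frac Cn
\le\mathbb P\{\|G'\|\le R+\|\mu\|\}+\widehat\Delta_n(\mathcal B_0)+\frac Cn ,
\]
and symmetrically from below with $R-\|\mu\|$. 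Here $\widehat\Delta_n$ is invariant under the scalar $\sqrt q$. Next, \eqref{eq:gaussian-relative-covariance} with $\epsilon=\epsilon_n$ replaces $G'$ by $\widetilde{\boldsymbol S}$ at cost $C\epsilon_nr/\sqrt v=Cr/(n\sqrt v)$. Finally, \eqref{eq:gaussian-norm-density} bounds the shift by $\|\mu\|$: it costs $\|f_R\|_\infty\|\mu\|\le C\sqrt{r/v}\,\|\mu\|$. Since $\|\Sigma\|=1$, we have $\|\mu\|\le\|\Sigma^{1/2}\|\,\|\Sigma^{-1/2}\mu\|\le K_4C_1^{3/2}/n$. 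The shift cost is therefore $C\sqrt{r/v}/n\le Cr/(n\sqrt v)$, using $r\ge1$. Collecting the three costs gives \eqref{eq:truncation-transfer}.

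\emph{Main obstacle.} The delicate point is the mean shift. Applying the ball approximation directly to the shifted sum $\sqrt q\,\widehat S+\mu$ would leave the class $\mathcal B_0$. This is why the plan routes the shift through the Gaussian norm density, which is exactly the reason the uniform bound on $f_R$ in Lemma~\ref{lem:undo-truncation} was proved rather than only the bound on the squared-norm density. I also need $v\ge11/2$ for both Gaussian lemmas. For \eqref{eq:gaussian-relative-covariance} this is supplied by the hypothesis $v\ge6$ on $\Sigma$ itself; for the rescaled array it is the inequality $\widehat v\ge11/2$ proved in the first step.
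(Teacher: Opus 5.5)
Your proof is correct and follows essentially the paper's route. You derive the spectral bounds and the transfer of \textup{(A1), (A2), (A3)(i)} from the sandwich $(1-\epsilon_n)\Sigma\preceq\Sigma'\preceq\Sigma$; then, on the good event, you sandwich radii by $\|\mu\|$, compare with the Gaussian via $\widehat\Delta_n(\mathcal B_0)$, and pay $Cr/(n\sqrt v)$ for the covariance change through \eqref{eq:gaussian-relative-covariance}. The one cosmetic difference is that the paper absorbs the radius shift using the norm density of $G'\sim N(0,\Sigma')$, costing $C/n$ via $\widehat r/\widehat v\le C$, whereas you first switch to $\widetilde{\boldsymbol S}$ and use the density of $\|\widetilde{\boldsymbol S}\|$, costing $C\sqrt{r/v}/n$; this is absorbed just as well, and in your ordering $\widehat v\ge11/2$ is not actually needed for the transfer step, contrary to your closing remark.
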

\begin{proof}
Lemma~\ref{lem:new-truncation} gives
$(1-\epsilon_n)\Sigma\preceq\Sigma'\preceq\Sigma$ and
$1-\epsilon_n\le q\le1$. Eigenvalue monotonicity proves
\eqref{eq:truncated-spectrum} and \eqref{eq:truncated-sixth}; the
bound $\widehat v\ge11/2$ follows from $v\ge6$ for sufficiently
large $n$. The balance constants increase by at most
$(1-\epsilon_n)^{-1}$, and \eqref{new:trunc-fourth} supplies the
directional fourth-moment bound. Moreover,
\[
 \frac{\widehat r}{\widehat v}
 =\frac{q\tr\Sigma'}{\tr((\Sigma')^2)}
 \le\frac{C_\Sigma}{(1-\epsilon_n)^2}.
\]
The norm bound follows from $\|Y_i\|\le2\sqrt r$ and
$q\widehat r=\tr\Sigma'\ge(1-\epsilon_n)r$.

Write $S_Y=\sum_iY_i$ and let $G'\sim N(0,\Sigma')$.
Scaling radii by $\sqrt q$ identifies their centered-ball distance
with $\widehat\Delta_n(\mathcal B_0)$. Outside an event of probability $C/n$,
$\boldsymbol S_n=S_Y+\mu$, and \eqref{new:trunc-cov} gives
$a:=\|\mu\|\le C/n$. The triangle inequality sandwiches the ball
probabilities of $S_Y+\mu$ between those of $S_Y$ at radii $R-a$
and $R+a$, taking the lower probability to be zero for $R<a$.
After comparison with $G'$, the radius shift costs at most
$a\|f_{\|G'\|}\|_\infty\le C/n$, by
Lemma~\ref{lem:undo-truncation} and $\widehat r/\widehat v\le C$.
Finally, \eqref{eq:gaussian-relative-covariance} costs at most
$Cr/(n\sqrt v)$. Adding these errors proves
\eqref{eq:truncation-transfer}.
\end{proof}

The Gaussian mean-shift comparison by itself would not control a
translated ball for $S_Y$ using only its centered-ball approximation.
This is why the reduction also uses the norm-density bound and the
radius sandwich above.

\subsection{Part 1: the case \texorpdfstring{$v\ge7$}{v >= 7}}
\label{sec:case-v-ge7}
We first prove an auxiliary bound with the fixed margin $v\ge13/2$.
This margin allows for the covariance loss when an array with
$v\ge7$ is truncated.
\begin{proposition}
\label{prop:bounded-array-main}
Suppose that \textup{(A1)--(A3)} hold, $v\ge13/2$, and
$\|\boldsymbol X_i\|\le L_0\sqrt r$ almost surely, with $L_0$ fixed.
Then $\Delta_n(\mathcal B_0)\le Cr^{3/2}/n$, where $C$ depends only on
$C_1,K_4,C_\Sigma,L_0$.
\end{proposition}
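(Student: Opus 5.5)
We will follow the Fourier route of \citet{GoetzeZaitsev2014}. Lemma~\ref{lem:gaussian-smoothing} (Esseen smoothing in absolute-value form) together with the density bound $\|g\|_\infty\le C/\sqrt v$ of Lemma~\ref{lem:undo-truncation} gives, for a cutoff $T$ of order $n/r^{3/2}$ times a small constant,
\[
 \Delta_n(\mathcal B_0)\le C\int_{-T}^{T}\frac{|\mathfrak f(t)-\widetilde{\mathfrak f}(t)|}{|t|}\,dt+\frac{C}{T\sqrt v}.
\]
The second term is already $O(r^{3/2}/n)$. We split the integral into three frequency ranges and bound each by $Cr^{3/2}/n$: small $|t|\le t_0=c/\sqrt{r}$ (more precisely where the Taylor expansion in the summands is effective), moderate $t_0\le|t|\le t_1$ with $t_1$ a small constant, and large $t_1\le |t|\le T$.

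\textbf{Small frequencies.} We use a Lindeberg-type replacement of $\boldsymbol X_i$ by Gaussian $\widetilde{\boldsymbol X}_i\sim N(0,\Sigma_i)$ inside $e^{it\|\cdot\|^2/2}$. The third-order terms cancel after conditioning on the rest, since the first Edgeworth correction is odd and contributes zero. The fourth-order remainder is controlled via (A2) and the matrix bound \eqref{new:matrix-fourth}. Each summand contributes
\[
 C|t|^2\,\mathbb E\|\boldsymbol X_i\|^4+C|t|^3\cdots\le C t^2 r^2/n^2 ,
\]
up to factors of $1+|t|\,\|W\|^2$ for the partial sum $W$. Summing over $i$ gives $Ct^2r^2/n$ times a damping factor $D(t)$-like. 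To retain the damping we run the replacement on a symmetrized/partially Gaussian background, so that each intermediate characteristic function is dominated by $D(ct)$. The $dt/|t|$ integral of $t^2r^2 D(ct)/n$ converges because $D(t)\le(1+t^2)^{-v/4}$-type decay holds beyond $|t|\gtrsim 1$. The weight $r^2$ must be improved to $r^{3/2}$; we use that the effective scale where $D$ has not decayed is $|t|\lesssim 1$, and for $|t|\le 1/\sqrt r$ we use the crude $n^{-1}$-expansion, obtaining $\int t\,r^2/n\,dt\sim r/n$ there. On $[1/\sqrt r,1]$ we use the sharper third-order bound $|t|^{3/2}r^{3/2}/n$-type from boundedness $\|\boldsymbol X_i\|\le L_0\sqrt r$.

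\textbf{Moderate and large frequencies.} Here we use the Götze--Zaitsev symmetrization and decoupling. We split the index set into blocks, write $\|\boldsymbol S_n\|^2$ as a quadratic form, and bound $|\mathfrak f(t)|$ through $\mathbb E\exp(it\langle U,V\rangle)$ for independent symmetrized block sums $U,V$. Then we compare $U,V$ with Gaussians with covariance close to $\Sigma$. This comparison uses (A1), (A2), and the boundedness $L_0$ to control the Gaussian comparison error by $C r/n$ uniformly in $t$. It yields $|\mathfrak f(t)|\le C\prod_j(1+ct^2s_j^2)^{-1/4}+Cr^{3/2}/n\cdot(\ldots)$. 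The product is integrable against $dt/|t|$ on $[t_1,\infty)$ with a bound independent of $p$ exactly when $\sum_j s_j^2>$ a threshold strictly above $4$ in the relevant counting, and uniformly so when $v\ge13/2$. The margin $13/2-6$ gives a fixed power $|t|^{-1-\delta}$ of decay, so the integral over $[t_1,T]$ is bounded without a $\log T$ loss. Assumption (A3)(i) converts the resulting factors $r/v$ into constants.

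\textbf{Main obstacle.} The hard step is the moderate/large range. There one needs a bound on $|\mathfrak f(t)|$ that is uniform in $p$ and small eigenvalues and still integrable; the decoupled Gaussian comparison must lose only $O(r^{3/2}/n)$ rather than a determinant factor. We would first try to prove $|\mathfrak f(t)|\le C D(ct)^{\,\theta}+C r^{3/2}/n\cdot |t|^{-1}$-type estimates, with $\theta$ chosen so that $\theta v\ge 6+\delta$.
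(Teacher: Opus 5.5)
Your outline has a genuine gap in the large-frequency range. You plan to bound $|\mathfrak f(t)|$ pointwise on $[t_1,T]$, $T\asymp n/r^{3/2}$, by comparing decoupled block sums with Gaussians. The resulting bound would be a Gaussian-type product plus an error such as $Cr/n$ uniform in $t$, or $Cr^{3/2}/(n|t|)$. No such bound holds under the hypotheses.

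Let $\boldsymbol X_i=\boldsymbol\xi_i/\sqrt n$ with $\boldsymbol\xi_i$ uniform on $\{-1,1\}^7$. This bounded array satisfies \textup{(A1)--(A3)} with $r=v=7\ge13/2$. For even $n$, $\|\boldsymbol S_n\|^2/2=(2/n)\sum_{j=1}^7K_j^2$ with integer-valued $K_j$, so $\mathfrak f(\pi n)=1$. For each fixed odd $q$, equidistribution of $K_j$ modulo $q$ and the Gauss sum give $|\mathfrak f(\pi n/q)|\to q^{-7/2}$. For a large fixed odd $q$ the point $\pi n/q$ lies in $[t_1,T]$, while your proposed bounds are $o(1)$ there.

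Decoupling does not remove this. The symmetrized bilinear characteristic functions bound $|\mathfrak f(t)|^2$ from above, so they carry the same peaks. Any Lindeberg-type comparison error also grows polynomially in $t$; in the paper it is $H(t)\{r^2t^2/n+r^3t^3/n\}$, which is affordable only up to $\tau=C_Hn^{1/3}/\sqrt v$.

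Beyond $\tau$ you need an estimate that is integral in $t$ and tolerates arithmetic peaks, namely $\int_\tau^U|\mathfrak f(t)|\,dt/t\le C/n$ with $U=n/r^2$. The paper proves this in Proposition~\ref{prop:high-frequency}:
the real symmetrization inequality of G\"otze--Zaitsev reduces the problem to bilinear forms in Rademacher sign sums;
random cross frames (Lemma~\ref{lem:cross-frames} and the without-replacement construction after it) give fixed $6\times6$ matrices with singular values of order $\sqrt v$, uniformly in $p$ and the small eigenvalues;
H\"older's inequality over selected good frame pairs keeps the exponent $\beta=1/2$;
and Lemma~\ref{lem:arithmetic}, via Poisson summation and the G\"otze--Margulis lattice-rotation integral, yields $\int K_q(u;A)^{1/2}\,du/u\le C/q$ since $\beta s=3>2$.
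This arithmetic input is missing from your plan, and no decay margin of a Gaussian product can replace it.

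Two further points in your lower-frequency sketch need repair.

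First, a direct Lindeberg replacement of $\boldsymbol X_i$ by $N(0,\Sigma_i)$ leaves the cubic terms $\frac16\mathbb E D^3F_i(0)[\boldsymbol X_i^{\otimes3}]$, and these do not cancel. The third moments differ, and the background $W_i$ is not symmetric, so the oddness of the first Edgeworth term cannot be used termwise. The paper first replaces $\boldsymbol X_i$ by the third-moment-matching proxy $Z_i=A_i+G_i$ of Lemma~\ref{lem:third-moment-proxy}. That step costs only fourth-order remainders, which are damped by the Gaussian prefix and the unused-block argument of Lemma~\ref{lem:unused-block}. It then keeps $G_0\sim N(0,\Sigma/2)$ in every background and uses oddness only at the fully Gaussian background. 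The exact double replacement \eqref{moderate:double-replacement-explicit} then prices the cubic terms at $O(D_{1/2}(t)t^3r^3/n)$.

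Second, this $t^3$ factor is where the margin $v\ge13/2$ enters, not the high-frequency range. Integrating $t^2H(t)$ requires decay faster than $t^{-3}$. After the $14$-class covariance partition the damping is $(1+c_*t^2)^{-\kappa v/4}$ with $\kappa v_0/4=3/2+\delta/8$, which supplies it. Lemma~\ref{lem:A.3} then turns \eqref{moderate:full-bound} into $r^2/(nv)+r^3/(nv^{3/2})\le Cr^{3/2}/n$ by \textup{(A3)(i)}. The damping at scale $t\asymp1/\sqrt v$ does this automatically, so no separate improvement of $r^2$ to $r^{3/2}$ is needed.
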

\begin{proof}
Let $\Delta_n(\mathcal B_0)$ denote the supremum in Theorem~\ref{thm:2.1}.
For $n<N_0r^{3/2}$ the assertion follows from $\Delta_n(\mathcal B_0)\le1$.
It suffices to assume
\begin{equation}\label{eq:proof-nontrivial}
 n\ge N_0r^{3/2},
\end{equation}
where $N_0$ is a sufficiently large fixed constant. Choose
\begin{equation}\label{eq:proof-cutoffs}
 U=\frac n{r^2},\qquad
 \tau=C_H\frac{n^{1/3}}{\sqrt v},\qquad
 \ell=\min(U,\tau),
\end{equation}
with $C_H$ from Proposition~\ref{prop:high-frequency}.
The Kolmogorov distance is unchanged on replacing squared norms by
half squared norms. Lemma~\ref{lem:gaussian-smoothing} gives
\begin{equation}\label{eq:proof-smoothing}
 \Delta_n(\mathcal B_0)\le C\int_0^U
       \frac{|\mathfrak f(t)-\widetilde{\mathfrak f}(t)|}{t}\,dt
       +\frac C{U\sqrt v}.
\end{equation}
By \textup{(A3)(i)}, the smoothing remainder satisfies
\begin{equation}\label{eq:proof-smoothing-remainder}
 \frac C{U\sqrt v}=C\frac{r^2}{n\sqrt v}
       \le C\frac{r^{3/2}}n.
\end{equation}

We verify the cutoff restrictions. Increasing $N_0$ ensures
$n\ge Cr$, $n\ge Cv$, and all fixed sample-size conditions in the
auxiliary results. Since $v\le r\le n^{2/3}$ and
$v\ge r/C_\Sigma$,
\[
 h(\tau)=\frac{v n^{1/3}}
                   {C_H^2+C_H\sqrt v/n^{1/3}}
 \ge\frac{r n^{1/3}}{C_\Sigma(C_H^2+C_H)}.
\]
Thus $h(\tau)\ge C\{r+\log(2n)\}$ after increasing $N_0$.
In particular $\tau^2r/n\le1$, so the sixth projection estimate
for the truncated backgrounds is valid. Proposition~\ref{prop:moderate}
and its integration at the specified cutoff yield
\begin{equation}\label{eq:proof-moderate}
 \int_0^\ell
 \frac{|\mathfrak f(t)-\widetilde{\mathfrak f}(t)|}{t}\,dt
 \le C\frac{r^{3/2}}n.
\end{equation}
The new polynomial damping error is included in this estimate.
Indeed its contribution from the pointwise bound
\eqref{moderate:full-bound} is at most
\begin{align*}
 \frac C{n^3}\{r^{7/2}\tau^5+r^{9/2}\tau^6\}
 &\le C\left\{
   \frac{r^{7/2}}{n^{4/3}v^{5/2}}
  +\frac{r^{9/2}}{nv^3}\right\}\\
 &\le C\left\{\frac r{n^{4/3}}+\frac{r^{3/2}}n\right\}
 \le C\frac{r^{3/2}}n.
\end{align*}
This is the place where a polynomial tail replaces the previous
exponential-moment argument. The estimate is made at
$\tau=C_Hn^{1/3}/\sqrt v$, not at arbitrary larger frequencies.

If $U>\tau$, then $4\sqrt v\le r^2$ because $v\le r$ and
$r\ge6$. Therefore $U\le n/(4\sqrt v)$ and
Proposition~\ref{prop:high-frequency} gives
\begin{equation}\label{eq:proof-original-tail}
 \int_\tau^U|\mathfrak f(t)|\,\frac{dt}{t}\le C/n.
\end{equation}
For the Gaussian part, logarithmic concavity and Bernoulli's
inequality imply
\[
 D(t)\le(1+t^2)^{-v/4}
       \le(1+vt^2/6)^{-3/2},\qquad v\ge6.
\]
Consequently
\begin{equation}\label{eq:proof-gaussian-tail}
 \int_\tau^\infty D(t)\,\frac{dt}{t}
 \le\left(\frac6v\right)^{3/2}
             \int_\tau^\infty t^{-4}\,dt
 =\frac{2\sqrt6}{C_H^3n}.
\end{equation}
Combining \eqref{eq:proof-moderate}--\eqref{eq:proof-gaussian-tail}
with \eqref{eq:proof-smoothing} and
\eqref{eq:proof-smoothing-remainder} proves the proposition.
\end{proof}

\begin{proof}[Proof of Theorem~\ref{thm:2.1} when $v\ge7$]
By $\Delta_n(\mathcal B_0)\le1$, we may assume $n\ge N_0r^{3/2}$ with $N_0$
sufficiently large in terms of the fixed assumption constants.
Apply Lemma~\ref{lem:truncation-transfer}. Increasing $N_0$ ensures
\[
 \widehat v\ge(1-\epsilon_n)^2v\ge13/2.
\]
Proposition~\ref{prop:bounded-array-main} therefore applies to the
normalized truncated array, with uniformly enlarged constants, and
gives $\widehat\Delta_n(\mathcal B_0)\le C\widehat r^{3/2}/n\le Cr^{3/2}/n$.
The transfer cost in \eqref{eq:truncation-transfer} is of the same
order, since $r/\sqrt v\le\sqrt{C_\Sigma r}$. This proves the theorem
in the first regime.
\end{proof}

\subsection{Part 2: the case \texorpdfstring{$6\le v<7$}{6 <= v < 7}}
\label{sec:case-v-near6}
\begin{proof}[Proof of Theorem~\ref{thm:2.1} when $6\le v<7$]
Now $6\le r\le7C_\Sigma$, so $r$ is bounded in terms of the
assumption constants. Lemma~\ref{reserve6:sixth-eigenvalue} gives
$s_6(\Sigma)\ge(6C_\Sigma-5)^{-1}$. We may assume $n$ sufficiently
large, absorbing the remaining finite range into the constant.
Apply Lemma~\ref{lem:truncation-transfer}. Its cost is $C/n$, and
the normalized bounded array satisfies
\[
 \widehat r\le8C_\Sigma,\qquad
 \widehat v\ge6-C/n\ge11/2,\qquad
 s_6(\widehat\Sigma)\ge\frac1{2(6C_\Sigma-5)}.
\]
Thus the local assumptions \eqref{endpoint6:local-assumptions} hold
with fixed constants, even if $\widehat v<6$.

Let $\widehat{\mathfrak f}$ and $\widehat{\mathfrak g}$ be the
characteristic functions of the half squared norms of this array
and its Gaussian counterpart. Choose $\tau=T_0n^{1/3}$, with $T_0$
fixed and large enough that
$\tau\ge C_Hn^{1/3}/\sqrt{\widehat v}$ in
Proposition~\ref{prop:high-frequency}. The two comparison results in
Appendix~\ref{sec:reserve6}, Propositions~\ref{endpoint6:first-comparison}
and~\ref{reserve6:comparison}, give
\begin{equation}\label{eq:near6-moderate}
 \int_0^\tau
 \frac{|\widehat{\mathfrak f}(t)-\widehat{\mathfrak g}(t)|}{t}\,dt
 \le C/n.
\end{equation}
The second proposition bounds an integral over all frequencies, so
its own choice of cutoff does not restrict the present $T_0$.

Set $U=n/\widehat r^2$. For sufficiently large $n$, $U\ge\tau$,
and $U\le n/(4\sqrt{\widehat v})$ since
$\widehat r\ge\widehat v\ge11/2$. Proposition~\ref{prop:high-frequency}
therefore gives
\[
 \int_\tau^U|\widehat{\mathfrak f}(t)|\,\frac{dt}{t}\le C/n.
\]
The lower bound on the sixth eigenvalue gives
$|\widehat{\mathfrak g}(t)|\le C(1+t^2)^{-3/2}$, and hence its tail
integral from $\tau$ is at most $C\tau^{-3}=C/n$.
Esseen smoothing in Lemma~\ref{lem:gaussian-smoothing}, whose
remainder is $C/(U\sqrt{\widehat v})\le C/n$, now yields
$\widehat\Delta_n(\mathcal B_0)\le C/n$. Adding the truncation transfer cost gives
$\Delta_n(\mathcal B_0)\le C/n\le Cr^{3/2}/n$ in this regime, completing the proof.
\end{proof}

\section*{Declaration of AI use}
The authors used ChatGPT (OpenAI) for checking arguments for correctness, and for polishing the text of the manuscript. The research questions and proposed method were developed by the authors. The authors take full responsibility for the content of the paper, including the correctness of all results, proofs, and references.

\bibliographystyle{plainnat}
\bibliography{CLT_refs}

\appendix
\section{Real characteristic-function estimates}\label{sec:appendixA}
All vector arguments and all shifts in the characteristic functions
in this appendix are real. Complex matrices occur only as explicit
resolvents after exact Gaussian averaging.
After the truncation reduction in Section~\ref{sec:proofs}, the
non-Gaussian increments in this appendix satisfy
$\|\boldsymbol X_i\|\le L_0\sqrt r$ almost surely, with fixed $L_0$.
Gaussian replacements have the same individual covariance matrices
as the corresponding increments. A mixed sum normally contains at most
one original or replacement increment from each index, with omissions
allowed. Where specified, it may also contain a fixed number of
independent copies, bounded scalar proxies, or Gaussian components
per index, with fixed coefficients. All such constants are uniform.
The fixed-margin estimates below are used only for $v\ge13/2$.
In their parameterization set $\delta=1/2$ once and for all; it is
a numerical auxiliary parameter, not a gap required in the main theorem.
Constants depend on the fixed constants in \textup{(A1)--(A3)}, but never on
$p,n$ or individual eigenvalues. Fix
\begin{equation}\label{eq:A-margin-parameters}
 v_0=6+\delta,\qquad
 \varepsilon=\frac{\delta}{8v_0},\qquad
 \kappa=1-4\varepsilon=1-\frac{\delta}{2v_0},\qquad
 c_*=\frac1{8\cdot14^2}.
\end{equation}
Thus $0<\varepsilon<1/8$, $1/2<\kappa<1$, and
$\kappa v_0/4=3/2+\delta/8$. These choices leave a positive margin
in the weighted integrals after the covariance partition.

\begin{lemma}
\label{lem:gaussian-smoothing}
Suppose $\Sigma\succ0$, $\|\Sigma\|=1$, and
$v=\tr(\Sigma^2)\ge11/2$. Let $F_G$ and
$\widetilde{\mathfrak f}$ be the distribution and characteristic
functions of $\|\widetilde{\boldsymbol S}\|^2/2$, where
$\widetilde{\boldsymbol S}\sim\mathcal N(0,\Sigma)$.
Then $F_G$ has a bounded continuous density $g$, with
$\|g\|_\infty\le C/\sqrt v$. If $F$ is a distribution function with
finite first absolute moment and characteristic function $f$, then
for every $U>0$,
\begin{equation}\label{eq:esseen-gaussian-smoothing}
 \sup_x|F(x)-F_G(x)|
 \le C\int_0^U\frac{|f(t)-\widetilde{\mathfrak f}(t)|}{t}\,dt
       +\frac C{U\sqrt v}.
\end{equation}
\end{lemma}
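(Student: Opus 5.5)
The density bound is already contained in the proof of Lemma~\ref{lem:undo-truncation}. The upper bound of \citet[Theorem~1]{BobkovNaumovUlyanov2021} for $Q=\|\widetilde{\boldsymbol S}\|^2/2=\frac12\sum_j s_jZ_j^2$ gives $\|g\|_\infty\le C\{v(v-1)\}^{-1/4}\le C/\sqrt v$ for $v\ge11/2$. Continuity of $g$ follows from the characteristic function. Indeed, $|\widetilde{\mathfrak f}(t)|=D(t)\le(1+t^2)^{-v/4}$ by log-concavity, exactly as in the proof of Proposition~\ref{prop:bounded-array-main}. Since $v/4>1$, $\widetilde{\mathfrak f}$ is integrable, so Fourier inversion yields a bounded continuous density.

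For the smoothing inequality I will invoke the classical Esseen smoothing lemma in its absolute-value form, with $G=F_G$ as the reference distribution. For distribution functions $F,G$ with $G$ having density bounded by $A$, and characteristic functions $f,\widetilde{\mathfrak f}$, every $U>0$ gives
\[
 \sup_x|F(x)-G(x)|\le \frac1\pi\int_{-U}^{U}\frac{|f(t)-\widetilde{\mathfrak f}(t)|}{|t|}\,dt+\frac{24A}{\pi U}.
\]
Two observations reduce this to the stated form. First, $f(-t)=\overline{f(t)}$ and likewise for $\widetilde{\mathfrak f}$, so the integral over $[-U,U]$ equals twice the integral over $[0,U]$. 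Second, $A=\|g\|_\infty\le C/\sqrt v$, which turns the remainder into $C/(U\sqrt v)$.

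The hypothesis of a finite first absolute moment ensures that $F(x)-F_G(x)$ is integrable, so the Fourier transform of the smoothed difference is legitimate. Integrability of $|f(t)-\widetilde{\mathfrak f}(t)|/t$ near $t=0$ holds because the difference is $O(|t|)$ there: both $F$ and $F_G$ have finite first moments. If the integral diverges anyway, the bound is trivial.

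I do not expect a real obstacle. The only care needed is to quote the smoothing lemma with the kernel $(1-|t|/U)$-type Fejér smoothing, and to check that no extra dependence on $v$ enters beyond $\|g\|_\infty$. Constants are then numerical.
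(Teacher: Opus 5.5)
Your proposal is correct and follows essentially the same route as the paper. You import the density bound from Lemma~\ref{lem:undo-truncation} (Bobkov--Naumov--Ulyanov), get continuity from the integrable envelope $|\widetilde{\mathfrak f}(t)|\le(1+t^2)^{-v/4}$, and then apply Esseen's smoothing inequality for a bounded-density target, folding $[-U,U]$ onto $[0,U]$ by conjugate symmetry and using finite first moments for integrability at zero.
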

\begin{proof}
The density bound follows from Lemma~\ref{lem:undo-truncation}.
Continuity also follows by Fourier inversion: Gaussian integration
and concavity give
\[
 |\widetilde{\mathfrak f}(t)|
 =\prod_j(1+t^2s_j^2)^{-1/4}
 \le(1+t^2)^{-v/4},
\]
which is integrable for $v>2$. The classical smoothing inequality
of \citet{Esseen1945}, applied to a target with bounded density, is
\[
 \sup_x|F(x)-F_G(x)|
 \le C\int_{-U}^U
       \frac{|f(t)-\widetilde{\mathfrak f}(t)|}{|t|}\,dt
       +\frac{C\|g\|_\infty}{U}.
\]
Conjugate symmetry and the density bound prove
\eqref{eq:esseen-gaussian-smoothing}. The finite first moments ensure
integrability at zero. In every application here, they follow from
the finite second moments of the underlying vectors.
\end{proof}

For the projection and tail estimates assume $\|X_i\|\le L_0\sqrt r$ almost
surely, with $L_0$ fixed. Mixed sums below consist of a fixed number
of independent copies per index, bounded scalar proxies, and
Gaussian replacements, with bounded deterministic coefficients.
Their constituent covariance matrices have operator norm at most
$C/n$ and trace at most $Cr/n$; their total covariance is at most
$C\Sigma$. The scalar proxies are interpreted as individual
independent constituents. Their standardized fourth moments remain
uniformly bounded.

\begin{lemma}
\label{lem:directional-moments}\label{lem:new-projection-moments}
For every such mixed sum $W$ and every real vector $a$,
\begin{align}
 \mathbb E|a^\top W|^4
 &\le C(a^\top\Sigma a)^2,\label{new:fourth-projection}\\
 \mathbb E|a^\top W|^6
 &\le C\left\{(a^\top\Sigma a)^3
       +\frac rn\|a\|^4(a^\top\Sigma a)\right\}.
 \label{new:sixth-projection}
\end{align}
Consequently, suppose a deterministic complex matrix $R$ satisfies
\[
 \|R\|\le1,\qquad\|R\Sigma R^*\|\le b^2,
 \qquad b^4\ge r/n.
\]
Then for every real vector $u$ and $1\le q\le6$,
\begin{equation}\label{new:resolvent-projection}
 \|u^\top RW\|_{L^q}\le C\|u\|b.
\end{equation}
For $q\le4$, the condition $b^4\ge r/n$ is unnecessary.
\end{lemma}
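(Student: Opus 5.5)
We write $W=\sum_i W_i$ as a sum of independent constituents. Each $W_i$ is a fixed linear combination, with bounded coefficients, of a truncated increment $X_i$, an independent copy, a Gaussian replacement with covariance $\Sigma_i$, or a scalar proxy. By Minkowski's inequality it suffices to treat each type of constituent separately, so we may take $W_i$ independent, mean zero, with $\Var(W_i)=:\Gamma_i$, $\sum_i\Gamma_i\preceq C\Sigma$, $\|\Gamma_i\|\le C/n$, and $\tr\Gamma_i\le Cr/n$. Fix a real vector $a$ and put $\xi_i=a^\top W_i$. These are independent, centered, real variables with $\sum_i\mathbb E\xi_i^2\le Ca^\top\Sigma a$. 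By (A2) (Gaussians satisfy it with $K_4=3$, and the proxies by assumption), $\mathbb E\xi_i^4\le C(a^\top\Gamma_i a)^2$.

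For \eqref{new:fourth-projection} we expand the fourth moment of a sum of independent centered variables:
\[
\mathbb E\Bigl(\sum_i\xi_i\Bigr)^4=\sum_i\mathbb E\xi_i^4+3\sum_{i\ne j}\mathbb E\xi_i^2\,\mathbb E\xi_j^2\le C\Bigl(\sum_i a^\top\Gamma_ia\Bigr)^2\le C(a^\top\Sigma a)^2,
\]
using $\sum_i(a^\top\Gamma_ia)^2\le(\sum_i a^\top\Gamma_ia)^2$. For \eqref{new:sixth-projection} we apply Rosenthal's inequality with $q=6$:
\[
\mathbb E|\textstyle\sum\xi_i|^6\le C\bigl\{(\sum\mathbb E\xi_i^2)^3+\sum\mathbb E\xi_i^6\bigr\}.
\]
The first term gives $(a^\top\Sigma a)^3$. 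For the second term we use boundedness. For a truncated increment, $|\xi_i|\le\|a\|\,\|W_i\|\le CL_0\|a\|\sqrt r$, so
\[
\mathbb E\xi_i^6\le C\|a\|^2 r\,\mathbb E\xi_i^4\le C\|a\|^2 r\,(a^\top\Gamma_ia)^2\le C\|a\|^2r\,\frac{\|a\|^2}{n}\,a^\top\Gamma_ia,
\]
where the last step uses $\|\Gamma_i\|\le C/n$. Summing over $i$ yields $C\frac rn\|a\|^4a^\top\Sigma a$. For Gaussian constituents, $\mathbb E\xi_i^6=15(a^\top\Gamma_ia)^3$, and the same $\|\Gamma_i\|\le C/n$ bound gives at most $C\frac1n\|a\|^4a^\top\Sigma a\le C\frac rn\|a\|^4a^\top\Sigma a$. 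The scalar proxies need a bounded standardized sixth moment or boundedness by $C\sqrt{r/n}$. This requirement is where the bookkeeping is delicate: one must check that every constituent type entering the mixed sums is either Gaussian or bounded at scale $\sqrt r$ times the relevant direction, and this is the main point to verify.

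For the resolvent bound \eqref{new:resolvent-projection}, write the complex vector $R^\top u$ (so that $u^\top RW=(R^\top u)^\top W$) as $a+ib$ with real $a$ and $b'$ (to avoid a clash with the scalar $b$ we call the imaginary part $c$). Then
\[
|u^\top RW|\le|a^\top W|+|c^\top W|,\qquad \|a\|,\|c\|\le\|R\|\,\|u\|\le\|u\|.
\]
Moreover, $a^\top\Sigma a+c^\top\Sigma c=u^\top R\Sigma R^*\bar u$ (the precise transposition convention only matters up to conjugation), which is at most $b^2\|u\|^2$. For $q\le4$, monotonicity of $L^q$ norms and \eqref{new:fourth-projection} give $\|a^\top W\|_{L^q}\le C(a^\top\Sigma a)^{1/2}\le Cb\|u\|$, with no condition on $b$. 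For $4<q\le6$, \eqref{new:sixth-projection} gives
\[
\|a^\top W\|_{L^6}^6\le C\{b^6\|u\|^6+\tfrac rn\|u\|^4 b^2\|u\|^2\}\le Cb^6\|u\|^6,
\]
precisely because $r/n\le b^4$. Taking sixth roots and combining the real and imaginary parts proves \eqref{new:resolvent-projection}.
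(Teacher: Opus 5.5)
Your proof is correct and follows essentially the same route as the paper: the exact fourth-moment expansion, the norm bound $\mathbb E|a^\top V_j|^6\le Cr\|a\|^2q_j^2$ for bounded constituents, and splitting $R^\top u$ into real and imaginary parts, with $r/n\le b^4$ absorbing the extra term; the only difference is that the paper replaces your order-six Rosenthal inequality by an explicit count of index multiplicities ($6$, $4+2$, $3+3$, $2+2+2$). The proxy issue you flag is already settled by the paper's standing hypotheses: the proxies $A_i=2\eta_i\boldsymbol X_i'$ satisfy $\|A_i\|\le 2L_0\sqrt r$, so they fall in the bounded case (the relevant scale is $\sqrt r$, not $\sqrt{r/n}$).
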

\begin{proof}
Write $W=\sum_jV_j$, let $T_j=\operatorname{Cov}(V_j)$, and put
$q_j=a^\top T_ja$. The uniform fourth moment and the exact expansion
of the fourth moment of a sum of independent centered scalars imply
\[
 \mathbb E|a^\top W|^4
 =\sum_j\mathbb E|a^\top V_j|^4
        +6\sum_{j<k}q_jq_k
 \le C\Big(\sum_jq_j\Big)^2
 \le C(a^\top\Sigma a)^2.
\]
For a bounded constituent, the norm bound gives
\[
 \mathbb E|a^\top V_j|^6
 \le Cr\|a\|^2q_j^2.
\]
Since $q_j\le C\|a\|^2/n$ and $\sum_jq_j\le Ca^\top\Sigma a$,
the sum of these sixth moments is at most
$C(r/n)\|a\|^4(a^\top\Sigma a)$.
For Gaussian constituents the sum is at most
$15\sum_jq_j^3\le C(a^\top\Sigma a)^3$.
The nonzero index multiplicities in the sixth-power expansion of
the centered independent sum are $6$, $4+2$, $3+3$, and $2+2+2$.
The last three types contribute at most $C(\sum_jq_j)^3$:
use the fourth-moment bound and
$|\mathbb E(a^\top V_j)^3|\le
 (q_j\mathbb E|a^\top V_j|^4)^{1/2}\le Cq_j^{3/2}$.
This proves \eqref{new:sixth-projection}.
For the complex projection, apply these bounds separately to the
real and imaginary parts of $R^\top u$. Each corresponding variance
is at most $b^2\|u\|^2$, and each coefficient norm is at most
$\|u\|$. Hence
\[
 \mathbb E|u^\top RW|^6
 \le C\|u\|^6\{b^6+(r/n)b^2\}
 \le C\|u\|^6b^6.
\]
Lower moments follow by Lyapunov's inequality; for $q\le4$, apply
\eqref{new:fourth-projection} directly.
\end{proof}

For the independent Gaussian component kept in each replacement,
$R=(I-it\Sigma/2)^{-1}$ satisfies
$\|R\|\le1$ and
$\|R\Sigma R^*\|\le b(t)^2:=\min(1,|t|^{-1})$.
Thus \eqref{new:resolvent-projection} holds whenever
$n\ge r$ and $t^2r/n\le1$.
The manuscript's moderate-frequency requirement
$h(t)=n/(t^2+|t|)\ge C(r+1)$ implies both conditions.
For a Gaussian prefix, the existing estimate
$b_\alpha(t)^2=\min\{1,3/(2\alpha|t|)\}$ also satisfies
$b_\alpha(t)^4\ge r/n$ on this range, because $0\le\alpha\le1$.
Only orders at most four are needed in the first comparison.

\begin{lemma}
\label{lem:new-quadratic-tail}
Let $V$ be a mixed sum as above with $\operatorname{Cov}(V)\preceq2\Sigma$.
For each fixed $b>0$, there are constants $c_b,C_b>0$ such that
\begin{equation}\label{new:quadratic-tail}
 \mathbb P\{t^2\|V\|^2>bn\}
 \le e^{-c_bn/t^2}+C_b\frac{r^3|t|^6}{n^4},
 \qquad n/t^2\ge C_b(r+1).
\end{equation}
\end{lemma}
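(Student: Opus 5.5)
The plan is to split $\|V\|^2$ into its mean and a centered fluctuation. The mean is handled by a trace bound, and the fluctuation is controlled by a truncated exponential bound plus a polynomial sixth-moment remainder.

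Write $V=\sum_j V_j$ with independent centered constituents. First, $\mathbb E\|V\|^2=\tr\operatorname{Cov}(V)\le 2r$. Under $n/t^2\ge C_b(r+1)$ with $C_b\ge 4/b$, the mean is at most $bn/(2t^2)$. It therefore suffices to bound $\mathbb P\{\|V\|^2-\mathbb E\|V\|^2>bn/(2t^2)\}$. Set $x=bn/(2t^2)$; note that $x\ge c\,r$ is large relative to the trace.

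For the fluctuation I would use the Hilbert-space martingale/Bernstein decomposition. Expanding, $\|V\|^2-\mathbb E\|V\|^2=\sum_j(\|V_j\|^2-\mathbb E\|V_j\|^2)+2\sum_{j<k}V_j^\top V_k$.

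\emph{Diagonal part.} For bounded constituents, $\|V_j\|^2\le Cr$. Its variance is $\sum_j\mathbb E\|V_j\|^4\le C\sum_j(\tr T_j)^2\le Cr^2/n$, by \eqref{new:matrix-fourth}-type bounds and (A1). Bernstein then gives $\exp(-c\min\{x^2n/r^2,\,x/r\})$. Since $x\ge cn/t^2$ and $x/r\ge c\,n/(t^2 r)\cdot$, hmm, this only gives $e^{-cx/r}$. That is why I would instead simply use Chebyshev with third moments on this term. Its sixth-moment contribution gives $C r^3/(n^2x^3)\le C r^3t^6/n^4\cdot n$, which is too large. So I should treat it more carefully: the centered diagonal sum has $L^3$ norm $\le C(r^2/n)^{1/2}+Cr\,n^{-1/3}$ roughly, and Markov with the third moment gives $\le C r^3/(n x^3)\cdot$... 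Instead I will use Gaussian constituents exactly (a $\chi^2$ tail, which is exponential in $x$, since $\|\operatorname{Cov}(V)\|\le2$ yields $e^{-cx}=e^{-c_bn/t^2}$). For the bounded ones I will use Bernstein with range $Cr/n\cdot n$ replaced by the sharper range $\|V_j\|^2\le L_0^2r$ and variance $Cr^2/n$. The Bernstein exponent $\min(x^2n/r^2,x/(L_0^2r))$ then exceeds $c\,n/t^2$ once $x^2 n/r^2\ge n/t^2$, which holds since $x\ge c\,n/t^2\ge c r$, and $x/r\ge$ ... this is the delicate point.

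\emph{Off-diagonal part.} The quadratic chaos $2\sum_{j<k}V_j^\top V_k$ is a martingale with increments $V_k^\top\sum_{j<k}V_j$. I would bound its sixth moment by Rosenthal/Burkholder, using \eqref{new:sixth-projection} conditionally: $\mathbb E|a^\top V_k|^6\le C\{(a^\top T_k a)^3+\ldots\}$ with $a$ the partial sum. This should give $\mathbb E|\text{chaos}|^6\le C(r^3+\ldots)$ of order $r^3\cdot\|\Sigma\|^3\cdot$ small factors. Markov at level $x^6\asymp n^6/t^{12}$ alone gives $r^3t^{12}/n^6$, which is dominated by $r^3t^6/n^4$ when $t^2\le n$. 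The exponential part comes from the large-deviation regime, handled via a Hanson--Wright-type truncation. The Gaussian bulk gives $e^{-cx}$, since the chaos's quadratic-form operator has norm $\le 2$. Non-Gaussian corrections enter only through the sixth moments of the bounded constituents, which produce the $r/n$ term in \eqref{new:sixth-projection}. Those corrections yield exactly the polynomial remainder $C_b r^3t^6/n^4$ after Markov at level $x^6$ combined with a fixed-power trade-off.

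I expect the main obstacle to be obtaining the exponential term $e^{-c_bn/t^2}$ with exponent proportional to $x$ rather than $x/r$ for the bounded (non-Gaussian) constituents. My first attempt would be a decoupling/Gaussian-comparison approach: replace each bounded constituent by a Gaussian of the same covariance. The exponential tail then comes from the $\chi^2$-type bound with $\|\operatorname{Cov}\|\le2$. The replacement error in a smooth indicator of $\{\|V\|^2>x\}$ would be estimated by a third-order Lindeberg expansion, and this is what the polynomial $r^3t^6/n^4$ term is designed to absorb.
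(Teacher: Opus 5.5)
\textbf{There is a genuine gap.} Your proposal does not prove the lemma. The exponential term $e^{-c_bn/t^2}$ is never obtained for the non-Gaussian constituents, which you flag yourself as the main obstacle, and both devices you sketch for it fail quantitatively.

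\emph{The chaos bound.} Sixth-moment Markov at level $x^6$ gives about $r^3t^{12}/n^6$. The inequality $t^{12}/n^6\le t^6/n^4$ requires $t^2\le n^{2/3}$, not $t^2\le n$. The lemma is needed up to $t^2\asymp n/(C_b(r+1))$, where your bound is of order $r^{-3}$ and does not decay in $n$, while the target is $O(1/n)$. So an exponential tail for the chaos is indispensable. A Hanson--Wright bound is not available off the shelf here, because the constituents are vectors with dependent coordinates, controlled only by directional fourth moments and $\|V_j\|\le L_0\sqrt r$.

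\emph{The Lindeberg fallback.} First and second moments match but third moments do not. For a smoothed indicator of $\{\|v\|^2>x\}$, even with a window of width of order $x$, one has $\sup\|D^3f\|\approx x^{-3/2}$. A direct third-order replacement bound is therefore of order $\sum_j\mathbb E\|V_j\|^3\,x^{-3/2}\approx r^{3/2}|t|^3/n^2$. This is the square root of the required $r^3t^6/n^4$, hence larger, since $r^{3/2}|t|^3/n^2\le n^{-1/2}$.

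\textbf{The missing idea.} Estimate the norm $\|V\|$, a supremum of linear functionals, rather than the fluctuation of $\|V\|^2$, using a Fuk--Nagaev inequality for independent sums in Hilbert space. The paper applies Theorem~3.1 of Einmahl and Li with exponent six:
\[
 \mathbb P\{\|V\|\ge2\mathbb E\|V\|+y\}\le e^{-y^2/(3\Lambda)}+Cy^{-6}\sum_j\mathbb E\|V_j\|^6,
 \qquad \Lambda=\|\operatorname{Cov}(V)\|\le2 .
\]
The Gaussian term sees only the weak variance, not $\tr\Sigma$ or the range of $\|V_j\|^2$. This is exactly what removes the $x/r$ loss you met with Bernstein.

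Put $a=\sqrt{bn}/|t|$ and $y=a/2$. The condition $n/t^2\ge C_b(r+1)$ gives $2\mathbb E\|V\|\le2\sqrt{2r}\le a/2$, so the first term is at most $e^{-bn/(24t^2)}$. Moreover $\mathbb E\|V_j\|^6\le L_0^2r\,\mathbb E\|V_j\|^4\le Cr(\tr T_j)^2$, or $15(\tr T_j)^3$ for Gaussian constituents. Then \textup{(A1)} gives $\sum_j\mathbb E\|V_j\|^6\le Cr^3/n$, and the second term is $C_br^3t^6/n^4$.

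Your mean bound and sixth-moment bookkeeping are the right ingredients. Your diagonal term was in fact already fine: Rosenthal gives $\mathbb E|\sum_j(\|V_j\|^2-\mathbb E\|V_j\|^2)|^3\le Cr^3/n$, and Markov at a level of order $x^3$ yields exactly $C_br^3t^6/n^4$.
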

\begin{proof}
For each bounded constituent of covariance $T_j$,
\eqref{new:matrix-fourth} and its norm bound give
$\mathbb E\|V_j\|^6\le Cr(\tr T_j)^2$.
For a Gaussian constituent the corresponding bound is
$15(\tr T_j)^3$. Using
$\max_j\tr T_j\le Cr/n$ and $\sum_j\tr T_j\le Cr$ yields
\begin{equation}\label{new:aggregate-sixth}
 \sum_j\mathbb E\|V_j\|^6
 \le C\left(\frac{r^3}{n}+\frac{r^3}{n^2}\right)
 \le C\frac{r^3}{n}.
\end{equation}
Apply the Banach-space Fuk--Nagaev inequality of
\citet[Theorem~3.1]{EinmahlLi2007} with exponent six
in Euclidean space. Here the weak variance is
$\Lambda=\|\operatorname{Cov}(V)\|\le2$, while
$\mathbb E\|V\|\le\sqrt{2r}$. The choices $\eta=\delta=1$ give
\[
 \mathbb P\{\|V\|\ge2\mathbb E\|V\|+x\}
 \le e^{-x^2/(3\Lambda)}
      +Cx^{-6}\sum_j\mathbb E\|V_j\|^6.
\]
Set $a=\sqrt{bn}/|t|$. Increase $C_b$ so that
$2\sqrt{2r}\le a/2$, and take $x=a/2$.
Equation~\eqref{new:aggregate-sixth} then proves
\eqref{new:quadratic-tail}. No sixth moment of the untruncated
original increments is used.
\end{proof}

\begin{lemma}\label{lem:A.3}
Let $\Sigma$ be positive semidefinite with eigenvalues $0\le s_j\le1$
and $v=\sum_js_j^2\ge v_0$. For every fixed $c>0$,
\begin{equation}\label{eq:A-spectral-envelope}
 \det(I+ct^2\Sigma^2)^{-1/4}
 \le(1+ct^2)^{-v/4}\le(1+ct^2)^{-\kappa v/4}.
\end{equation}
Moreover, for every $q\in\{1,2,3\}$,
\begin{equation}\label{eq:A.3}
 \int_0^\infty t^{q-1}(1+ct^2)^{-\kappa v/4}\,dt
 \le C_{q,c,\delta}v^{-q/2}.
\end{equation}
\end{lemma}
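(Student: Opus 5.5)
The statement to prove is Lemma~\ref{lem:A.3}. Both parts are elementary, and the plan is to obtain them from a concavity estimate for $\log(1+x)$, followed by a direct change of variables.

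\emph{Envelope \eqref{eq:A-spectral-envelope}.} Write
\[
 \det(I+ct^2\Sigma^2)^{-1/4}=\exp\Bigl\{-\tfrac14\sum_j\log(1+ct^2s_j^2)\Bigr\}.
\]
For fixed $t$, the function $\phi(x)=\log(1+ct^2x)$ is concave on $[0,1]$ with $\phi(0)=0$. Hence $\phi(x)\ge x\,\phi(1)$ for $x\in[0,1]$. Apply this with $x=s_j^2\in[0,1]$ and sum over $j$:
\[
 \sum_j\log(1+ct^2s_j^2)\ge v\log(1+ct^2).
\]
This gives the first inequality. The second inequality holds because $0<\kappa<1$ and $1+ct^2\ge1$, so raising $1+ct^2$ to the smaller exponent $\kappa v/4$ can only increase $(1+ct^2)^{-(\cdot)}$.

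\emph{Integral bound \eqref{eq:A.3}.} Set $a=\kappa v/4$. Since $v\ge v_0$, we have $a\ge\kappa v_0/4=3/2+\delta/8$. The integrand has order $t^{q-1-2a}$ at infinity, so the integral converges exactly when $2a>q$. For $q\le3$ this holds with the fixed margin $2a-q\ge\delta/4$; securing this margin is the purpose of the parameter choice in \eqref{eq:A-margin-parameters}.

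Substitute $t=s/\sqrt{cv}$. The integral becomes
\[
 (cv)^{-q/2}\int_0^\infty s^{q-1}\bigl(1+s^2/v\bigr)^{-\kappa v/4}\,ds .
\]
It then suffices to bound this $s$-integral uniformly in $v\ge v_0$. Split it at $s=\sqrt v$.

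On $s\le\sqrt v$, use $\log(1+y)\ge y/2$ for $y\in[0,1]$. This gives $(1+s^2/v)^{-\kappa v/4}\le e^{-\kappa s^2/8}$, and since $\kappa>1/2$ the resulting integral is bounded by a constant depending only on $q$.

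On $s\ge\sqrt v$, use $1+s^2/v\ge s^2/v$. The integrand is then at most $v^{a}s^{q-1-2a}$, whose integral from $\sqrt v$ to $\infty$ equals
\[
 v^{a}\,\frac{v^{(q-2a)/2}}{2a-q}=\frac{v^{q/2}}{2a-q}.
\]
This contains an unwanted factor $v^{q/2}$, so this crude bound fails when $v$ is large and the tail must be treated more carefully.

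\emph{Repairing the tail.} The main step is to control the tail uniformly in $v$, and I would handle it in two regimes.

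If $v\le 4q/\kappa$, then $v$ ranges over a bounded interval determined by $q$ and $\delta$. On this range the factor $v^{q/2}/(2a-q)$ is bounded by a constant $C_{q,\delta}$, which is acceptable.

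If $v$ is large, use Bernoulli's inequality in the form $(1+y)^{m}\ge1+my$, applied with an exponent $m$ of fixed size. Concretely, write
\[
 (1+s^2/v)^{-\kappa v/4}\le(1+s^2/v)^{-(\kappa v/4)\theta}\,(1+s^2/v)^{-(\kappa v/4)(1-\theta)} .
\]
Choose $\theta$ so that $(\kappa v/4)\theta=q/2+\delta/8$. Since $(1+y)^{v}\ge(1+y)^{v_0}$, the remaining factor satisfies
\[
 (1+s^2/v)^{-(\kappa v/4)(1-\theta)}\le (1+s^2/v)^{-\frac{\kappa v}{4}(1-\theta)}
\]
with exponent at least a fixed positive multiple of $v$. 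By Bernoulli's inequality this factor is bounded by $(1+c's^2)^{-M}$ with $M>q/2$ fixed, and its integral against $s^{q-1}$ is a constant.

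Alternatively, and more simply, one can use monotonicity of $v\mapsto(1+s^2/v)^{v}$, which is increasing. It gives
\[
 (1+s^2/v)^{-\kappa v/4}\le(1+s^2/v_0)^{-\kappa v_0/4}\qquad (v\ge v_0).
\]
The right-hand side is integrable against $s^{q-1}$ with a constant depending only on $q$ and $\delta$, since $\kappa v_0/2-q\ge\delta/4$. This single bound covers all $v\ge v_0$ at once and removes the need for the split. I would use this monotonicity argument as the main route, so the only real check is that the exponent margin $\kappa v_0/2>3$ holds, which is true by the choice of parameters. Together with the factor $(cv)^{-q/2}$ from the substitution, this yields \eqref{eq:A.3}.
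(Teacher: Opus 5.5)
Your final route is correct and is essentially the paper's proof: the envelope comes from the concavity bound $\log(1+ct^2x)\ge x\log(1+ct^2)$ on $[0,1]$, and the integral is reduced, after rescaling $t$ by $\sqrt v$, to a $v$-free integral via $(1+s^2/v)^{v}\ge(1+s^2/v_0)^{v_0}$ (the paper writes this as Bernoulli's inequality $(1+cu^2/v)^{v/v_0}\ge1+cu^2/v_0$), with the margin $\kappa v_0/2-q\ge\delta/4$ giving finiteness. The exploratory split at $s=\sqrt v$ and the Bernoulli-splitting paragraph, whose displayed inequality is a tautology, are superseded by this argument and can simply be dropped.
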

\begin{proof}
Concavity gives $\log(1+ct^2x)\ge x\log(1+ct^2)$ for $0\le x\le1$.
Sum this inequality over $x=s_j^2$ and use $\kappa<1$ to obtain
\eqref{eq:A-spectral-envelope}.

For the integral, substitute $u=t\sqrt v$. Since $v/v_0\ge1$,
Bernoulli's inequality gives
\[
 (1+cu^2/v)^{v/v_0}\ge1+cu^2/v_0.
\]
Consequently,
\begin{align*}
 \int_0^\infty t^{q-1}(1+ct^2)^{-\kappa v/4}\,dt
 &\le v^{-q/2}\int_0^\infty
       u^{q-1}(1+cu^2/v_0)^{-\kappa v_0/4}\,du\\
 &=v^{-q/2}\int_0^\infty
       u^{q-1}(1+cu^2/v_0)^{-3/2-\delta/8}\,du.
\end{align*}
The last integral is finite: it is integrable at zero since $q>0$,
and its tail is of order $u^{q-4-\delta/4}$, with exponent strictly
less than $-1$ for $q\le3$. More explicitly, one may take
\[
 C_{q,c,\delta}=\frac12\left(\frac{v_0}{c}\right)^{q/2}
 B\left(\frac q2,\frac32+\frac\delta8-\frac q2\right).
\]
This proves \eqref{eq:A.3}.
\end{proof}

\begin{lemma}
\label{lem:shifted-cf}\label{lem:new-shifted-cf}
Let $A,C$ be independent mixed sums as above with covariances
$Q,R\preceq\Sigma$. Uniformly in $z\in\mathbb R^p$,
\begin{equation}\label{new:shifted-cf}
 \left|\mathbb E e^{it\|A+C+z\|^2/2}\right|
 \le\det\left(I+\frac{t^2}{2}Q^{1/2}RQ^{1/2}\right)^{-1/4}
   +Ce^{-cn/t^2}+C\frac{r^{3/2}|t|^3}{n^2},
 \qquad n/t^2\ge C(r+1).
\end{equation}
\end{lemma}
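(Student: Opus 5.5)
We condition on $A$ and average over $C$ exactly after replacing $C$ by its Gaussian counterpart. The plan has three steps: a one-step Gaussian replacement of $C$, an exact Gaussian average in which we bound the modulus of a Gaussian integral of a complex quadratic exponential, and a concentration step for $\|A\|^2$ that feeds the $A$-dependence back into a determinant. Write $e(w)=e^{it\|w\|^2/2}$. The error terms are matched by design: $Ce^{-cn/t^2}+Cr^{3/2}|t|^3/n^2$ is the square root of the quadratic tail bound of Lemma~\ref{lem:new-quadratic-tail}, which we use through Cauchy--Schwarz.

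\emph{Step 1 (replacement of $C$).} Replace $C$ by an independent Gaussian $\widetilde C\sim N(0,R)$ through a Lindeberg-type swap of the constituents of $C$. A plain third-order swap costs $\sum_j t^3\,\mathbb E|\text{cubic}|$, which is too large. We therefore avoid a direct swap and instead symmetrize, in the manner of G\"otze--Zaitsev. Write $|\mathbb E e(A+C+z)|^2$ and introduce an independent copy, so that $\|A+C+z\|^2-\|A'+C+z\|^2$ becomes linear in $C$ given $A,A'$. Thus
\[
\Bigl|\mathbb E e(A+C+z)\Bigr|^2\le \mathbb E\,\Bigl|\mathbb E_C\, e^{it\langle A-A',C\rangle+it(\|A\|^2-\|A'\|^2)/2+\dots}\Bigr|.
\]
More precisely, we first bound $|\mathbb E_A e(A+C+z)|^2$ for fixed $C$ by the average over $A,A'$ of $\exp\{it(\langle A-A', C+z\rangle+\cdots)\}$. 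Averaging over $C$ then produces the characteristic function of the linear form $\langle A-A',C\rangle$. Its replacement by the Gaussian one is controlled by third and fourth moments via Lemma~\ref{lem:directional-moments}, and the conditional variance is $(A-A')^\top R(A-A')$.

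\emph{Step 2 (Gaussian average).} For Gaussian $C$, the quantity $\mathbb E_C e^{it\langle w,C\rangle}=e^{-t^2 w^\top Rw/2}$ with $w=A-A'$. Averaging this over the symmetrized $w$, whose covariance is $2Q$, should give $\mathbb E e^{-t^2 w^\top R w/2}$. If $w$ were Gaussian this would be exactly $\det(I+t^2 Q^{1/2}RQ^{1/2}\cdot 2)^{-1/2}$, and after taking the square root we obtain the stated determinant with exponent $-1/4$. The factor $t^2/2$ versus $2t^2$ is adjusted by splitting the symmetrization. For non-Gaussian $w$ we again replace $w$ by a Gaussian in the smooth bounded functional $w\mapsto e^{-t^2w^\top Rw/2}$. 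This Lindeberg step has derivatives controlled by $|t|^3\|R^{1/2}\|^3$, and it yields a remainder $\sum_j |t|^3\mathbb E\|V_j\|^3$. On the relevant range this is at most $C r^{3/2}|t|^3/n^{2}$ after one more conditioning. The tail pieces, where $t^2\|A\|^2>bn$ and the Taylor control fails, are discarded using Lemma~\ref{lem:new-quadratic-tail}, giving $e^{-cn/t^2}+Cr^3t^6/n^4$. Its square root is the stated error.

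\emph{Main obstacle.} The hardest step is obtaining the remainder $r^{3/2}|t|^3/n^2$ uniformly in $z$, because the shift $z$ enters the linear terms. A crude estimate gives only $n^{-1/2}$-type errors. I would first try to make the symmetrization cancel $z$ exactly: the difference $\|A+C+z\|^2-\|A'+C+z\|^2$ contains $z$ only through $\langle A-A',z\rangle$, which is a unimodular factor independent of $C$. The estimate is then uniform in $z$ once we take absolute values after the $C$-average. The delicate point is to control the third-order replacement error using the sixth-moment bound \eqref{new:sixth-projection} with the resolvent bound $b^4\ge r/n$, which is guaranteed by $n/t^2\ge C(r+1)$.
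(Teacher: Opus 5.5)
\textbf{Verdict: genuine gap.} The Lindeberg replacements you use to reduce to Gaussian computations cost an $n^{-1/2}$-size error, and the lemma's $n^{-2}$ polynomial term cannot come from them.

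\textbf{What is correct.} Your opening reduction is the paper's first step with the roles of $A$ and $C$ interchanged. Cauchy--Schwarz and an independent copy give $|\mathbb E e^{it\|A+C+z\|^2/2}|^2\le\mathbb E|\varphi_C(tw)|$ with $w=A-A'$. The shift $z$ then disappears into a unimodular factor, so uniformity in $z$ is not where the difficulty lies.

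\textbf{Where it fails.} The gap is in how you bound $\mathbb E|\varphi_C(tw)|$. You propose two Lindeberg replacements: first of $C$ in the linear form $\langle w,C\rangle$, then of $w$ in $w\mapsto e^{-t^2w^\top Rw/2}$. Each costs an \emph{additive} error of the size you wrote, $\sum_j|t|^3\mathbb E\|V_j\|^3$. Since $\mathbb E\|V_j\|^3\le C(r/n)^{3/2}$ for each of the order-$n$ constituents, this is of order $n\,|t|^3(r/n)^{3/2}=|t|^3r^{3/2}n^{-1/2}$, not $r^{3/2}|t|^3/n^2$. No further conditioning removes the factor $n^{3/2}$, because it is a sum of $n$ terms each of size $(r/n)^{3/2}$. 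Directional refinements still leave a factor $n^{-1/2}$.

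\textbf{Why it matters downstream.} In Proposition~\ref{prop:moderate} the polynomial part of $H$ is multiplied by $r^3t^3/n$ and integrated against $dt/t$ up to $\tau\asymp n^{1/3}/\sqrt v$. An error of order $|t|^3r^{3/2}n^{-1/2}$ would then contribute of order $r^{9/2}n^{1/2}/v^3$, which grows with $n$. The sixth-moment bound \eqref{new:sixth-projection} and the condition $b^4\ge r/n$ concern resolvent projections in the later derivative estimates and do not help here.

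\textbf{The missing idea.} Only an upper bound on a modulus is needed, so nothing has to be replaced by a Gaussian.
\begin{itemize}
\item \emph{Local bound.} For one constituent, symmetrization, $\cos x\le1-x^2/2+x^4/24$ and the fourth-moment bound give $|\varphi_{V_j}(u)|^2\le1-u^\top T_ju+K(u^\top T_ju)^2$. Since $\|T_j\|\le C/n$, multiplying over constituents yields $|\varphi_A(u)|\le e^{-u^\top Qu/4}$ and $|\varphi_C(u)|\le e^{-u^\top Ru/4}$ on $\|u\|^2\le c_0n$.
\item \emph{First tail.} Hence $\mathbb E|\varphi_C(tw)|\le\mathbb E e^{-t^2w^\top Rw/4}+\mathbb P\{t^2\|w\|^2>c_0n\}$, and the probability is handled by Lemma~\ref{lem:new-quadratic-tail}.
\item \emph{Gaussian randomization.} The remaining non-Gaussian average is converted exactly: with $\gamma$ standard normal, $\mathbb E e^{-t^2w^\top Rw/4}=\mathbb E_\gamma|\varphi_A(tR^{1/2}\gamma/\sqrt2)|^2$.
\item \emph{Second tail and integration.} The local bound applies again outside a chi-square tail of size $e^{r/2-c_0n/(2t^2)}\le e^{-cn/t^2}$. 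Real Gaussian integration then gives $\det(I+\tfrac{t^2}{2}R^{1/2}QR^{1/2})^{-1/2}$, and $\det(I+\tfrac{t^2}{2}R^{1/2}QR^{1/2})=\det(I+\tfrac{t^2}{2}Q^{1/2}RQ^{1/2})$.
\end{itemize}
The constant $t^2/2$ only reflects the factor lost in the two local bounds; exact Gaussians would give $2t^2$. So no splitting of the symmetrization is needed. The only non-determinant errors are the two tail probabilities. As you anticipated, $r^{3/2}|t|^3/n^2$ is precisely the square root of the $r^3t^6/n^4$ tail.
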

\begin{proof}
For a constituent $V_j$ with covariance $T_j$, symmetrization and
$\cos x\le1-x^2/2+x^4/24$ give
\[
 |\varphi_{V_j}(u)|^2
 \le1-u^\top T_ju+K(u^\top T_ju)^2.
\]
Because $\|T_j\|\le C/n$, this implies, after multiplication,
\begin{equation}\label{new:local-cf}
 |\varphi_A(u)|\le e^{-u^\top Qu/4},\qquad
 |\varphi_C(u)|\le e^{-u^\top Ru/4},
 \qquad \|u\|^2\le c_0n.
\end{equation}
The local domain is Euclidean; the potentially larger domain
$u^\top\Sigma u\le c_0n$ does not follow from \textup{(A1)}.
Let $C'$ be an independent copy and put $\widetilde C=C-C'$.
Cauchy--Schwarz and integration over $A$ give
\[
 |\mathbb E e^{it\|A+C+z\|^2/2}|^2
 \le\mathbb E|\varphi_A(t\widetilde C)|
 \le\mathbb E e^{-t^2\widetilde C^\top Q\widetilde C/4}
       +Ce^{-cn/t^2}+C r^3|t|^6/n^4,
\]
where \eqref{new:quadratic-tail} controls
$t^2\|\widetilde C\|^2>c_0n$.
For an independent standard Gaussian vector $\gamma$,
\[
 \mathbb E e^{-t^2\widetilde C^\top Q\widetilde C/4}
 =\mathbb E_\gamma
   |\varphi_C(tQ^{1/2}\gamma/\sqrt2)|^2.
\]
Since $\|Q\|\le1$ and $\tr Q\le r$,
\[
 \mathbb E e^{\gamma^\top Q\gamma/4}
 =\det(I-Q/2)^{-1/2}\le e^{r/2}.
\]
Consequently
$\mathbb P\{t^2\gamma^\top Q\gamma/2>c_0n\}
\le e^{r/2-c_0n/(2t^2)}\le e^{-cn/t^2}$.
On the complementary event apply the squared second bound in
\eqref{new:local-cf}. Real Gaussian integration now gives
\[
 \mathbb E_\gamma|\varphi_C(tQ^{1/2}\gamma/\sqrt2)|^2
 \le\det\left(I+\frac{t^2}{2}Q^{1/2}RQ^{1/2}\right)^{-1/2}
        +e^{-cn/t^2}.
\]
Combining the last two bounds and taking square roots proves
\eqref{new:shifted-cf}. All Gaussian randomization integrands are
bounded, so their use of Fubini's theorem requires no further moments.
\end{proof}

In the subsequent lemmas we use $h(t)=n/(t^2+|t|)$.
The stronger restriction $h(t)\ge C(r+1)$ implies the domain
of \eqref{new:shifted-cf}, and its exponential error is bounded
by $Ce^{-ch(t)}$. The polynomial error is retained explicitly.

\begin{lemma}\label{lem:partition}
Assume \textup{(A1)}, $\|\Sigma\|=1$, and write
$r=\operatorname{tr}\Sigma$. Let $W$ be a mixed sum with at most two
indices omitted. Set $L=14$ and use $\varepsilon$ from
\eqref{eq:A-margin-parameters}. There is a constant $C$, depending only
on $C_1,\delta$, such that, if $n\ge Cr$, its remaining
indices admit a deterministic partition into $L$ classes whose
covariance matrices $Q_1,\ldots,Q_L$ satisfy
\begin{equation}\label{moderate:HS-partition}
 0\preceq Q_\ell\preceq\Sigma,\qquad
 \|LQ_\ell-\Sigma\|_{\mathrm{HS}}\le\varepsilon,
 \qquad 1\le\ell\le L.
\end{equation}
Here $\|M\|_{\mathrm{HS}}^2=\operatorname{tr}(M^\top M)$ is the
Hilbert--Schmidt norm. The partition depends only on the covariance
matrices and the omitted indices, so it works for every choice of
original or Gaussian replacement increments on the remaining indices.
\end{lemma}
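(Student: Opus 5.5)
The statement to prove is Lemma~\ref{lem:partition}. I would build the partition by a greedy balancing argument, as in the classical partition lemmas of \citet{GoetzeZaitsev2014}, but measured in Hilbert--Schmidt norm rather than through determinants.

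The remaining indices $i$ carry covariances $\Sigma_i$. By \textup{(A1)}, $\|\Sigma_i\|\le C_1/n$ and $\tr\Sigma_i\le C_1r/n$, so
\[
 \|\Sigma_i\|_{\mathrm{HS}}^2\le\|\Sigma_i\|\tr\Sigma_i\le C_1^2r/n^2 .
\]
The sum over the remaining indices equals $\Sigma$ minus at most two omitted covariances, and it differs from $\Sigma$ in HS norm by at most $2C_1\sqrt r/n$.

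\textbf{Step 1 (random assignment).} Assign each remaining index independently and uniformly to one of the $L$ classes. Let $Q_\ell=\sum_{i\in I_\ell}\Sigma_i$. Then
\[
 LQ_\ell-\Sigma=\sum_i(L\mathbf 1_{i\in I_\ell}-1)\Sigma_i+O_{\mathrm{HS}}(\sqrt r/n).
\]
The random part is a sum of independent mean-zero matrices. Orthogonality of independent centered terms in the HS inner product gives
\[
 \mathbb E\Bigl\|\sum_i(L\mathbf 1_{i\in I_\ell}-1)\Sigma_i\Bigr\|_{\mathrm{HS}}^2
 =(L-1)\sum_i\|\Sigma_i\|_{\mathrm{HS}}^2
 \le(L-1)\frac{C_1}{n}\sum_i\tr\Sigma_i\le\frac{LC_1r}{n}.
\]
Summing over $\ell$ and using Markov's inequality, some realization satisfies
\[
 \sum_\ell\|LQ_\ell-\Sigma\|_{\mathrm{HS}}^2\le\frac{CL^2C_1r}{n}+\frac{CL^2C_1^2r}{n^2}.
\]
Each class therefore has error at most $C\sqrt{r/n}$, which is at most $\varepsilon$ once $n\ge Cr$ with $C$ depending on $C_1,\delta$ (recall $L=14$ and $\varepsilon=\delta/(8v_0)$ are fixed). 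This existence argument yields a deterministic partition, for example the lexicographically first admissible one. It depends only on the $\Sigma_i$ and on the omitted set. It is therefore valid whether each index carries the original or the Gaussian increment, since replacements share covariances.

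\textbf{Step 2 (order bounds).} $Q_\ell\succeq0$ is trivial, and $Q_\ell\preceq\sum_i\Sigma_i\preceq\Sigma$ since every $\Sigma_i\succeq0$.

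\textbf{Main obstacle.} The only delicate point is that the variance bound must use $\|\Sigma_i\|_{\mathrm{HS}}^2\le\|\Sigma_i\|\tr\Sigma_i$, combining both parts of \textup{(A1)}. This gives a bound of order $r/n$, free of $p$. The cruder estimate $(\tr\Sigma_i)^2$ would only give $C_1^2r^2/n$, which would force the stronger condition $n\ge Cr^2$. The omission of two indices is harmless because it adds an $O(\sqrt r/n)$ deterministic error.
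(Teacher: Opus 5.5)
Your argument is correct and is essentially the paper's proof. Both use a uniform random assignment to the $L=14$ classes, the Hilbert--Schmidt variance identity with coefficient variance $L-1$, the bound $\tr(\Sigma_i^2)\le\|\Sigma_i\|\tr\Sigma_i\le C_1\tr\Sigma_i/n$, an $O(r/n^2)$ contribution from the at most two omitted covariances, and the first-moment method to fix a deterministic assignment once $n\ge Cr$. Two small differences: the paper handles the omitted part exactly, noting that the cross term vanishes since $\mathbb E(LQ_\ell-S)=0$, whereas you absorb it with the triangle inequality; and what you actually carry out is a random assignment, not the greedy balancing your opening sentence announces.
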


\begin{proof}
Let $\mathcal O$ be the omitted set, put
$E=\sum_{i\in\mathcal O}\Sigma_i$, and set $S=\Sigma-E$.
Assign each remaining index independently and uniformly to one of
$L$ classes, and let $Q_\ell$ be the corresponding random covariance
sums. For a fixed class, its centered fluctuation is
$LQ_\ell-S=\sum_{i\notin\mathcal O}(L\mathbf1_{\{i\text{ in class }\ell\}}-1)\Sigma_i$.
The scalar coefficients are independent, centered, and have variance
$L-1$. Expanding the squared Hilbert--Schmidt norm therefore gives
\begin{align}
 \mathbb E\sum_{\ell=1}^{L}\|LQ_\ell-\Sigma\|_{\mathrm{HS}}^2
 &=L(L-1)\sum_{i\notin\mathcal O}\operatorname{tr}(\Sigma_i^2)
       +L\operatorname{tr}(E^2)\notag\\
 &\le\frac{L(L-1)C_1r}{n}
       +\frac{4LC_1^2r}{n^2}.
 \label{moderate:HS-partition-expectation}
\end{align}
To verify the inequality, \textup{(A1)} and positivity imply
\[
 \operatorname{tr}(\Sigma_i^2)
 \le\frac{C_1}{n}\operatorname{tr}\Sigma_i,\qquad
 \sum_{i\notin\mathcal O}\operatorname{tr}\Sigma_i\le r.
\]
Also $\|E\|\le2C_1/n$ and $\tr E\le2C_1r/n$, so
$\operatorname{tr}(E^2)\le\|E\|\tr E\le4C_1^2r/n^2$.
The cross term with the deterministic matrix $E$ vanishes in
\eqref{moderate:HS-partition-expectation} because
$\mathbb E(LQ_\ell-S)=0$.

Since $r\ge1$, taking the constant in $n\ge Cr$ sufficiently large
makes the right side of \eqref{moderate:HS-partition-expectation} at most
$\varepsilon^2/2$. Hence there exists an assignment for which the sum
of the nonnegative squared errors is at most $\varepsilon^2$.
Each individual error is then at most $\varepsilon$, proving
\eqref{moderate:HS-partition}. Positivity and $Q_\ell\preceq\Sigma$
hold for every assignment.
\end{proof}

\begin{lemma}\label{lem:balanced-damping}
Let $\|\Sigma\|=1$, $v=\operatorname{tr}(\Sigma^2)\ge v_0$, and $L=14$.
Suppose $Q,R$ are positive semidefinite matrices satisfying
$\|LQ-\Sigma\|_{\mathrm{HS}},\|LR-\Sigma\|_{\mathrm{HS}}\le\varepsilon$,
with the parameters in \eqref{eq:A-margin-parameters}. Then
\begin{equation}\label{moderate:balanced-determinant}
 \det\left(I+\frac{t^2}{2}Q^{1/2}RQ^{1/2}\right)^{-1/4}
 \le(1+c_*t^2)^{-\kappa v/4},\qquad t\in\mathbb R.
\end{equation}
In particular, Lemma~\ref{lem:A.3} gives, for $q=1,\ldots,3$,
\begin{equation}\label{moderate:balanced-integrals}
 \int_0^\infty t^{q-1}(1+c_*t^2)^{-\kappa v/4}\,dt
 \le C_{q,\delta}v^{-q/2}.
\end{equation}
\end{lemma}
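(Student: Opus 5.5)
The plan is to diagonalize the positive semidefinite matrix $M=Q^{1/2}RQ^{1/2}$ and compare $\det(I+\tfrac{t^2}{2}M)$ with a product of factors $(1+c_*t^2)^{y_j}$ via the concavity argument already used in Lemma~\ref{lem:A.3}. The required inputs are an upper bound on $\|M\|$ and a lower bound on $\tr M$ in terms of $v$. Both should follow from the Hilbert--Schmidt closeness of $LQ$ and $LR$ to $\Sigma$.

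\emph{Step 1: bounds on $M$.} Write $\mu_j\ge0$ for the eigenvalues of $M$. Since $\|LQ-\Sigma\|\le\|LQ-\Sigma\|_{\mathrm{HS}}\le\varepsilon$ and $\|\Sigma\|=1$, we get $\|LQ\|,\|LR\|\le1+\varepsilon$. Hence
\[
 L^2\mu_j\le L^2\|Q\|\,\|R\|\le(1+\varepsilon)^2 .
\]
For the trace, $L^2\tr M=\tr(LQ\cdot LR)$. Expand $LQ=\Sigma+E_Q$ and $LR=\Sigma+E_R$, and use Cauchy--Schwarz in the Hilbert--Schmidt inner product with $\|\Sigma\|_{\mathrm{HS}}=\sqrt v$. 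This gives
\[
 L^2\tr M\ge v-2\varepsilon\sqrt v-\varepsilon^2 .
\]

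\emph{Step 2: concavity.} Put $y_j=L^2\mu_j/(1+\varepsilon)^2\in[0,1]$. Then
\[
 \tfrac{t^2}{2}\mu_j=c\,t^2y_j,\qquad c=\frac{(1+\varepsilon)^2}{2L^2}\ge\frac1{2L^2}\ge c_*=\frac1{8L^2}.
\]
Concavity gives $\log(1+ct^2y)\ge y\log(1+ct^2)\ge y\log(1+c_*t^2)$ for $0\le y\le1$. Summing over $j$,
\[
 \det\Bigl(I+\tfrac{t^2}{2}M\Bigr)^{-1/4}\le(1+c_*t^2)^{-\frac14\sum_jy_j}.
\]
It therefore suffices to show $\sum_jy_j\ge\kappa v=(1-4\varepsilon)v$.

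\emph{Step 3: the numerical check (main obstacle, though elementary).} We need
\[
 (v-2\varepsilon\sqrt v-\varepsilon^2)\ge(1-4\varepsilon)(1+\varepsilon)^2v .
\]
Since $\varepsilon<1/8$, we have $(1+\varepsilon)^2\le1+3\varepsilon$ and $(1-4\varepsilon)(1+3\varepsilon)\le1-\varepsilon$. So it is enough that $\varepsilon v\ge2\varepsilon\sqrt v+\varepsilon^2$, that is, $v-2\sqrt v\ge\varepsilon$. This holds for $v\ge v_0=13/2$, because there $v-2\sqrt v>1>\varepsilon$. This proves \eqref{moderate:balanced-determinant}. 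The integral bound \eqref{moderate:balanced-integrals} then follows directly from \eqref{eq:A.3} of Lemma~\ref{lem:A.3} with $c=c_*$; the constant depends only on $q$ and $\delta$, since $c_*$ is numerical.
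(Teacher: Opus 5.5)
Your proposal is correct and follows the paper's proof essentially step for step. It uses the same operator-norm bound $(1+\varepsilon)^2/L^2$ on $Q^{1/2}RQ^{1/2}$, the same Hilbert--Schmidt Cauchy--Schwarz trace bound, and the same concavity comparison with $c_*\le(1+\varepsilon)^2/(2L^2)$. The only difference is in the final numerical check. The paper needs only $v\ge1$: it bounds the numerator below by $v(1-2\varepsilon-\varepsilon^2)$ and notes that this exceeds $(1-4\varepsilon)(1+\varepsilon)^2v$ by $(6\varepsilon^2+4\varepsilon^3)v\ge0$. You instead use $v\ge v_0=13/2$, which is also valid under the lemma's hypotheses.
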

\begin{proof}
Write $LQ=\Sigma+F$, $LR=\Sigma+G$, and $K=Q^{1/2}RQ^{1/2}$.
The norm bounds and the Hilbert--Schmidt Cauchy--Schwarz inequality give
\[
 \|K\|\le b_0:=\frac{(1+\varepsilon)^2}{L^2},\qquad
 \operatorname{tr}K=\operatorname{tr}(QR)
 \ge\frac{v-2\varepsilon\sqrt v-\varepsilon^2}{L^2}.
\]
The trace estimate follows by expanding
$L^2\operatorname{tr}(QR)=v+\operatorname{tr}(\Sigma F)
+\operatorname{tr}(\Sigma G)+\operatorname{tr}(FG)$.
Since $v\ge1$,
\[
 \frac{\operatorname{tr}K}{b_0}
 \ge v\frac{1-2\varepsilon-\varepsilon^2}{(1+\varepsilon)^2}
 \ge(1-4\varepsilon)v=\kappa v.
\]
For the second inequality, subtracting
$(1-4\varepsilon)(1+\varepsilon)^2$ from the numerator gives
$6\varepsilon^2+4\varepsilon^3\ge0$.
Apply concavity to the eigenvalues $\lambda_j\in[0,b_0]$ of $K$:
\[
 \sum_j\log(1+t^2\lambda_j/2)
 \ge\frac{\operatorname{tr}K}{b_0}\log(1+b_0t^2/2)
 \ge\kappa v\log(1+c_*t^2),
\]
where $b_0/2\ge c_*$. Exponentiation proves
\eqref{moderate:balanced-determinant}; \eqref{moderate:balanced-integrals}
is \eqref{eq:A.3} with $c=c_*$.
\end{proof}

\begin{lemma}\label{lem:prefix-order}
Assume \textup{(A1)}, let $\|\Sigma\|=1$, and put
$r=\operatorname{tr}\Sigma$, $v=\operatorname{tr}(\Sigma^2)$. There is a deterministic ordering of
the increments such that, after relabeling and writing
$Q_k=\sum_{i\le k}\Sigma_i$,
\begin{equation}\label{moderate:prefix-order}
 \max_{0\le k\le n}
 \left\|Q_k-\frac{k}{n}\Sigma\right\|_{\mathrm{HS}}
 \le 4\sqrt{\frac{C_1r}{n}}.
\end{equation}
The ordering depends only on the covariance matrices. Define
\begin{equation}\label{moderate:prefix-error}
 \eta_n=4\sqrt{C_1r/n}+\frac{2C_1\sqrt r}{n}
 \le6\sqrt{C_1r/n},\qquad n\ge C_1.
\end{equation}
For this order, every covariance $Q_k$ with at most two summands
removed differs from $(k/n)\Sigma$ in Hilbert--Schmidt norm by at
most $\eta_n$. The covariance of the original suffix
$\sum_{\ell>k+1}\boldsymbol X_\ell$ differs from
$(1-k/n)\Sigma$ by at most the same amount.
\end{lemma}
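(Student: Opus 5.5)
The plan is to recast the ordering problem as a vector-balancing problem in the Hilbert space of symmetric matrices with the Hilbert--Schmidt inner product $\langle A,B\rangle=\tr(AB)$, and to build the order greedily. Put $w_i=\Sigma_i-\Sigma/n$. Then $\sum_iw_i=0$ and
\[
 Q_k-\tfrac kn\Sigma=\sum_{i\le k}w_i=:P_k .
\]
So the statement reduces to ordering the $w_i$ so that every partial sum is small. For that we only need a bound on $\sum_i\|w_i\|_{\mathrm{HS}}^2$, which comes from (A1) and positivity. Since $\Sigma_i\succeq0$,
\[
 \|\Sigma_i\|_{\mathrm{HS}}^2=\tr(\Sigma_i^2)\le\|\Sigma_i\|\tr\Sigma_i\le\frac{C_1}{n}\tr\Sigma_i .
\]
Also $\|\Sigma\|_{\mathrm{HS}}^2=v\le r$, so $\|w_i\|_{\mathrm{HS}}^2\le 2C_1\tr\Sigma_i/n+2r/n^2$. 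Summing over $i$ and using $\sum_i\tr\Sigma_i=r$ and $C_1\ge1$ gives $\sum_i\|w_i\|_{\mathrm{HS}}^2\le 4C_1r/n$.

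The order is built one index at a time. Suppose $k$ indices have been chosen, with current partial sum $P_k$, and let $\mathcal R$ be the set of the $m=n-k\ge1$ remaining indices. Because the $w_i$ sum to zero, $\sum_{i\in\mathcal R}w_i=-P_k$. Hence
\[
 \frac1m\sum_{i\in\mathcal R}\langle P_k,w_i\rangle=-\frac{\|P_k\|_{\mathrm{HS}}^2}{m}\le0,
\]
so some remaining index $i$ has $\langle P_k,w_i\rangle\le0$, and we take it next. Expanding the square then gives
\[
 \|P_{k+1}\|_{\mathrm{HS}}^2\le\|P_k\|_{\mathrm{HS}}^2+\|w_i\|_{\mathrm{HS}}^2 .
\]
By induction, $\|P_k\|_{\mathrm{HS}}^2\le\sum_i\|w_i\|_{\mathrm{HS}}^2\le4C_1r/n$ for every $k$. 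This yields \eqref{moderate:prefix-order} with constant $2\le4$. The ordering depends only on the matrices $\Sigma_i$, as required.

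The remaining claims follow from the triangle inequality.

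\emph{Removing summands.} By (A1), each single covariance satisfies
\[
 \|\Sigma_i\|_{\mathrm{HS}}\le\sqrt{C_1\tr\Sigma_i/n}\le\sqrt{C_1\cdot C_1r/n^2}=C_1\sqrt r/n .
\]
Removing at most two summands from $Q_k$ therefore changes it by at most $2C_1\sqrt r/n$ in Hilbert--Schmidt norm. This gives the first term bound behind $\eta_n$.

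\emph{The suffix.} The covariance of $\sum_{\ell>k+1}\boldsymbol X_\ell$ is $\Sigma-Q_{k+1}$. Its deviation from $(1-k/n)\Sigma$ equals $-(Q_k-\tfrac kn\Sigma)-\Sigma_{k+1}$, which is again bounded by $\eta_n$.

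\emph{The bound on $\eta_n$.} For $n\ge C_1$ we have $C_1\sqrt r/n\le\sqrt{C_1r/n}$. Hence $\eta_n\le4\sqrt{C_1r/n}+2\sqrt{C_1r/n}=6\sqrt{C_1r/n}$, which is \eqref{moderate:prefix-error}.

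The only real obstacle is that the bound must hold uniformly in $k$. A random permutation would control a fixed $k$ in expectation but would need a maximal inequality over all $k$. The greedy nonpositive-inner-product choice avoids this, and the rest is bookkeeping.
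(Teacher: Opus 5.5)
Your proof is correct, and it reaches the lemma by a different route than the paper. The paper uses the probabilistic method. It takes a uniformly random permutation and observes that, under sampling without replacement, $T_k/(n-k)$ is a martingale with values in the Hilbert space of symmetric matrices. It applies Doob's $L^2$ maximal inequality on the first $\lfloor n/2\rfloor$ positions, then again on the reversed permutation, using $T_n=0$. This gives $\mathbb E\max_k\|T_k\|_{\mathrm{HS}}^2\le16\sum_i\|C_i\|_{\mathrm{HS}}^2$, so some permutation achieves the constant $4$. You replace all of this with a deterministic greedy (Steinitz-type) selection. Because the remaining $w_i$ sum to $-P_k$, some remaining index has $\langle P_k,w_i\rangle\le0$. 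Choosing it gives $\|P_{k+1}\|_{\mathrm{HS}}^2\le\|P_k\|_{\mathrm{HS}}^2+\|w_i\|_{\mathrm{HS}}^2$, so uniformity in $k$ comes for free, with no maximal inequality and no forward/backward split. Your route is shorter and constructive, and it gives a better constant. If you also used the exact identity from the paper, $\sum_i\|w_i\|_{\mathrm{HS}}^2=\sum_i\tr(\Sigma_i^2)-v/n\le C_1r/n$ (expand the square using $\sum_i\Sigma_i=\Sigma$), you would get $\max_k\|P_k\|_{\mathrm{HS}}\le\sqrt{C_1r/n}$ instead of your cruder $2\sqrt{C_1r/n}$. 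The paper's route yields a statement about typical orders: by Markov's inequality, at least half of all permutations satisfy the bound with a somewhat larger constant. The lemma and its later uses only need existence, though. Your handling of the removed summands, the suffix covariance, and the bound $\eta_n\le6\sqrt{C_1r/n}$ for $n\ge C_1$ matches the paper's.
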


\begin{proof}
It suffices to consider $n\ge2$. Put $C_i=\Sigma_i-\Sigma/n$.
Then $\sum_i C_i=0$, and positivity and \textup{(A1)} give
\[
 \sum_i\|C_i\|_{\mathrm{HS}}^2
 =\sum_i\operatorname{tr}(\Sigma_i^2)-v/n
 \le \frac{C_1r}{n}.
\]
For a uniform random permutation $\pi$, set
$T_k=\sum_{i\le k}C_{\pi(i)}$ and let $\mathcal F_k$ be generated
by its first $k$ entries. Sampling without replacement gives
\[
 \mathbb E\|T_k\|_{\mathrm{HS}}^2
 =\frac{k(n-k)}{n(n-1)}
       \sum_i\|C_i\|_{\mathrm{HS}}^2.
\]
Moreover, $T_k/(n-k)$, $0\le k<n$, is a martingale with values in
the Hilbert space of symmetric matrices: conditionally on
$\mathcal F_k$, the next matrix has mean $-T_k/(n-k)$.
For $m=\lfloor n/2\rfloor$, Doob's $L^2$ inequality therefore yields
\begin{align*}
 \mathbb E\max_{0\le k\le m}\|T_k\|_{\mathrm{HS}}^2
 &\le \frac{4n^2}{(n-m)^2}\mathbb E\|T_m\|_{\mathrm{HS}}^2\\
 &=\frac{4nm}{(n-1)(n-m)}
       \sum_i\|C_i\|_{\mathrm{HS}}^2
 \le8\sum_i\|C_i\|_{\mathrm{HS}}^2.
\end{align*}
Apply the same argument to the first $\lfloor n/2\rfloor$ entries
of the reversed permutation. Since $T_n=0$, these reversed sums
cover all the remaining prefixes, including when $n$ is odd.
Adding the two maximal bounds shows that
\[
 \mathbb E\max_{0\le k\le n}\|T_k\|_{\mathrm{HS}}^2
 \le16\sum_i\|C_i\|_{\mathrm{HS}}^2
 \le\frac{16C_1r}{n}.
\]
At least one permutation satisfies \eqref{moderate:prefix-order}.
Finally, \textup{(A1)} implies
$\|\Sigma_i\|_{\mathrm{HS}}\le C_1\sqrt r/n$.
Deleting at most two summands, or subtracting the first omitted
index from a suffix, adds at most $2C_1\sqrt r/n$ to the prefix
error. The last inequality in \eqref{moderate:prefix-error} follows
when $n\ge C_1$.
\end{proof}

\begin{lemma}\label{lem:prefix-damping}
Assume \textup{(A1)--(A2)}, $\|\Sigma\|=1$, and $v\ge v_0$.
Use the parameters $\varepsilon,\kappa,c_*$ in
\eqref{eq:A-margin-parameters}, with $L=14$ and
$c_*=1/(8L^2)$. Let $0\le\alpha\le1$. Suppose that $Q$ is a
positive semidefinite matrix and $W$ is a sum over a subset of the
original increments, with covariance $S$, such that
\begin{equation}\label{moderate:partial-covariances}
 Q,S\preceq\Sigma,
 \qquad
 \|Q-\alpha\Sigma\|_{\mathrm{HS}}\le\eta_n,
 \qquad
 \|S-(1-\alpha)\Sigma\|_{\mathrm{HS}}\le\eta_n.
\end{equation}
There is a constant $C$, depending only on the assumption constants
and $\delta$, for which the following statements hold when $n\ge Cr$.
\begin{enumerate}
\item If $\alpha\le1/2$, the indices in $W$ admit a deterministic
partition into fourteen classes, paired into seven independent
blocks. Every block $B$ satisfies
\[
 \sup_{z\in\mathbb R^p}
 \left|\mathbb E e^{it\|B+z\|^2/2}\right|
 \le H(t),\qquad h(t)\ge C(r+1),
\]
where
\begin{equation}\label{moderate:H-balanced}
 H(t)=(1+c_*t^2)^{-\kappa v/4}+C e^{-c h(t)}
       +C\frac{r^{3/2}|t|^3}{n^2},
 \qquad h(t)=\frac{n}{t^2+|t|}.
\end{equation}
\item If $\alpha\ge1/2$, then
\begin{equation}\label{moderate:prefix-determinant}
 \det(I+t^2Q^2)^{-1/4}
 \le(1+c_*t^2)^{-\kappa v/4},\qquad t\in\mathbb R.
\end{equation}
\end{enumerate}
\end{lemma}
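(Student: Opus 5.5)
For part 1 ($\alpha\le1/2$), the plan is to apply Lemma~\ref{lem:partition} to the sum $W$ and then bound each resulting block by Lemma~\ref{lem:shifted-cf}. The covariance of $W$ is $S$, with $S\approx(1-\alpha)\Sigma$ and $1-\alpha\ge1/2$. Lemma~\ref{lem:partition} is stated for the full index set with the target $\Sigma$, so I would rerun its random-assignment argument with target $S$ in place of $\Sigma$. The same second-moment computation gives fourteen classes with covariances $Q_\ell$ satisfying $\|14Q_\ell-S\|_{\mathrm{HS}}\le\varepsilon/2$ for $n\ge Cr$, since the expected error is still $O(r/n)$.

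Next I would pair the classes into seven blocks $B=A+C$ with independent class sums $A,C$. Their covariances are $Q,R\preceq S\preceq\Sigma$, as required. Lemma~\ref{lem:shifted-cf} then gives, uniformly in $z$,
\[
 \left|\mathbb E e^{it\|B+z\|^2/2}\right|
 \le\det\left(I+\tfrac{t^2}{2}Q^{1/2}RQ^{1/2}\right)^{-1/4}
   +Ce^{-cn/t^2}+C\frac{r^{3/2}|t|^3}{n^2}.
\]
The condition $h(t)\ge C(r+1)$ implies $n/t^2\ge C(r+1)$, and the inequality $e^{-cn/t^2}\le e^{-ch(t)}$ converts the exponential term into the form in \eqref{moderate:H-balanced}.

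The main obstacle is the determinant, because $LQ$ is close to $S$ rather than to $\Sigma$. I would redo the proof of Lemma~\ref{lem:balanced-damping} with $\Sigma$ replaced by $S$. This gives $\|K\|\le\|S\|^2(1+\varepsilon)^2/L^2$ and $\operatorname{tr}K\ge(\operatorname{tr}S^2-2\varepsilon\sqrt{\operatorname{tr}S^2}-\varepsilon^2)/L^2$. The ratio $\operatorname{tr}K/\|K\|$ is scale-invariant, and $\operatorname{tr}(S^2)/\|S\|^2\ge v-O(\eta_n\sqrt v)$ because $\|S-(1-\alpha)\Sigma\|_{\mathrm{HS}}\le\eta_n$ with $1-\alpha\ge1/2$. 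The concavity step then yields
\[
 \sum_j\log\left(1+t^2\lambda_j/2\right)
 \ge\kappa' v\log\left(1+t^2\|K\|/2\right).
\]
Here $\|K\|\ge c(1-\alpha)^2/L^2\ge 1/(4L^2)$, since $\|S\|\ge(1-\alpha)-\eta_n$, so that $\|K\|/2\ge c_*$.

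To recover exactly the exponent $\kappa v$, I would budget the accuracy. The partition accuracy $\varepsilon/2$ and the $\eta_n$ losses, which are $o(1)$ once $n\ge Cr$, together must fit inside the slack $6\varepsilon^2+4\varepsilon^3$ of Lemma~\ref{lem:balanced-damping}. If that slack is insufficient, I would tighten the partition accuracy to a smaller fixed multiple of $\varepsilon$.

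For part 2 ($\alpha\ge1/2$), I would apply concavity directly to the eigenvalues $\mu_j\in[0,1]$ of $Q^2$, using $\|Q\|\le1$:
\[
 \sum_j\log\left(1+t^2\mu_j\right)\ge\operatorname{tr}(Q^2)\log(1+t^2).
\]
The Hilbert--Schmidt bound gives $\sqrt{\operatorname{tr}Q^2}\ge\alpha\sqrt v-\eta_n$, hence $\operatorname{tr}Q^2\ge v/4-O(\eta_n\sqrt v)$. This is too weak, because the factor $\alpha^2\ge1/4$ appears where the full $v$ is needed.

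The fix is to use the same scale normalization as in part 1: $\sum_j\log(1+t^2\mu_j)\ge\frac{\operatorname{tr}Q^2}{\|Q\|^2}\log(1+t^2\|Q\|^2)$. We have $\operatorname{tr}Q^2/\|Q\|^2\ge v(1-O(\eta_n))\ge\kappa v$ for $n\ge Cr$, and $\|Q\|^2\ge(1/2-\eta_n)^2\ge c_*$. Combining these gives \eqref{moderate:prefix-determinant}.
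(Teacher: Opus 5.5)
Your route is the paper's: the random fourteen-class assignment with the centered Hilbert--Schmidt second-moment bound, pairing into seven blocks, Lemma~\ref{lem:shifted-cf} with $e^{-cn/t^2}\le e^{-ch(t)}$, and logarithmic concavity. Your part~2 is the paper's argument with the scale $\|Q\|^2$ in place of the upper bound $\alpha^2(1+\varepsilon)^2$, and it is correct, because there $\|Q\|^2\approx 1/4$ lies far above $c_*$.

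\emph{The gap.} The step that fails is the scale comparison in part~1, namely $\|K\|\ge c(1-\alpha)^2/L^2\ge 1/(4L^2)$, which you need for $\|K\|/2\ge c_*$. Write $\beta=1-\alpha$. At $\beta=1/2$ one has $c_*=1/(8L^2)=\beta^2/(2L^2)$, so this inequality has no slack. The hypotheses only give $\|S\|\ge\tfrac12-\eta_n$ when $\alpha=1/2$, and they allow equality; for example, $S=\Sigma/2-\eta_n e_1e_1^\top$ with $s_2\le 1-2\eta_n$ is admissible. Even for an exact partition $LQ=LR=S$ one gets $\|K\|=\|S\|^2/L^2<1/(4L^2)$. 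The final comparison $\log(1+t^2\|K\|/2)\ge\log(1+c_*t^2)$ then fails. Tightening the partition accuracy, as you propose, improves only the exponent, not this scale. Rescuing your chain would require a quantitative lower bound on $\|K\|$ traded against exponent slack, and the proposal does not supply one.

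\emph{The repair.} Apply concavity with an a priori upper bound $b\ge\max_j\lambda_j(K)$ rather than with $\|K\|$ itself, using $\log(1+t^2\lambda/2)\ge(\lambda/b)\log(1+t^2b/2)$ for $0\le\lambda\le b$. The paper gets such a $b$ by measuring the partition error against $\beta\Sigma$ instead of $S$. The triangle inequality gives $\|LS_\ell-\beta\Sigma\|_{\mathrm{HS}}\le\eta_n+\sqrt{L(L-1)C_1r/n}\le\beta\varepsilon$ for $n\ge Cr$. Hence $LS_\ell=\beta(\Sigma+F)$ with $\|F\|_{\mathrm{HS}}\le\varepsilon$, and the algebra of Lemma~\ref{lem:balanced-damping} applies verbatim with $b_0=\beta^2(1+\varepsilon)^2/L^2$. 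This yields $\tr K/b_0\ge\kappa v$ and $b_0/2\ge\beta^2/(2L^2)\ge c_*$, with no extra budgeting. It also removes your renormalization by $\|S\|$. Incidentally, that renormalization loses a relative $O(\eta_n)$, i.e.\ $O(\eta_n v)$ rather than $O(\eta_n\sqrt v)$, though this would still have been absorbable.
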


\begin{proof}
For the first assertion, write $\beta=1-\alpha\ge1/2$.
Assign the indices of $W$ independently and uniformly to $L=14$
classes, whose covariance sums are $S_1,\ldots,S_L$. The centered
Hilbert--Schmidt calculation used in Lemma~\ref{lem:partition} gives
\[
 \mathbb E\sum_{\ell=1}^L\|LS_\ell-S\|_{\mathrm{HS}}^2
 =L(L-1)\sum_{i\text{ in }W}\operatorname{tr}(\Sigma_i^2)
 \le\frac{L(L-1)C_1r}{n}.
\]
There is consequently an assignment with
\[
 \|LS_\ell-\beta\Sigma\|_{\mathrm{HS}}
 \le\eta_n+\sqrt{L(L-1)C_1r/n}
 \le\beta\varepsilon,\qquad 1\le\ell\le L,
\]
after increasing the constant in $n\ge Cr$.
Consider two classes with covariances $S_\ell,S_{\ell'}$ and put
$K=S_\ell^{1/2}S_mS_\ell^{1/2}$. Write
$LS_\ell=\beta(\Sigma+F)$ and
$LS_m=\beta(\Sigma+F')$, where
$\|F\|_{\mathrm{HS}},\|F'\|_{\mathrm{HS}}\le\varepsilon$.
Then
\[
 \|K\|\le b_0:=\frac{\beta^2(1+\varepsilon)^2}{L^2},
 \qquad
 \operatorname{tr}K
 \ge\frac{\beta^2}{L^2}
        (v-2\varepsilon\sqrt v-\varepsilon^2).
\]
The algebra in Lemma~\ref{lem:balanced-damping} gives
$\operatorname{tr}K/b_0\ge\kappa v$. Concavity of the logarithm
and $b_0/2\ge1/(8L^2)=c_*$ imply
\[
 \det(I+t^2K/2)^{-1/4}
 \le(1+c_*t^2)^{-\kappa v/4}.
\]
Pair the fourteen classes into seven blocks and apply
Lemma~\ref{lem:shifted-cf} to each pair. Its covariance matrices are
bounded by $\Sigma$, so the constants in both error terms are
uniform. This proves the first assertion.

For the second, $\|Q/\alpha-\Sigma\|_{\mathrm{HS}}\le2\eta_n
\le\varepsilon$ after the same enlargement of $C$. Hence
\[
 \|Q/\alpha\|\le1+\varepsilon,
 \qquad
 \|Q/\alpha\|_{\mathrm{HS}}\ge\sqrt v-\varepsilon.
\]
Because $v\ge1$,
\[
 \frac{\operatorname{tr}((Q/\alpha)^2)}{(1+\varepsilon)^2}
 \ge v\frac{(1-\varepsilon)^2}{(1+\varepsilon)^2}
 \ge(1-4\varepsilon)v=\kappa v.
\]
Apply logarithmic concavity to the eigenvalues of $Q^2$, whose
maximum is at most $\alpha^2(1+\varepsilon)^2$. Since
$\alpha\ge1/2$, the resulting logarithm is at least
$\log(1+t^2/4)\ge\log(1+c_*t^2)$.
This proves \eqref{moderate:prefix-determinant}.
\end{proof}

\begin{lemma}\label{lem:unused-block}
Under the conditions of Lemma~\ref{lem:prefix-damping}, let
$G_Q\sim N(0,Q)$ be independent of $W$ and put
$\Phi_t(x)=e^{it\|x\|^2/2}$. Define
\begin{equation}\label{moderate:resolvent-factor}
 b_0(t)=1,
 \qquad
 b_\alpha(t)=\min\left\{1,
              \sqrt{\frac{3}{2\alpha|t|}}\right\},
 \quad \alpha>0, t\ne0.
\end{equation}
If $n\ge Cr$ and $h(t)\ge C(r+1)$, then for $0\le k\le4$ and fixed real
vectors $u_1,\ldots,u_k,z$,
\begin{equation}\label{moderate:derivative-damping}
 \begin{split}
 &\left|\mathbb E D^k\Phi_t(G_Q+W+z)[u_1,\ldots,u_k]\right|\\
 &\quad\le C_kH(t)\prod_{\ell=1}^k\|u_\ell\|
       \sum_{b=0}^{\lfloor k/2\rfloor}|t|^{k-b}
          \{b_\alpha(t)+\|z\|\}^{k-2b}.
 \end{split}
\end{equation}
For $k=0$, the product is empty and equals one. In particular, taking
$Q=0$, $W=\boldsymbol S_n$, and $\alpha=0$ gives
\begin{equation}\label{moderate:unweighted-damping}
 \sup_{z\in\mathbb R^p}
 \left|\mathbb E e^{it\|\boldsymbol S_n+z\|^2/2}\right|
 \le CH(t).
\end{equation}
\end{lemma}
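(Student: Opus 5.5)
The proof will compute the derivatives of $\Phi_t$ explicitly and average over the Gaussian prefix $G_Q$ in closed form, so that only the characteristic function of the independent block $W$ remains. Since $\Phi_t(x)=e^{it\|x\|^2/2}$, one has
\[
 D^k\Phi_t(x)[u_1,\ldots,u_k]=\Phi_t(x)\sum_{\pi}(it)^{k-b(\pi)}\prod_{\{\ell\}\in\pi}(x^\top u_\ell)\prod_{\{\ell,m\}\in\pi}(u_\ell^\top u_m),
\]
where the sum runs over partitions $\pi$ of $\{1,\ldots,k\}$ into singletons and pairs, and $b(\pi)$ is the number of pairs. The pairs contribute $|t|^{b}\prod\|u\|$ with no dependence on the position. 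So the task is to bound
\[
\Bigl|\mathbb E\bigl[\Phi_t(G_Q+W+z)\prod_{\ell\in I}(G_Q+W+z)^\top u_\ell\bigr]\Bigr|
\]
by $C H(t)\prod_{\ell\in I}\|u_\ell\|\,\{b_\alpha(t)+\|z\|\}^{|I|}$. This accounts for the powers $|t|^{k-2b}\cdot|t|^{b}=|t|^{k-b}$ in \eqref{moderate:derivative-damping}.

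\emph{Case $\alpha\ge1/2$.} We condition on $W$ and put $y=W+z$, then integrate over $G_Q$ exactly. Completing the square gives
\[
\mathbb E_{G}\bigl[e^{it\|G+y\|^2/2}\bigr]=\det(I-itQ)^{-1/2}e^{it\, y^\top(I-itQ)^{-1}y/2}.
\]
Polynomial factors in $G$ are produced by differentiating in an auxiliary linear shift. Each factor $(G+y)^\top u$ becomes either $u^\top R y$, with $R=(I-itQ)^{-1}$ a contraction, or a Gaussian contraction $u^\top RQ u$-type term of size $\|u\|^2 b_\alpha(t)^2$; here $\|RQR^*\|\le\min(1,(\alpha|t|)^{-1})$ up to constants. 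The exponential factor has modulus at most $1$. The determinant factor has modulus $\det(I+t^2Q^2)^{-1/4}$, which is bounded by $(1+c_*t^2)^{-\kappa v/4}\le H(t)$ by Lemma~\ref{lem:prefix-damping}(2). The remaining factors $u^\top R(W+z)$ are controlled in $L^q$ by Lemma~\ref{lem:new-projection-moments}, which gives $\|u^\top RW\|_{L^q}\le C\|u\|b_\alpha(t)$ using $b_\alpha^4\ge r/n$, together with $|u^\top Rz|\le\|u\|\|z\|$. Hölder's inequality then yields the claim.

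\emph{Case $\alpha\le1/2$.} Here the damping comes from $W$ instead. We use the seven-block partition of Lemma~\ref{lem:prefix-damping}(1). Each polynomial factor $(G_Q+W+z)^\top u$ is expanded into its block components, so a monomial of degree at most $4$ involves at most four of the seven blocks. At least three blocks, and in particular at least one block $B$, appear in none of the factors. Conditioning on everything except $B$ and absorbing the rest into the shift, the supremum bound $\sup_{z'}|\mathbb E e^{it\|B+z'\|^2/2}|\le H(t)$ factors out. The remaining moments of the projections of $G_Q$, the other blocks, and $z$ are bounded by $C\|u\|(1+\|z\|)$ via Lemma~\ref{lem:new-projection-moments} with $R=I$, since $b_\alpha=1$ when $\alpha|t|\lesssim1$.

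\textbf{Main obstacle.} When $\alpha\le1/2$ but $\alpha|t|$ is large, the factor $b_\alpha(t)<1$ is required. In that regime I would still integrate $G_Q$ exactly, which produces the resolvent $R=(I-itQ)^{-1}$, and then use an untouched block of $W$ for the damping. This needs a shifted bound of the form of Lemma~\ref{lem:shifted-cf}, applied to $\Phi$ with the complex quadratic form $y^\top Ry$ in place of $\|y\|^2$. Checking that the symmetrization argument survives the complex resolvent, where $|e^{it y^\top R y/2}|$ is no longer identically $1$ but is at most $1$, is the delicate step.

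The case $k=0$ with $Q=0$ and $\alpha=0$ is exactly part (1) applied to $W=\boldsymbol S_n$, using one block and conditioning on the others. This gives \eqref{moderate:unweighted-damping}.
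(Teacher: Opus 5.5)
Your case $\alpha\ge1/2$ is essentially the paper's argument. Your case $\alpha\le1/2$ also works while $\alpha|t|\le3/2$, where $b_\alpha(t)=1$. The lemma's content, however, lies in the remaining regime $\alpha\le1/2$, $\alpha|t|>3/2$, and there the proposal has a genuine gap.

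Expanding the raw singleton factors $(G_Q+W+z)^\top u_\ell$ over blocks and bounding them with $R=I$ gives only $C\|u_\ell\|(1+\|z\|)$, so the factor $b_\alpha(t)$ is lost. This loss is not harmless: $b_\alpha$ is what makes the position sum in Lemma~\ref{lem:position-sums} of order $t^2+t^3$ rather than $t^4$. You recognize the problem and propose a shifted bound for the complex form $y^\top Ry$, but you leave it unproved. It is also not clear that the Cauchy--Schwarz/symmetrization step of Lemma~\ref{lem:shifted-cf} survives a phase of modulus less than one. That step uses the fact that the conjugate of a real phase is its inverse, so that the terms quadratic in the averaged summand cancel; with $R$ complex they leave $e^{-t\,x^\top(\operatorname{Im}R)x}$ behind.

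The missing idea is that no complex version is needed. Set $A=(I-itQ)^{-1}$. Differentiating $\mathbb E_{G_Q}\Phi_t(G_Q+x)=\det(I-itQ)^{-1/2}e^{itx^\top Ax/2}$ $k$ times in $x$ gives the same factor times a polynomial $P_{k,Q}(x)$. Its singletons are $it\,u_\ell^\top Ax$ and its pairs are $it\,u_\ell^\top Au_m$. Now put the factor back as $\mathbb E_{G_Q}\Phi_t(G_Q+x)$, which yields
\[
 \mathbb E D^k\Phi_t(G_Q+W+z)[u_1,\ldots,u_k]
 =\mathbb E\{\Phi_t(G_Q+W+z)P_{k,Q}(W+z)\}.
\]
Your own Wick computation already contains this: the mean $Ry$ and covariance $QR$ recombine with the raw pairs through $I+itQR=R$.

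In this identity the phase is the real one, and the polynomial contains the resolvent but no Gaussian variables. Expand each $u_\ell^\top A(W+z)$ over the seven blocks. An unused block then enters only through $\Phi_t$, with a real shift built from $G_Q$, the other blocks and $z$. So Lemma~\ref{lem:prefix-damping}(1) supplies $H(t)$ for every admissible $t$, and H\"older's inequality gives the factor $b_\alpha(t)$ from the resolvent projections.

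Finally, in both cases you need $\|A\Sigma A^*\|\le b_\alpha(t)^2$ as input to Lemma~\ref{lem:new-projection-moments}. A bound on $AQA^*$ alone does not control projections of $W$, whose covariance is compared with $\Sigma$, not $Q$. The needed bound follows from three facts: $\alpha\Sigma\preceq Q+\eta_nI$, $\|AQA^*\|\le1/(2|t|)$, and $|t|\eta_n\le1$. The last comes from $h(t)\ge C(r+1)$ and \eqref{moderate:prefix-error}.
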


\begin{proof}
By conjugation it suffices to consider $t>0$. Set $A=(I-itQ)^{-1}$.
We first show that
\begin{equation}\label{moderate:resolvent-bound}
 \|A\|\le1,
 \qquad \|A\Sigma A^*\|\le b_\alpha(t)^2.
\end{equation}
The first bound follows from the spectral decomposition of $Q$.
Since $\Sigma\preceq I$, it also gives
$A\Sigma A^*\preceq I$. For $\alpha>0$,
\eqref{moderate:partial-covariances} implies
$\alpha\Sigma\preceq Q+\eta_n I$. Furthermore,
\[
 \|AQA^*\|
 =\max_{\lambda\in\operatorname{spec}(Q)}
      \frac{\lambda}{1+t^2\lambda^2}
 \le\frac1{2t}.
\]
The condition on $h(t)$ gives
$t^2\le n/[C(r+1)]$. By $v\le r$ and
\eqref{moderate:prefix-error}, it follows, after increasing $C$, that
$t\eta_n\le6\sqrt{C_1/C}\le1$. Therefore
\[
 A\Sigma A^*
 \preceq\frac1\alpha(AQA^*+\eta_n AA^*)
 \preceq\frac{3}{2\alpha t}I.
\]
This proves \eqref{moderate:resolvent-bound}, including the case
$\alpha=0$ by the first bound alone.

Gaussian integration, valid also for singular $Q$, gives
\begin{equation}\label{moderate:gaussian-prefix-transform}
 \mathbb E_{G_Q}\Phi_t(G_Q+x)
 =\det(I-itQ)^{-1/2}
       \exp\{itx^\top A x/2\},\qquad x\in\mathbb R^p.
\end{equation}
The square root is chosen continuously from $t=0$.
The exponential on the right has modulus at most one.
Differentiating this identity shows that
\begin{equation}\label{moderate:gaussian-polynomial-identity}
 \begin{split}
 &\mathbb E D^k\Phi_t(G_Q+W+z)[u_1,\ldots,u_k]\\
 &\qquad=
 \mathbb E\{\Phi_t(G_Q+W+z)P_{k,Q}(W+z)\},
 \end{split}
\end{equation}
where $P_{k,Q}(x)$ is a sum over partitions of $\{1,\ldots,k\}$
into singletons and pairs. A singleton $\ell$ contributes
$it u_\ell^\top A x$; a pair $\{\ell,m\}$ contributes
$it u_\ell^\top A u_m$. In particular, $P_{k,Q}$ depends on $W+z$
but contains no Gaussian random variables. All differentiations are
justified by finite Gaussian moments and the fixed-order moments of
$W$. The identity uses only real Gaussian integration; no complex
randomization of the original increments is involved.

For a block $V$ of original increments whose covariance is bounded by $\Sigma$,
Lemma~\ref{lem:directional-moments} implies
\begin{equation}\label{moderate:resolvent-projection-moment}
 \|u^\top A V\|_{L^q}
 \le C_q\|u\|b_\alpha(t),\qquad 1\le q\le4.
\end{equation}
For complex coefficients, split the projection into real and
imaginary parts and apply the fourth-moment bound
\eqref{new:fourth-projection}. Lower orders follow by Lyapunov's
inequality. Equation~\eqref{moderate:resolvent-bound} then proves
\eqref{moderate:resolvent-projection-moment}.

Suppose first that $\alpha\le1/2$. Partition $W$ into the seven
blocks in Lemma~\ref{lem:prefix-damping}. Expand every singleton
factor in $P_{k,Q}(W+z)$ over these blocks and $z$.
Each resulting monomial involves at most $k\le4$ blocks, so at
least one block is absent. Condition on $G_Q$ and on all the other
blocks. The monomial is measurable under this conditioning, and the
remaining exponential in \eqref{moderate:gaussian-polynomial-identity}
has a real shift. The unused block therefore supplies the factor
$H(t)$. Apply H\"older's inequality and
\eqref{moderate:resolvent-projection-moment} to the remaining
singleton projections. A pair contributes at most
$t\|u_\ell\|\|u_m\|$, and a singleton at $z$ contributes at most
$t\|u_\ell\|\|z\|$. A partition with $b$ pairs consequently has
the bound displayed in \eqref{moderate:derivative-damping}.

If $\alpha\ge1/2$, integrate $G_Q$ in
\eqref{moderate:gaussian-polynomial-identity} first. The modulus of
its conditional expectation is at most
$\det(I+t^2Q^2)^{-1/4}$, uniformly in the real shift $W+z$.
Use \eqref{moderate:prefix-determinant} and then
\eqref{moderate:resolvent-projection-moment} directly on the
polynomial. This gives the same bound. The number of partitions with
$b$ pairs is $k!/[2^b b!(k-2b)!]$; these fixed numbers are absorbed
in $C_k$.
\end{proof}

\begin{lemma}\label{lem:position-sums}
Let $b_\alpha(t)$ be defined by
\eqref{moderate:resolvent-factor}, and put
\[
 F_4(\alpha,t)=t^2+t^3b_\alpha(t)^2+t^4b_\alpha(t)^4.
\]
For $0<t\le n$ and $\alpha_i=(i-1)/n$,
\begin{equation}\label{moderate:position-sums}
 \frac1n\sum_{i=1}^n F_4(\alpha_i,t)\le C(t^2+t^3),
\end{equation}
where $C$ is numerical.
\end{lemma}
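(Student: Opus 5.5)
The estimate is elementary. I would bound the three terms of $F_4$ separately, using only the pointwise form of $b_\alpha(t)$ from \eqref{moderate:resolvent-factor}. The first term contributes exactly $t^2$ after averaging. For the other two I would use
\[
 b_\alpha(t)^2=\min\Bigl\{1,\frac{3}{2\alpha t}\Bigr\}\le 1 .
\]
This gives $t^3b_\alpha^2\le t^3$ immediately, so the second term averages to at most $t^3$. The only real work is the quartic term $t^4b_\alpha(t)^4$: the trivial bound $b_\alpha\le1$ gives $t^4$, which is too large. Here the averaging over positions $\alpha_i=(i-1)/n$ must be used.

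For the quartic term I would write
\[
 t^4b_\alpha(t)^4=\min\Bigl\{t^4,\frac{9t^2}{4\alpha^2}\Bigr\},
\]
and split the indices at the threshold $\alpha_*=3/(2t)$, where the two expressions in the minimum of $b_\alpha^2$ coincide.

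For $i$ with $\alpha_i\le\alpha_*$, i.e.\ $i-1\le 3n/(2t)$, I would use the bound $t^4$. There are at most $1+3n/(2t)$ such indices, so after dividing by $n$ their contribution is at most $t^4/n+\tfrac32t^3$. Since $t\le n$, we have $t^4/n\le t^3$, and this part is $O(t^3)$. This is where the hypothesis $0<t\le n$ enters.

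For $i$ with $\alpha_i>\alpha_*$ (so in particular $i\ge2$), I would use $9t^2/(4\alpha_i^2)=9t^2n^2/(4(i-1)^2)$. After dividing by $n$ the contribution is
\[
 \frac{9t^2n}{4}\sum_{m>3n/(2t)}m^{-2}\le\frac{9t^2n}{4}\cdot\frac{C}{\max\{1,3n/(2t)\}}\le Ct^3,
\]
using $\sum_{m>M}m^{-2}\le C/\max(1,M)$. When $3n/(2t)<1$ the same sum is at most $\pi^2/6$, giving $Ct^2n\le Ct^3$, again because $t\le n$.

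Collecting the three terms gives $\frac1n\sum_iF_4(\alpha_i,t)\le C(t^2+t^3)$ with a numerical constant. The main obstacle is just the bookkeeping of this quartic sum, in particular the boundary regime $t\gtrsim n$, where the count of small-$\alpha$ indices is dominated by the single index $i=1$; there the restriction $t\le n$ is exactly what keeps $t^4/n$ of order $t^3$. The index $i=1$ ($\alpha=0$, $b_0=1$) is handled inside the first group and causes no division by zero.
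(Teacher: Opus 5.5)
Your proof is correct and follows essentially the paper's route: you split the quartic average at $i-1=3n/(2t)$ and use a square-summable tail. The one difference is that you bound the $t^3b_\alpha(t)^2$ term trivially by $t^3$, whereas the paper averages $b_\alpha^2$ to get $C(1+\log t)/t$ and then uses $\log t\le t$; both suffice for $C(t^2+t^3)$. There is one small slip: under $t\le n$ the case $3n/(2t)<1$ cannot occur, and if it did, $t^2n\le Ct^3$ would follow from $n<2t/3$, not from $t\le n$, which gives the reverse inequality.
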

\begin{proof}
For $t\le1$, use $b_\alpha(t)\le1$. For $1<t\le n$, put
$K=3n/(2t)$ and split the sums over $k=i-1$ at $K$. Since
$b_{k/n}(t)^2=\min(1,K/k)$ for $k>0$, the harmonic-series and
square-summable tails give
\[
 \frac1n\sum_{k=0}^{n-1}b_{k/n}(t)^2
 \le\frac{C(1+\log t)}{t},\qquad
 \frac1n\sum_{k=0}^{n-1}b_{k/n}(t)^4\le\frac Ct.
\]
The $k=0$ term is $1/n\le1/t$. If $K\ge n$, the same bounds
follow by summing ones, since then $t\le3/2$. Substitute these
estimates in $F_4$ and use $\log t\le t$ to conclude.
\end{proof}

\begin{lemma}\label{lem:third-moment-proxy}
Let $\boldsymbol X_i'$ be an independent copy of $\boldsymbol X_i$,
let $\eta_i$ have the Bernoulli distribution with success probability
$1/8$, and let $G_i\sim N(0,\Sigma_i/2)$. Take these objects to be
mutually independent and independent of the original array, and put
\begin{equation}\label{moderate:third-moment-proxy}
 A_i=2\eta_i\boldsymbol X_i',\qquad Z_i=A_i+G_i.
\end{equation}
Then $\mathbb EA_i=\mathbb EZ_i=0$ and
\begin{equation}\label{moderate:proxy-matching}
 \operatorname{Cov}(A_i)=\Sigma_i/2,\qquad
 \operatorname{Cov}(Z_i)=\Sigma_i,\qquad
 \mathbb EZ_i^{\otimes3}=\mathbb E\boldsymbol X_i^{\otimes3}.
\end{equation}
All increments $\boldsymbol X_i,A_i,Z_i,G_i$, and their centered
Gaussian counterparts, have standardized fourth moments bounded by
a fixed constant relative to their own covariances. For $1\le q\le4$,
\begin{equation}\label{moderate:increment-moments-explicit}
 \mathbb E\|V_i\|^q\le C_q(r/n)^{q/2}.
\end{equation}
Also $\|A_i\|\le2L_0\sqrt r$ for the bounded array considered in
this appendix.
\end{lemma}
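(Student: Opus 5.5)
The plan is to verify each claim of Lemma~\ref{lem:third-moment-proxy} by direct moment computations with independence, treating $A_i$, $G_i$, $Z_i$ in turn.

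\emph{Means and covariances.} Since $\eta_i$ and $\boldsymbol X_i'$ are independent and $\mathbb E\boldsymbol X_i'=0$, we get $\mathbb EA_i=2\,\mathbb E\eta_i\,\mathbb E\boldsymbol X_i'=0$, and $\mathbb EZ_i=0$ because $G_i$ is centered. Using $\eta_i^2=\eta_i$,
\[
 \operatorname{Cov}(A_i)=4\,\mathbb E\eta_i\,\Sigma_i=\tfrac48\Sigma_i=\Sigma_i/2 .
\]
Independence of $A_i$ and $G_i$ then gives $\operatorname{Cov}(Z_i)=\Sigma_i/2+\Sigma_i/2=\Sigma_i$.

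\emph{Third moments.} Expand $\mathbb E(A_i+G_i)^{\otimes3}$ by multilinearity. Every mixed term contains either an odd number of factors of the centered Gaussian $G_i$, or exactly one factor of the centered $A_i$ with the other factors independent of it, so all mixed terms vanish. The pure Gaussian term also vanishes, since odd Gaussian moments are zero. What remains is
\[
 \mathbb EA_i^{\otimes3}=8\,\mathbb E\eta_i^3\,\mathbb E\boldsymbol X_i^{\otimes3}=8\cdot\tfrac18\,\mathbb E\boldsymbol X_i^{\otimes3},
\]
using $\eta_i^3=\eta_i$. This is the identity in \eqref{moderate:proxy-matching}, and it explains the choice of the constants $2$ and $1/8$.

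\emph{Standardized fourth moments.} For $\boldsymbol X_i$ this is exactly (A2). For $A_i$,
\[
 \mathbb E|u^\top A_i|^4=16\cdot\tfrac18\,\mathbb E|u^\top\boldsymbol X_i|^4\le 2K_4(u^\top\Sigma_iu)^2=8K_4(u^\top\operatorname{Cov}(A_i)u)^2 .
\]
Gaussian increments have constant $3$. For $Z_i$, apply the fourth-moment identity for independent centered scalars, $\mathbb E(U+V)^4=\mathbb EU^4+6\,\mathbb EU^2\,\mathbb EV^2+\mathbb EV^4$; the cross terms involving a single factor of one variable vanish by centering. The result is bounded by $\max(8K_4,3)$ times the squared variance, the same computation as in the Hilbert-space extension. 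The Gaussian counterparts are handled identically.

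\emph{Norm moments and the almost-sure bound.} For \eqref{moderate:increment-moments-explicit}, use \eqref{new:matrix-fourth} with $M=I$ applied to each increment, whose covariance is $\Sigma_i$ or $\Sigma_i/2$. This gives $\mathbb E\|V_i\|^4\le C(\tr\Sigma_i)^2\le C(C_1r/n)^2$ by (A1). Lyapunov's inequality then gives the orders $q\le4$. Finally, $\|A_i\|\le2\|\boldsymbol X_i'\|\le2L_0\sqrt r$ under the bounded-array assumption.

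The only step needing any care is the bookkeeping in the third-tensor expansion, namely checking that every cross term vanishes. Beyond that, nothing here is a real obstacle.
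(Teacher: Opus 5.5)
Your proposal is correct and follows the paper's proof essentially step by step. You use the same moment identities for $2\eta_i$, the same vanishing of the mixed cubic terms, the same Minkowski-plus-Lyapunov argument for \eqref{moderate:increment-moments-explicit}, and the same direct bound $\|A_i\|\le2\|\boldsymbol X_i'\|$. The only cosmetic difference is the standardized fourth-moment constant for $Z_i$: you get $\max(8K_4,3)$ via the Hilbert-space-extension computation, while the paper computes $2K_4+9/4$ directly; both are fixed constants, so nothing changes.
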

\begin{proof}
For $\zeta_i=2\eta_i$ we have $\mathbb E\zeta_i^2=1/2$ and
$\mathbb E\zeta_i^3=1$. Independence and centering give the first
and second moment identities. Every mixed cubic term in
$\mathbb E(A_i+G_i)^{\otimes3}$ vanishes: a term containing one
Gaussian factor has its zero mean, and a term containing two has
$\mathbb EA_i=0$. The pure Gaussian cubic term vanishes as well.
This proves \eqref{moderate:proxy-matching}.

For a real vector $u$ put $q_i=u^\top\Sigma_i u$. The fourth
moment assumption and independence give
\[
 \mathbb E|u^\top A_i|^4\le2K_4q_i^2
       =8K_4(q_i/2)^2,\qquad
 \mathbb E|u^\top Z_i|^4\le(2K_4+9/4)q_i^2.
\]
In the second inequality the mixed fourth term is $6(q_i/2)^2$
and the Gaussian fourth term is $3(q_i/2)^2$.
The statements include zero-variance directions, which vanish
almost surely. Gaussian increments have standardized fourth
moment three. Thus the standardized bounds are uniform.

If $T_i$ is the covariance of any of these increments, Minkowski's
inequality applied to squared coordinates gives
\[
 (\mathbb E\|V_i\|^4)^{1/2}
 \le C\operatorname{tr}T_i\le C C_1r/n.
\]
Here $T_i\preceq\Sigma_i$, and only the trace part of
\textup{(A1)} is used. Lyapunov's inequality proves
\eqref{moderate:increment-moments-explicit} for lower orders.
The bound on $A_i$ follows directly from its definition.
The construction is a bounded-scalar version of the
third-moment-matching proxy of
\citet[Lemma~2.8]{GotzeUlyanov2000}.
\end{proof}

\begin{lemma}\label{lem:proxy-prefix}
Use the ordering in Lemma~\ref{lem:prefix-order}, and write
$Q_k=\sum_{j\le k}\Sigma_j$. Define
\[
 \gamma_i=\frac{i-1}{2n},\qquad
 \widehat Q_i=\frac12Q_{i-1},\qquad
 W_i^A=\sum_{j<i}A_j+\sum_{j>i}\boldsymbol X_j.
\]
The Gaussian vector $\sum_{j<i}G_j$ is independent of $W_i^A$.
Its covariance $\widehat Q_i$, together with
$S_i^A=\operatorname{Cov}(W_i^A)$, satisfies
\eqref{moderate:partial-covariances} with $\alpha=\gamma_i\le1/2$.
Lemmas~\ref{lem:prefix-damping} and~\ref{lem:unused-block}
apply to this pair with the envelope $H$ in
\eqref{moderate:H-balanced}, the common fixed constants of the
proxy construction, and derivative orders $0\le k\le4$.
\end{lemma}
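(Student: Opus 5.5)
The claim reduces to three verifications: independence, the covariance conditions \eqref{moderate:partial-covariances} with $\alpha=\gamma_i$, and the hypotheses needed to run Lemmas~\ref{lem:prefix-damping} and~\ref{lem:unused-block}. I will take them in that order.

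\emph{Independence.} It is immediate from Lemma~\ref{lem:third-moment-proxy}. The $G_j$ are independent of the original array and of $(\eta_j,\boldsymbol X_j')$. Since $W_i^A$ is built only from $A_j$, $j<i$, and $\boldsymbol X_j$, $j>i$, the vector $\sum_{j<i}G_j$ is independent of it.

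\emph{Covariances.} By \eqref{moderate:proxy-matching}, $\operatorname{Cov}(G_j)=\Sigma_j/2$, so $\widehat Q_i=Q_{i-1}/2\preceq\Sigma$. Also $\operatorname{Cov}(A_j)=\Sigma_j/2$, so
\[
 S_i^A=\tfrac12Q_{i-1}+\sum_{j>i}\Sigma_j\preceq\Sigma .
\]
Lemma~\ref{lem:prefix-order} with $k=i-1$ gives $\|Q_{i-1}-\tfrac{i-1}{n}\Sigma\|_{\mathrm{HS}}\le\eta_n$. Halving yields $\|\widehat Q_i-\gamma_i\Sigma\|_{\mathrm{HS}}\le\eta_n/2$.

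For $S_i^A$ I would write
\[
 S_i^A=\Sigma-\tfrac12Q_{i-1}-\Sigma_i .
\]
Comparing with $(1-\gamma_i)\Sigma=\Sigma-\tfrac{i-1}{2n}\Sigma$, the error is at most $\eta_n/2+\|\Sigma_i\|_{\mathrm{HS}}$. By (A1), $\|\Sigma_i\|_{\mathrm{HS}}\le C_1\sqrt r/n$, and this term is already built into the definition \eqref{moderate:prefix-error} of $\eta_n$. So both bounds hold with $\eta_n$, and $\gamma_i\le(n-1)/(2n)<1/2$ as required.

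\emph{Applying the two lemmas; this is the main obstacle.} Lemma~\ref{lem:prefix-damping} is stated for $W$ a sum over original increments, but $W_i^A$ mixes the proxies $A_j$ with original $\boldsymbol X_j$. I would check that its proof only uses the following facts about the constituents:
\begin{itemize}
\item[(a)] their covariances, through the Hilbert--Schmidt partition calculation, where $\operatorname{tr}(\operatorname{Cov}(A_j)^2)\le\operatorname{tr}(\Sigma_j^2)$ still gives the bound $L(L-1)C_1r/n$;
\item[(b)] Lemma~\ref{lem:shifted-cf} applied to mixed sums.
\end{itemize}
Lemma~\ref{lem:shifted-cf} needs the operator-norm and trace bounds $C/n$ and $Cr/n$, uniformly bounded standardized fourth moments, and the almost sure bound $\|A_j\|\le2L_0\sqrt r$. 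All three are supplied by Lemma~\ref{lem:third-moment-proxy}. The boundedness is what lets Lemma~\ref{lem:new-quadratic-tail} run through \eqref{new:aggregate-sixth} with an enlarged $L_0$.

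One subtlety remains in the partition step. The target for the classes is $\beta\Sigma$ with $\beta=1-\gamma_i\ge1/2$. The partition error is then $\eta_n/2+\|\Sigma_i\|_{\mathrm{HS}}+\sqrt{L(L-1)C_1r/n}$, and this is at most $\beta\varepsilon$ once $n\ge Cr$.

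Finally, Lemma~\ref{lem:unused-block} needs two inputs. The first is \eqref{moderate:resolvent-bound}, which uses only \eqref{moderate:partial-covariances}. The second is the moment bound \eqref{moderate:resolvent-projection-moment} for blocks of $W_i^A$, which follows from Lemma~\ref{lem:directional-moments} since the blocks are mixed sums with covariance at most $\Sigma$. The derivative orders $k\le4$ leave at least three of the seven blocks unused, so the argument goes through unchanged. All constants depend only on $C_1,K_4,C_\Sigma,L_0,\delta$ and the fixed proxy parameters.
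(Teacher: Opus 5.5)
Your proposal is correct and follows essentially the same route as the paper. Both arguments establish \eqref{moderate:partial-covariances} from the identity $S_i^A=\Sigma-\tfrac12Q_{i-1}-\Sigma_i$, the prefix-order bound and $\|\Sigma_i\|_{\mathrm{HS}}\le C_1\sqrt r/n$. Both then extend Lemmas~\ref{lem:prefix-damping} and~\ref{lem:unused-block} to the mixed sum $W_i^A$ by observing that those proofs use only four properties of the increments: centering, covariances $T_j\preceq\Sigma_j$ (so that $\sum_j\operatorname{tr}(T_j^2)\le C_1r/n$ in the partition step), uniform standardized fourth moments, and the norm bound $2L_0\sqrt r$.
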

\begin{proof}
The covariance identity is
\[
 S_i^A=\Sigma-\tfrac12Q_{i-1}-\Sigma_i.
\]
Both covariances are bounded above by $\Sigma$. The prefix ordering
and $\|\Sigma_i\|_{\mathrm{HS}}\le C_1\sqrt r/n$ give
\[
 \|\widehat Q_i-\gamma_i\Sigma\|_{\mathrm{HS}}
 \le2\sqrt{C_1r/n},\qquad
 \|S_i^A-(1-\gamma_i)\Sigma\|_{\mathrm{HS}}
 \le2\sqrt{C_1r/n}+C_1\sqrt r/n\le\eta_n.
\]
Each increment of $W_i^A$ is centered, has covariance
$T_j\preceq\Sigma_j$, obeys a fixed standardized fourth moment
bound, and is bounded in norm by $2L_0\sqrt r$. Consequently the
moment and shifted characteristic-function lemmas apply with
uniform constants. The covariance partition calculation also
applies, since
\[
 \sum_j\operatorname{tr}(T_j^2)
 \le\frac{C_1}{n}\operatorname{tr}\left(\sum_jT_j\right)
 \le\frac{C_1r}{n}.
\]
The resolvent proof depends only on
\eqref{moderate:partial-covariances}; the unused-block proof uses
only independence, the shifted characteristic-function bound,
and projection moments through order four. These properties prove
all assertions.
\end{proof}

\begin{lemma}\label{lem:fixed-gaussian-reserve}
Let $G_0\sim N(0,\Sigma/2)$ and let $W$ be an independent centered
mixed sum of increments $A_j$ and centered Gaussian vectors with
covariances $\Sigma_j/2$, with at most one increment of each index.
Set
\[
 D_{1/2}(t)=\det(I+t^2\Sigma^2/4)^{-1/4},\qquad
 \Phi_t(x)=e^{it\|x\|^2/2},\qquad F(z)=\mathbb E\Phi_t(G_0+W+z).
\]
If $n\ge Cr$ and $h(t)\ge C(r+1)$, then, for $0\le k\le6$
and fixed real vectors $u_1,\ldots,u_k,z$,
\begin{equation}\label{moderate:fixed-reserve-derivatives}
 |D^kF(z)[u_1,\ldots,u_k]|
 \le C_kD_{1/2}(t)|t|^{k/2}
       (1+\sqrt{|t|}\|z\|)^k\prod_{a=1}^k\|u_a\|.
\end{equation}
For $k=0$ one can take $C_0=1$ and the empty product equals one.
The constants are independent of $p,n$ and individual eigenvalues.
\end{lemma}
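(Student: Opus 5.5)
The plan is to integrate out the fixed Gaussian reserve $G_0$ explicitly and then differentiate the resulting closed form. Set $A=(I-it\Sigma/2)^{-1}$. By the identity \eqref{moderate:gaussian-prefix-transform} with $Q=\Sigma/2$, conditionally on $W$,
\[
 \mathbb E_{G_0}\Phi_t(G_0+x)=\det(I-it\Sigma/2)^{-1/2}\exp\{itx^\top Ax/2\},\qquad x=W+z .
\]
The prefactor has modulus exactly $\det(I+t^2\Sigma^2/4)^{-1/4}=D_{1/2}(t)$, and $\Re(itA)\preceq0$, so the exponential has modulus at most one. This gives the case $k=0$ with $C_0=1$ immediately. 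It also shows that the factor $D_{1/2}(t)$ comes entirely from $G_0$. Consequently, unlike Lemma~\ref{lem:unused-block}, no unused-block or partition argument is needed, and the characteristic-function envelope $H$ does not enter.

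For $k\ge1$, differentiate in $z$ under the expectation; this is justified by finite moments of $W$ up to order six. The resulting polynomial has the same structure as $P_{k,Q}$ in \eqref{moderate:gaussian-polynomial-identity}. It is a sum over partitions of $\{1,\dots,k\}$ into singletons and pairs. Each singleton contributes $itu_\ell^\top A(W+z)$ and each pair contributes $itu_\ell^\top Au_m$. Hence
\[
 |D^kF(z)[u_1,\dots,u_k]|\le D_{1/2}(t)\sum_{\text{partitions}}|t|^{k-b}\prod_{\text{pairs}}\|u_\ell\|\|u_m\|\;\mathbb E\prod_{\text{singletons}}|u_\ell^\top A(W+z)|,
\]
where $b$ denotes the number of pairs.

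The key estimate is the resolvent bound $\|A\|\le1$ and $\|A\Sigma A^*\|\le\min(1,2/|t|)$, which follows from the spectral decomposition $\lambda/(1+t^2\lambda^2/4)\le 1/|t|$ together with $\Sigma\preceq I$. Set $b(t)^2=\min(1,2/|t|)$. For the singleton moments, split $|u^\top A(W+z)|\le|u^\top AW|+\|u\|\|z\|$. Apply H\"older's inequality over at most six singletons, and use \eqref{new:resolvent-projection} with $q\le6$ to get $\|u^\top AW\|_{L^6}\le C\|u\|b(t)$. This requires $b^4\ge r/n$. That condition holds because $h(t)\ge C(r+1)$ gives $t^2\le n/(C(r+1))$, hence $b(t)^4\ge\min(1,4/t^2)\ge r/n$; this is the remark following Lemma~\ref{lem:directional-moments}. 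The increments of $W$ are the proxies $A_j$ and Gaussian vectors, which are bounded or Gaussian with uniformly bounded standardized fourth moments by Lemma~\ref{lem:third-moment-proxy}, so that lemma applies.

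Collecting terms, a partition with $b$ pairs is bounded by
\[
 C|t|^{k-b}\bigl(b(t)+\|z\|\bigr)^{k-2b}\prod_a\|u_a\|.
\]
It remains to convert this into the stated form. Suppose first $|t|\ge1$. Then $b(t)\le\sqrt2|t|^{-1/2}$, so
\[
 |t|^{k-b}\bigl(b(t)+\|z\|\bigr)^{k-2b}\le C|t|^{k-b}|t|^{-(k-2b)/2}\bigl(1+\sqrt{|t|}\|z\|\bigr)^{k-2b}=C|t|^{k/2}\bigl(1+\sqrt{|t|}\|z\|\bigr)^{k-2b},
\]
which is at most the claimed bound. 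If instead $|t|<1$, use $b(t)\le1$. Each singleton then gives $|t|(1+\|z\|)\le|t|^{1/2}(1+\sqrt{|t|}\|z\|)\cdot|t|^{1/2}(1+\|z\|)/(1+\sqrt{|t|}\|z\|)$, and this is where care is needed. The ratio $(1+\|z\|)/(1+\sqrt{|t|}\|z\|)$ can be as large as $|t|^{-1/2}$, but this loss is exactly compensated by the spare factor $|t|^{1/2}$. So each singleton contributes at most $C|t|^{1/2}(1+\sqrt{|t|}\|z\|)$, and each pair contributes $|t|\le|t|^{2/2}$. The number of partitions depends only on $k$, so it is absorbed into $C_k$.

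I expect the main obstacle to be the sixth-order projection moment. It is the only place where the restriction $h(t)\ge C(r+1)$ and the bounded-norm hypothesis enter, through \eqref{new:sixth-projection}, and one must check that the proxies $A_j$ satisfy these with uniform constants. No constant depends on $p$, $n$, or individual eigenvalues, since only $\|\Sigma\|=1$ and the uniform moment bounds are used.
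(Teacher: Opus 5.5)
Your proposal is correct and follows essentially the same route as the paper: integrate out $G_0$ via the resolvent $(I-it\Sigma/2)^{-1}$, expand the derivatives over singleton/pair partitions, and control the singleton factors by the sixth-order projection bound \eqref{new:resolvent-projection}, whose hypothesis $b^4\ge r/n$ follows from $n\ge Cr$ and $h(t)\ge C(r+1)$. The only difference is cosmetic: the paper avoids your split into $|t|\ge1$ and $|t|<1$ by noting once that $|t|\,b(t)\le C\sqrt{|t|}$ for all $t$. Hence each singleton is at most $C\sqrt{|t|}\,\|u_a\|(1+\sqrt{|t|}\|z\|)$ and each pair at most $|t|\,\|u_a\|\|u_b\|$.
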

\begin{proof}
By conjugation assume $t>0$. Set $R=(I-it\Sigma/2)^{-1}$ and
$b^2=\min(1,t^{-1})$. Spectral decomposition gives
\[
 \|R\|\le1,\qquad
 \|R\Sigma R^*\|
 =\max_{s\in\operatorname{spec}(\Sigma)}
       \frac{s}{1+t^2s^2/4}\le b^2.
\]
The size and frequency restrictions give $r/n\le1$ and
$t^2r/n\le1$, hence $b^4\ge r/n$. The sixth projection estimate
in Lemma~\ref{lem:directional-moments} therefore yields
\[
 \|u^\top RW\|_{L^q}\le C_q\|u\|b,\qquad 1\le q\le6.
\]
In particular, it is the corrected estimate
$C\|u\|^6\{b^6+(r/n)b^2\}$ that is absorbed here; a relative
Loewner bound on each $\Sigma_i$ is not required. Complex
projections are handled by their real and imaginary parts.

Gaussian integration gives
\[
 \mathbb E_{G_0}\Phi_t(G_0+x)
 =\det(I-it\Sigma/2)^{-1/2}\exp\{itx^\top Rx/2\}.
\]
For real $x$ the exponential has modulus at most one. Its derivatives
are sums over partitions into singletons and pairs. A singleton
contributes $it\,u_a^\top R(W+z)$, and a pair contributes
$it\,u_a^\top Ru_b$. H\"older's inequality bounds products of at
most six random singleton factors by the displayed projection
moments. Since $tb\le\sqrt t$, each singleton is bounded by
$C\sqrt t\|u_a\|(1+\sqrt t\|z\|)$, while each pair is bounded
by $t\|u_a\|\|u_b\|$. The determinant supplies $D_{1/2}(t)$.
Summing the finitely many partitions proves
\eqref{moderate:fixed-reserve-derivatives}.

These sixth moments belong to the bounded background and are
consequences of truncation and the fourth moment assumption.
No uniform sixth moment of the original increments is used.
All differentiations are justified by boundedness of the
non-Gaussian increments and finite Gaussian moments.
\end{proof}

\begin{lemma}\label{lem:complete-remainders}
For a smooth function $F$ define
\[
 \mathcal R_4(F,x)=F(x)-\sum_{k=0}^3\frac1{k!}
                         D^kF(0)[x^{\otimes k}].
\]
If $F(z)=\mathbb E\Phi_t(G_Q+W+z)$ is a prefix background in
Lemma~\ref{lem:unused-block}, put
$\Lambda=\sqrt{|t|}+|t|b_\alpha(t)$. Then
\begin{equation}\label{moderate:complete-prefix-remainder}
 |\mathcal R_4(F,x)|\le CH(t)\Lambda^4\|x\|^4.
\end{equation}
If $F$ has the independent Gaussian component specified in
Lemma~\ref{lem:fixed-gaussian-reserve}, then
\begin{equation}\label{moderate:complete-fourth-remainder}
 |\mathcal R_4(F,x)|\le CD_{1/2}(t)t^2\|x\|^4.
\end{equation}
For such a background and a fixed real vector $a$, write
$h_a(x)=D^3F(x)[a,a,a]$ and
\[
 \mathcal R_3(h_a,x)=h_a(x)-h_a(0)-Dh_a(0)[x]
                         -\tfrac12D^2h_a(0)[x,x].
\]
Then
\begin{equation}\label{moderate:complete-cubic-remainder}
 |\mathcal R_3(h_a,x)|
 \le CD_{1/2}(t)|t|^3\|a\|^3\|x\|^3.
\end{equation}
\end{lemma}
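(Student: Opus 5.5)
The plan is to apply Taylor's theorem with integral remainder along the segment $s\mapsto sx$, $s\in[0,1]$, and to bound the resulting directional derivatives of order four (for $\mathcal R_4$) or of order six (for $\mathcal R_3(h_a,\cdot)$, i.e. third derivatives of $h_a$) uniformly in the shift. The key point is that all derivative bounds available from Lemma~\ref{lem:unused-block} and Lemma~\ref{lem:fixed-gaussian-reserve} hold for arbitrary real shifts $z$, but they carry factors depending on $\|z\|$. Evaluating them at $z=sx$ would produce unwanted powers of $\|x\|$, so I will not use the shift directly. Instead I absorb the shift into the background: $F(z+w)=\mathbb E\Phi_t(G_Q+W+z+w)$ is again a background of the same type with shift $z+w$.

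\emph{Prefix remainder.} First I write
\[
\mathcal R_4(F,x)=\frac{1}{3!}\int_0^1(1-s)^3D^4F(sx)[x^{\otimes4}]\,ds .
\]
Then I bound $|D^4F(sx)[x^{\otimes4}]|$ by \eqref{moderate:derivative-damping} with $k=4$ and shift $z=sx$.

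The terms involving $\|z\|$ are a genuine obstacle, since they give $|t|^{4-b}\{b_\alpha+\|x\|\}^{4-2b}$ and not a pure $\|x\|^4$. To handle them I re-examine the proof of Lemma~\ref{lem:unused-block}. There the shift $z$ enters only through singleton factors $itu^\top A(W+z)$, and the exponential factor has modulus one for every real shift. I therefore expect to recenter the Gaussian integral at the shift, replacing $\exp\{it(W+z)^\top A(W+z)/2\}$ by its value at $W$ times a unimodular phase. This does not remove the linear term, though. So my first attempt is the alternative route: bound $|\mathcal R_4|$ by two cases.
\begin{itemize}
\item When $|t|^{1/2}\|x\|\ge1$, use the crude bound that $F$ and its derivatives at $0$ of order $\le3$ are bounded by $CH(t)\Lambda^k\|x\|^k$ (from \eqref{moderate:derivative-damping} at $z=0$, noting $|t|^{k-b}b_\alpha^{k-2b}\le\Lambda^k$). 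Each term is then dominated by $CH(t)\Lambda^4\|x\|^4$, since $\Lambda\|x\|\ge1$.
\item When $|t|^{1/2}\|x\|\le1$, the shift factors satisfy $\|z\|\le\|x\|\le|t|^{-1/2}$, so $|t|\|z\|\le\sqrt{|t|}\le\Lambda$. The integral form then gives $CH(t)\Lambda^4\|x\|^4$.
\end{itemize}
Checking that $|t|^{k-b}(b_\alpha+\|z\|)^{k-2b}\le C\Lambda^k$ in the second case is routine bookkeeping of the monomials.

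\emph{Fixed Gaussian reserve.} For \eqref{moderate:complete-fourth-remainder} the same dichotomy applies, using \eqref{moderate:fixed-reserve-derivatives}, whose shift factor is $(1+\sqrt{|t|}\|z\|)^k$.
\begin{itemize}
\item If $\sqrt{|t|}\|x\|\le1$, the integral remainder with $k=4$ gives $CD_{1/2}(t)t^2\|x\|^4$.
\item Otherwise each of the terms $F(x)$ and $D^kF(0)[x^{\otimes k}]$, $k\le3$, is at most $CD_{1/2}(t)|t|^{k/2}\|x\|^k\le CD_{1/2}(t)t^2\|x\|^4$.
\end{itemize}
The one point needing care is the term $F(x)$ itself. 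It is bounded by $D_{1/2}(t)$ via the $k=0$ case of \eqref{moderate:fixed-reserve-derivatives} with shift $x$, and that case has no shift dependence.

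\emph{Cubic remainder.} For \eqref{moderate:complete-cubic-remainder}, $h_a$ has derivatives $D^jh_a(z)[x^{\otimes j}]=D^{3+j}F(z)[a^{\otimes3},x^{\otimes j}]$. For $j\le3$ these total order $\le6$, which is exactly why Lemma~\ref{lem:fixed-gaussian-reserve} was proved up to $k=6$. Again split on whether $\sqrt{|t|}\|x\|\le1$.
\begin{itemize}
\item In the small case, the integral remainder with $k=6$ yields $CD_{1/2}|t|^3\|a\|^3\|x\|^3$.
\item In the large case, each term $D^{3+j}F(0)[a^{\otimes 3},x^{\otimes j}]$ is bounded by $CD_{1/2}|t|^{(3+j)/2}\|a\|^3\|x\|^j\le CD_{1/2}|t|^3\|a\|^3\|x\|^3$, using $|t|^{(3-j)/2}\|x\|^{3-j}\ge1$. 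The term $h_a(x)$ is bounded with its shift factor $(1+\sqrt{|t|}\|x\|)^3\le8|t|^{3/2}\|x\|^3$.
\end{itemize}
The prefix case is the main obstacle, because the derivative bound there mixes $b_\alpha$ and $\|z\|$. The inequality $|t|\|z\|\le\Lambda$ under $\sqrt{|t|}\|x\|\le1$ must be verified for every partition type $b=0,1,2$.
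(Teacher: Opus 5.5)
Your proof is correct and follows essentially the paper's argument. In the small-shift region you use the integral Taylor remainder, and in the large-shift region you bound the defining difference term by term, using the shift-uniform $k=0$ bound for $F(x)$; for the Gaussian-reserve and cubic remainders you split at $\sqrt{|t|}\|x\|=1$, exactly as the paper does. The only deviation is in the prefix remainder, where you split at $\sqrt{|t|}\|x\|=1$ instead of the paper's $\Lambda\|x\|=1$; this is equally valid because $\Lambda\ge\sqrt{|t|}$. The recentering idea in your first paragraph is not needed.
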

\begin{proof}
Assume $t>0$. The prefix derivative estimate implies, for
$0\le k\le4$,
\[
 |D^kF(z)[u_1,\ldots,u_k]|
 \le CH(t)(\Lambda+t\|z\|)^k\prod_a\|u_a\|.
\]
Indeed each pair contributes $t\le\Lambda^2$. If
$\|x\|\le\Lambda^{-1}$, then $t\|sx\|\le\Lambda$ for
$0\le s\le1$, so Taylor's integral remainder proves
\eqref{moderate:complete-prefix-remainder}. If
$\|x\|>\Lambda^{-1}$, bound its defining difference:
\[
 |\mathcal R_4(F,x)|
 \le CH(t)\left(1+\sum_{k=0}^3(\Lambda\|x\|)^k\right)
 \le CH(t)\Lambda^4\|x\|^4.
\]
The bound $|F(x)|\le CH(t)$ is a bound on the averaged function;
no damping is asserted for the absolute value of the bare phase.

For a background containing the specified Gaussian component put $q=\sqrt t\|x\|$.
If $q\le1$, Taylor's integral formula and
\eqref{moderate:fixed-reserve-derivatives}, along $sx$ with
$0\le s\le1$, give \eqref{moderate:complete-fourth-remainder}
using $k=4$, and \eqref{moderate:complete-cubic-remainder} using
$k=6$. If $q>1$, use the defining differences instead. For the
fourth remainder, $|F(x)|\le D_{1/2}(t)$, and the Taylor term of
degree $k\le3$ is at most $CD_{1/2}(t)q^k$. Their sum is bounded
by $CD_{1/2}(t)q^4$.
For the cubic remainder,
\[
 |h_a(x)|\le CD_{1/2}(t)t^{3/2}\|a\|^3(1+q)^3.
\]
Its degree-$j$ Taylor term, $j=0,1,2$, is bounded by
$CD_{1/2}(t)t^{3/2}\|a\|^3q^j$. Since $q>1$, the whole
difference is at most
$CD_{1/2}(t)t^{3/2}\|a\|^3q^3$, proving
\eqref{moderate:complete-cubic-remainder}.
Thus the sixth derivative is needed only for small shifts;
large shifts are controlled by the complete remainder itself.
\end{proof}

\begin{proposition}\label{prop:moderate}
For the normalized bounded array satisfying \textup{(A1)--(A3)}
and $v\ge13/2$,
write $\mathfrak f(t)=\mathbb E e^{it\|\boldsymbol S_n\|^2/2}$ and
$\widetilde{\mathfrak f}(t)=
\mathbb E e^{it\|\widetilde{\boldsymbol S}\|^2/2}$.
Use the parameters $v_0=6+\delta$, $\varepsilon=\delta/(8v_0)$,
$\kappa=1-4\varepsilon$, and $c_*=1/(8\cdot14^2)$, and the
envelope $H$ from \eqref{moderate:H-balanced}.
If $n\ge Cr$, then for every $t\ne0$ satisfying
$h(t)\ge C(r+1)$,
\begin{equation}\label{moderate:full-bound}
 |\mathfrak f(t)-\widetilde{\mathfrak f}(t)|
 \le CH(t)\left\{\frac{r^2}{n}|t|^2+
                         \frac{r^3}{n}|t|^3\right\}.
\end{equation}
For every fixed $C_H>0$ there is a fixed $N_0$ such that, if
$n\ge N_0r^{3/2}$ and $\tau=C_Hn^{1/3}/\sqrt v$, then
\begin{equation}\label{moderate:integrated}
 \int_0^\tau\frac{|\mathfrak f(t)-\widetilde{\mathfrak f}(t)|}{t}\,dt
 \le C\frac{r^{3/2}}n.
\end{equation}
The constants depend only on the fixed assumption constants,
$L_0,\delta$, and, in the integrated assertion, $C_H$.
\end{proposition}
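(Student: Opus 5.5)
The plan is a Lindeberg-type telescoping replacement with third-moment matching, organised along the prefix ordering of Lemma~\ref{lem:prefix-order}. Relabel the increments in that order. Let $\widetilde X_i\sim N(0,\Sigma_i)$ be independent Gaussian replacements, and write $\Phi_t(x)=e^{it\|x\|^2/2}$. Then
\[
 \mathfrak f(t)-\widetilde{\mathfrak f}(t)
 =\sum_{i=1}^n\mathbb E\bigl\{\Phi_t(V_i+\boldsymbol X_i)-\Phi_t(V_i+\widetilde X_i)\bigr\},
\]
where $V_i$ collects the replaced Gaussians for $j<i$ and the original increments for $j>i$. The difficulty is that a direct fourth-order Taylor expansion only cancels terms up to order two, because $\widetilde X_i$ does not match the third moment of $\boldsymbol X_i$. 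To handle this, I would interpose the proxy $Z_i=A_i+G_i$ of Lemma~\ref{lem:third-moment-proxy}, which matches moments up to order three. Each summand then splits into $\boldsymbol X_i\to Z_i$ and $Z_i\to\widetilde X_i$.

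For the first swap, I use a background in which the prefix $j<i$ consists of $A_j+G_j$. The Gaussian parts $\sum_{j<i}G_j$ form the prefix $G_Q$ with $Q=\widehat Q_i$. Lemma~\ref{lem:proxy-prefix} supplies the hypotheses of Lemmas~\ref{lem:prefix-damping} and~\ref{lem:unused-block} with $\alpha=\gamma_i$. Expand $F(z)=\mathbb E\Phi_t(\text{background}+z)$ to fourth order at $0$. The terms of order at most three cancel by \eqref{moderate:proxy-matching}, using independence of the swapped increment from the background. The remainder is bounded through \eqref{moderate:complete-prefix-remainder} by
\[
 CH(t)\Lambda_i^4\,\mathbb E\|\boldsymbol X_i\|^4
 \le CH(t)\Lambda_i^4 r^2/n^2,
 \qquad \Lambda_i=\sqrt{|t|}+|t|b_{\gamma_i}(t).
\]
Summing over $i$ with Lemma~\ref{lem:position-sums} (since $\Lambda_i^4\le C F_4(\gamma_i,t)$) gives $CH(t)(r^2/n)(t^2+t^3)$. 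Since $r\ge1$, this lies within the right-hand side of \eqref{moderate:full-bound}.

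For the second swap, $Z_i\to\widetilde X_i$, the covariances agree but $\mathbb EZ_i^{\otimes3}=\mathbb E\boldsymbol X_i^{\otimes3}\ne0$. I would split off $N(0,\Sigma/2)$ from the Gaussian material so that the background carries the fixed reserve $G_0$ of Lemma~\ref{lem:fixed-gaussian-reserve}. The difference then reduces to a cubic term $\tfrac16 D^3F(0)[\mathbb E\boldsymbol X_i^{\otimes3}]$ plus a fourth-order remainder, and \eqref{moderate:complete-fourth-remainder} bounds the latter by $CD_{1/2}(t)t^2r^2/n^2$ per index. The cubic terms do not cancel individually. Their sum $\sum_i D^3F_i(0)[\mathbb E\boldsymbol X_i^{\otimes3}]$ must instead be controlled by a second-order replacement: compare each $F_i$ with a common Gaussian-background $F$ via \eqref{moderate:complete-cubic-remainder}, which gives a contribution $CD_{1/2}(t)|t|^3\,\mathbb E\|\boldsymbol X_i\|^3\cdot$ (a third moment of the background difference). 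Then use that, for a purely Gaussian centered background, $D^3F(0)$ is an odd functional whose contraction with the third-moment tensor gives $|t|^3\cdot O(r^3/n)$ after summation. This produces the $r^3|t|^3/n$ term.

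I expect this cubic/second-swap step to be the main obstacle. One must keep $H(t)$ (or $D_{1/2}$, which is dominated by $(1+c_*t^2)^{-\kappa v/4}$ via Lemma~\ref{lem:A.3}) rather than lose it. One must also track how tensor contractions against resolvents $R$ are bounded by traces, $\|\mathbb E\boldsymbol X_i^{\otimes3}\|$ being estimated through Lemma~\ref{lem:directional-moments}.

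Finally, integrating \eqref{moderate:full-bound} against $dt/t$ over $(0,\tau]$ proceeds as follows. The main part $(1+c_*t^2)^{-\kappa v/4}$ with the weights $t^{1},t^{2}$ gives $C(r^2/n)v^{-1}+C(r^3/n)v^{-3/2}$ by \eqref{moderate:balanced-integrals}. Under (A3)(i) this is at most $Cr^{3/2}/n$. The exponential part is negligible because $h(\tau)\ge C(r+\log 2n)$. The polynomial part was already checked in the proof of Proposition~\ref{prop:bounded-array-main}.
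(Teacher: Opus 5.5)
Your first stage is the paper's and is fine: the prefix background $\sum_{j<i}G_j+W_i^A$, Lemma~\ref{lem:proxy-prefix}, the remainder bound \eqref{moderate:complete-prefix-remainder}, and Lemma~\ref{lem:position-sums} once you note $b_{\gamma_i}(t)=b_{(i-1)/n}(t/2)$. Retaining $G_0\sim N(0,\Sigma/2)$ and swapping $A_i\to B_i$ is also the right setup. The gap is in the cubic terms $\tfrac16\sum_i D^3F_i^0(0)[\mathbb E A_i^{\otimes3}]$. This step decides between the rates $n^{-1}$ and $n^{-1/2}$, and your accounting of it would not give $n^{-1}$.

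First, at a centered Gaussian background this term is not small but exactly zero. There $F(z)=\mathbb E\Phi_t(G+z)$ is even in $z$, since $\Phi_t$ is even and $G$ is symmetric, so $D^3F(0)=0$; equivalently, $\mathbb E h_i(G_0+\sum_{j\ne i}B_j)=0$.

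You instead attribute the $r^3|t|^3/n$ term to a contraction of $D^3F(0)$ with $\mathbb E\boldsymbol X_i^{\otimes3}$, to be bounded by resolvent and trace estimates and Lemma~\ref{lem:directional-moments}. Any bound on the size of such a cubic term is at best $C D_{1/2}(t)|t|^{3/2}\mathbb E\|\boldsymbol X_i\|^3\sim |t|^{3/2}(r/n)^{3/2}$ per index, as in Lemma~\ref{lem:fixed-gaussian-reserve}. Summed over $n$ indices, that is only $O(n^{-1/2})$.

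Second, the $r^3|t|^3/n$ term actually comes from passing from $V_i=G_0+\sum_{j<i}B_j+\sum_{j>i}A_j$ to that Gaussian background. This must be done one index at a time: replace $A_j$ by $B_j$ for each $j>i$. Equal means and covariances kill the Taylor terms through degree two, leaving only $\mathcal R_3(h_{ij,A_i},A_j)-\mathcal R_3(h_{ij,A_i},B_j)$, as in \eqref{moderate:double-replacement-explicit}. By \eqref{moderate:complete-cubic-remainder}, each pair $(i,j)$ costs at most $CD_{1/2}(t)t^3\,\mathbb E\|A_i\|^3\{\mathbb E\|A_j\|^3+\mathbb E\|B_j\|^3\}\le CD_{1/2}(t)t^3(r/n)^3$. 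Summing over at most $n^2/2$ pairs gives $CD_{1/2}(t)t^3r^3/n$.

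Your version compares each $F_i$ with a Gaussian background once, against a third moment of the whole background difference. That costs $\mathbb E\|\sum_{j>i}A_j\|^3\sim r^{3/2}$ per $i$, and sums only to $CD_{1/2}(t)t^3r^3/\sqrt n$.

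So the missing ingredients are the exact parity cancellation and the pairwise (double Lindeberg) expansion. It is the product of third moments of two distinct independent increments, summed over $O(n^2)$ pairs, that produces the factor $n^{-1}$.
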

\begin{proof}
Use the order in Lemma~\ref{lem:prefix-order} and construct the
proxy array as above. Put
$\mathfrak f_Z(t)=\mathbb E\Phi_t(\sum_iZ_i)$.
By conjugation assume $t>0$. The size and frequency restrictions
imply, after increasing their constant,
\begin{equation}\label{moderate:shift-absorption}
 t^2r/n\le1,\qquad n\ge r,\qquad t\le\sqrt{n/r}\le n.
\end{equation}

\emph{Comparison with the moment-matching proxy.}
For each $i$ let
\[
 W_i=\sum_{j<i}Z_j+\sum_{j>i}\boldsymbol X_j,\qquad
 F_i(x)=\mathbb E\Phi_t(W_i+x).
\]
The background is independent of $\boldsymbol X_i,Z_i$. Telescoping
and exact Taylor expansion through degree three give
\begin{equation}\label{moderate:proxy-replacement}
 \mathfrak f(t)-\mathfrak f_Z(t)
 =\sum_i\mathbb E\{\mathcal R_4(F_i,\boldsymbol X_i)
                         -\mathcal R_4(F_i,Z_i)\},
\end{equation}
because all terms through degree three cancel by
\eqref{moderate:proxy-matching}.
Decompose $W_i=\sum_{j<i}G_j+W_i^A$. Lemma~\ref{lem:proxy-prefix}
and \eqref{moderate:complete-prefix-remainder}, with
$\gamma_i=(i-1)/(2n)$, yield
\[
 |\mathfrak f(t)-\mathfrak f_Z(t)|
 \le CH(t)(r/n)^2\sum_{i=1}^n
                  (\sqrt t+tb_{\gamma_i}(t))^4.
\]
Here we used only the fourth moments in
\eqref{moderate:increment-moments-explicit}.
Since $(a+b)^4\le8(a^4+b^4)$ and
$b_{\gamma_i}(t)=b_{(i-1)/n}(t/2)$, the position sum
in Lemma~\ref{lem:position-sums} bounds the last sum by
$Cn(t^2+t^3)$; its condition $t/2\le n$ follows from
\eqref{moderate:shift-absorption}. We conclude that
\begin{equation}\label{moderate:proxy-comparison}
 |\mathfrak f(t)-\mathfrak f_Z(t)|
 \le CH(t)\frac{r^2}{n}(t^2+t^3).
\end{equation}

\emph{Comparison retaining an independent Gaussian component.}
Let $G_0=\sum_iG_i\sim N(0,\Sigma/2)$ and let
$B_i\sim N(0,\Sigma_i/2)$ be independent of all previous
variables. Then $G_0+\sum_iB_i\sim N(0,\Sigma)$, whereas
$G_0+\sum_iA_i$ defines $\mathfrak f_Z$.
The entire Gaussian vector $G_0$ is retained in every background.

For the first ordered replacement set
\[
 V_i=G_0+\sum_{j<i}B_j+\sum_{j>i}A_j,\qquad
 F_i^0(x)=\mathbb E\Phi_t(V_i+x),\qquad
 h_i(y)=\mathbb E_{A_i}D^3\Phi_t(y)[A_i,A_i,A_i].
\]
The means and covariances of $A_i,B_i$ agree, and the cubic
Gaussian tensor vanishes. Therefore
\begin{equation}\label{moderate:first-replacement-explicit}
 \begin{split}
 \mathfrak f_Z(t)-\widetilde{\mathfrak f}(t)
 &=\sum_i\mathbb E\{\mathcal R_4(F_i^0,A_i)
                             -\mathcal R_4(F_i^0,B_i)\}\\
 &\quad+\frac16\sum_i\mathbb Eh_i(V_i).
 \end{split}
\end{equation}
By \eqref{moderate:complete-fourth-remainder} the first sum is
bounded in absolute value by $CD_{1/2}(t)t^2r^2/n$.

Since $\Phi_t$ is even, $h_i$ is odd. Consequently
\[
 \mathbb Eh_i\left(G_0+\sum_{j\ne i}B_j\right)=0.
\]
Replace the remaining indices $j>i$ in $V_i$ successively by
their Gaussian counterparts. For $i<j$ define
\[
 U_{ij}=G_0+\sum_{\substack{k<j\\k\ne i}}B_k+\sum_{k>j}A_k,
 \quad F_{ij}(x)=\mathbb E\Phi_t(U_{ij}+x),
 \quad h_{ij,a}(x)=D^3F_{ij}(x)[a,a,a].
\]
The background $U_{ij}$ is independent of $A_i,A_j,B_j$.
Expanding through degree two, whose terms cancel by equal means
and covariances, gives the exact identity
\begin{equation}\label{moderate:double-replacement-explicit}
 \mathbb Eh_i(V_i)
 =\sum_{j>i}\mathbb E\{
     \mathcal R_3(h_{ij,A_i},A_j)-\mathcal R_3(h_{ij,A_i},B_j)\}.
\end{equation}
Apply \eqref{moderate:complete-cubic-remainder} to fixed directions
before averaging them. Independence and the third-moment consequence
of \eqref{moderate:increment-moments-explicit} give
\[
 \begin{split}
 &\left|\mathbb E\{
       \mathcal R_3(h_{ij,A_i},A_j)-\mathcal R_3(h_{ij,A_i},B_j)\}\right|\\
 &\quad\le CD_{1/2}(t)t^3\mathbb E\|A_i\|^3
       \{\mathbb E\|A_j\|^3+\mathbb E\|B_j\|^3\}
 \le CD_{1/2}(t)t^3(r/n)^3.
 \end{split}
\]
There are at most $n^2/2$ pairs. Thus
\begin{equation}\label{moderate:reserve-comparison}
 |\mathfrak f_Z(t)-\widetilde{\mathfrak f}(t)|
 \le CD_{1/2}(t)
       \left\{\frac{r^2}{n}t^2+\frac{r^3}{n}t^3\right\}.
\end{equation}
All differentiations and exact expansions are legitimate because
the truncated increments are bounded and Gaussian increments
have all moments. Their quantitative estimates use only the
fourth moment assumption. The two third moments in the last
display belong to distinct independent increments; no sixth
moment of a direction increment is substituted for their product.

The spectral bound gives
\[
 D_{1/2}(t)\le(1+t^2/4)^{-v/4}
       \le(1+c_*t^2)^{-\kappa v/4}\le H(t).
\]
Combining \eqref{moderate:proxy-comparison} and
\eqref{moderate:reserve-comparison}, and using $r\ge1$, proves
\eqref{moderate:full-bound}.

\emph{Integration at the chosen cutoff.}
Fix $C_H$ and put $\tau=C_Hn^{1/3}/\sqrt v$.
Since $v\ge6$ and $r\le C_\Sigma v$, increasing $N_0$ in
$n\ge N_0r^{3/2}$ ensures
\[
 h(\tau)\ge M\{r+\log(2n)\}
\]
for any required fixed $M$. Indeed
\[
 h(\tau)\ge\frac12\min\left\{
       C_H^{-2}n^{1/3}v,\ C_H^{-1}n^{2/3}\sqrt v\right\},
\]
and each term dominates $r+\log(2n)$ by an arbitrarily large
fixed factor after increasing $N_0$.
Thus the pointwise bound applies throughout $(0,\tau]$.

Lemma~\ref{lem:A.3} bounds the determinant contribution by
\[
 \frac Cn\{r^2v^{-1}+r^3v^{-3/2}\}
 \le C\frac{r^{3/2}}n.
\]
For the exponential part, monotonicity gives, for $q=2,3$,
\[
 \int_0^\tau t^{q-1}e^{-ch(t)}\,dt
 \le\frac{\tau^q}{q}e^{-ch(\tau)}.
\]
Choose $M$ so that $e^{-ch(\tau)}\le(2n)^{-10}$.
As $r\le n$ and $\tau\le\sqrt n$, this contribution is also
bounded by $Cr^{3/2}/n$.

Finally the polynomial error in $H$ contributes at most
\[
 \begin{split}
 &\frac{Cr^{7/2}}{n^3}\int_0^\tau t^4\,dt
   +\frac{Cr^{9/2}}{n^3}\int_0^\tau t^5\,dt\\
 &\quad\le C\left\{
   \frac{r^{7/2}}{n^{4/3}v^{5/2}}
       +\frac{r^{9/2}}{nv^3}\right\}
 \le C\left\{\frac r{n^{4/3}}+\frac{r^{3/2}}n\right\}
 \le C\frac{r^{3/2}}n.
 \end{split}
\]
Here the middle inequality uses \textup{(A3)(i)}, and the fixed
powers of $C_H$ are absorbed into $C$. The additional polynomial
error therefore remains within the required rate. This proves
\eqref{moderate:integrated}.
\end{proof}

\section{Random cross frames}\label{sec:appendixB}
The construction below applies to the actual nonidentically distributed
array. Only its directional fourth moments and covariance bounds are used.

\begin{lemma}\label{lem:cross-frames}
Let $\Sigma\ge0$, $\|\Sigma\|=1$, and
$v=\operatorname{tr}(\Sigma^2)\ge v_0>s-1$, where $s$ is a fixed positive integer
and $v_0$ is fixed. Put $\eta=1-(s-1)/v_0>0$.
Suppose that, conditional on the previously exposed vectors, each new
real vector $Y$ satisfies
\[
 \mathbb E[Y\mid\mathcal F]=0,\qquad
 a\Sigma\le\mathbb E[YY^\top\mid\mathcal F]\le b\Sigma,
 \qquad
 \mathbb E[|u^\top Y|^4\mid\mathcal F]
 \le K(u^\top\Sigma u)^2
\]
for every real $u$, where $0<a\le b<\infty$ and $K<\infty$ are fixed.
Expose $L$ vectors on the left and then $L$ more on the right.
For every $0<\theta<1$ one can choose a finite $L=L(s,a,b,K,\theta)$
such that, with probability at least $\theta$, there exist $s$ distinct
left vectors $Z_1,\ldots,Z_s$ and $s$ distinct right vectors
$Z'_1,\ldots,Z'_s$ for which all singular values of
\[
 \bigl(\langle Z_j,Z'_k\rangle\bigr)_{j,k=1}^s
\]
belong to $[c\sqrt v,C\sqrt v]$.
The positive constants $c,C$ depend only on $s,v_0,a,b,K$, and can be chosen
independently of $\theta$.
In particular the conclusion holds for independent draws from any
centered law with covariance comparable to $\Sigma$ and the stated
uniform directional fourth moments.
\end{lemma}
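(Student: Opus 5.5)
A natural plan is a greedy, block-by-block selection. The left and right exposures are each split into $s$ consecutive blocks of $M$ vectors, so $L=sM$. The frames are then built one column at a time, and each new pair is kept only if it preserves the spectral bounds of the partial Gram matrix. The underlying heuristic is that for independent $Z,Z'$ with covariances comparable to $\Sigma$ one has
\[
 \mathbb E\langle Z,Z'\rangle^2=\tr(\Sigma_Z\Sigma_{Z'})\in[a^2v,\,b^2v].
\]
Also, the Gram entries $\langle Z_j,Z'_k\rangle$ for distinct pairs behave like weakly correlated variables of size $\sqrt v$. Hence an $s\times s$ Gram matrix should have all singular values of order $\sqrt v$ with positive probability. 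Repetition over blocks then boosts this probability to $\theta$.

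\emph{Step 1: second moments of Gram entries.} Conditioning and the assumptions give $\mathbb E[\langle Z,Z'\rangle^2\mid\cdot]\ge a^2 v$. For the fourth moment, (A2)-type reasoning with $u=Z$ fixed gives
\[
 \mathbb E[\langle Z,Z'\rangle^4\mid Z]\le K(Z^\top\Sigma Z)^2 .
\]
Moreover, $\mathbb E(Z^\top\Sigma Z)^2\le K\{\tr(\Sigma_Z\Sigma)\}^2\le Kb^2v^2$ by the matrix-fourth-moment bound \eqref{new:matrix-fourth}, applied with $M=\Sigma^{1/2}$. Thus $\langle Z,Z'\rangle/\sqrt v$ has second moment of order one and fourth moment bounded by a constant. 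Paley--Zygmund then gives a fixed probability $p_0>0$ of the event $|\langle Z,Z'\rangle|\ge c\sqrt v$. The upper bound $\|\cdot\|\le C\sqrt v$ on the whole Gram matrix holds with probability close to one by Markov's inequality on $\sum_{j,k}\langle Z_j,Z'_k\rangle^2$.

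\emph{Step 2: lower singular value, the main obstacle.} A single large entry is not enough; we need $\sigma_{\min}\ge c\sqrt v$. This is where $v>s-1$ enters. I would build the frames inductively. Suppose $Z_1,\dots,Z_{m}$ and $Z'_1,\dots,Z'_{m}$ have been chosen with Gram matrix $\Gamma_m$ satisfying $\sigma_{\min}(\Gamma_m)\ge c_m\sqrt v$. By the Schur complement, the new matrix $\Gamma_{m+1}$ is well conditioned provided
\[
 \langle Z,Z'\rangle-x^\top\Gamma_m^{-1}y
\]
is at least $c\sqrt v$ in modulus, where $x_k=\langle Z,Z'_k\rangle$ and $y_j=\langle Z_j,Z'\rangle$. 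Given the old vectors, this quantity is a bilinear form $Z^\top B Z'$ with
\[
 B=I-\sum_{j,k}Z'_k(\Gamma_m^{-1})_{kj}Z_j^\top ,
\]
that is, the identity minus an operator of rank at most $m\le s-1$. The key estimate is
\[
 \tr(\Sigma B\Sigma B^\top)\gtrsim \tr(\Sigma^2)-(s-1)\|\Sigma\|^2\ge \eta v .
\]
The intuition is that removing a rank-$m$ piece can kill at most $m$ units of the squared spectral mass $v$; this is exactly why $\eta=1-(s-1)/v_0$ appears. I expect this rank-deficit computation to be the delicate point, because $B$ is not an orthogonal projection. I would first try a singular value (Ky Fan) inequality for $\Sigma^{1/2}B\Sigma^{1/2}$ versus $\Sigma$, losing only the top $m$ singular values, each at most $\|\Sigma\|\cdot\|B\|$-related. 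If $\|B\|$ is uncontrolled, I would instead truncate on the event where the old vectors have bounded $\Sigma$-norms, which holds with high probability by Step 1. With this lower variance bound and the fourth-moment bound, which again comes from the directional condition applied conditionally twice, Paley--Zygmund gives conditional success probability at least $p_1(s,a,b,K)>0$ for each fresh pair.

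\emph{Step 3: boosting.} Within block $m+1$, the $M$ left and $M$ right candidates give independent trials of the Schur-complement event, conditional on the previous choices; disjoint pairs suffice. So the failure probability is at most $(1-p_1)^M$. Summing over the $s$ stages and adding the Markov failure for the upper bound shows that a large $M=M(s,a,b,K,\theta)$ achieves probability at least $\theta$. The constants $c,C$ come only from Paley--Zygmund thresholds and Markov levels fixed once, so they do not depend on $\theta$, as claimed.
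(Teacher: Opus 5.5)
Your overall strategy of block-wise greedy selection, Paley--Zygmund for the lower bound and repetition to reach probability $\theta$ is the right kind of argument. The rank-deficit estimate you flag as delicate is actually immediate. The matrix $\Sigma^{1/2}B\Sigma^{1/2}=\Sigma-\Sigma^{1/2}Z'_{[m]}\Gamma_m^{-1}Z_{[m]}^\top\Sigma^{1/2}$ is $\Sigma$ minus a matrix of rank at most $m$. Weyl's singular-value inequality (Eckart--Young) then gives $\tr(\Sigma B\Sigma B^\top)=\|\Sigma^{1/2}B\Sigma^{1/2}\|_{\mathrm{HS}}^2\ge\sum_{j>m}s_j^2\ge v-m\ge\eta v$. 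Since the Paley--Zygmund ratio is scale invariant, $\|B\|$ never enters.

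The genuine gaps are elsewhere. \emph{First}, the lemma concerns sequentially exposed vectors, with all $L$ left vectors exposed before the right ones. Only the law of each vector given the \emph{earlier} ones is controlled. At stage $m+1$ you average over a new left candidate $Z$ while conditioning on the selected right vectors $Z'_1,\dots,Z'_m$, which were exposed after $Z$. The hypotheses bound neither $\mathbb E[ZZ^\top\mid Z'_1,\dots,Z'_m]$ from below nor the corresponding fourth moments from above. So your variance computation $\tr(\Sigma_ZB\Sigma_{Z'}B^\top)$ and the ``independent trials'' of Step~3 establish only the independent special case. They do not cover the adapted statement, which is what the without-replacement application needs.

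\emph{Second}, the upper bound is never boosted. A Markov bound on $\sum_{j,k}\langle Z_j,Z'_k\rangle^2$ at a level fixed once fails with a fixed probability that does not shrink as the block length grows. It is also applied to selected, hence biased, vectors. To reach an arbitrary $\theta<1$, the Markov level, and hence $C$, must therefore grow with $\theta$. Your lower singular value comes from the Schur complements only together with $\|x\|,\|y\|\lesssim\sqrt v$, or via $\sigma_{\min}\ge|\det\Gamma_s|/\|\Gamma_s\|^{s-1}$, so $c$ inherits the same dependence. This contradicts your closing claim that $c,C$ are independent of $\theta$.

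Both defects are repaired by one observation. Note that $B^\top Z=Z-Z_{[m]}\Gamma_m^{-\top}Z'^\top_{[m]}Z$ lies in $Z+\operatorname{span}(Z_1,\dots,Z_m)$. Hence, given all left data, the Schur complement is a linear form in the fresh right vector. Its conditional variance is at least $a\|(I-P)\Sigma^{1/2}Z\|^2$, where $P$ projects onto $\operatorname{span}(\Sigma^{1/2}Z_1,\dots,\Sigma^{1/2}Z_m)$ and depends only on left vectors.

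This leads to the paper's decoupled argument:
\begin{enumerate}
\item Select the $s$ left vectors first. Accept a candidate only if simultaneously $\|(I-P)\Sigma^{1/2}Y\|^2\ge a\eta v/2$ and $\|\Sigma^{1/2}Y\|^2\le 8Kbv/a^2$. The first holds with probability at least $a^2/(4K)$ by Paley--Zygmund, and the second fails with probability at most $a^2/(8K)$ by Markov. This yields $c_LvI_s\preceq Z^\top\Sigma Z\preceq C_LvI_s$.
\item Conditionally on the whole left history, run the same two-sided selection in $\mathbb R^s$ for $U=Z^\top Y/\sqrt v$.
\item Read off the singular values from the determinant and the Frobenius norm.
\end{enumerate}
Because the lower and the upper requirements are both part of every acceptance event, block repetition drives the total failure probability to $2s(1-\alpha)^m$ with fixed thresholds. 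That is exactly what makes $c,C$ independent of $\theta$.
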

\begin{proof}
For a real matrix $A$, conditional Minkowski's inequality gives
\[
 \mathbb E[\|AY\|^4\mid\mathcal F]
 \le K\{\operatorname{tr}(A\Sigma A^\top)\}^2.
\]
Put $M=8K/a^2$ and $\alpha=a^2/(8K)$. Since $K\ge a^2$,
$M\ge8$ and $0<\alpha\le1/8$.

Suppose that $k<s$ left vectors have been selected, and let $P$ be the
orthogonal projection onto the span of their images under
$\Sigma^{1/2}$. For a fresh $Y$, write $\boldsymbol{X}=\Sigma^{1/2}Y$ and
$R=\|(I-P)\boldsymbol{X}\|^2$. The conditional mean of $R$ is at least
\[
 a\operatorname{tr}\{(I-P)\Sigma^2\}
 \ge a(v-k)\ge a\eta v.
\]
Indeed, $k\le s-1$ and $v\ge v_0$ imply
$v-k\ge v-s+1\ge\{1-(s-1)/v_0\}v=\eta v$.
The conditional fourth-moment inequality above implies
\[
 \mathbb E[R^2\mid\mathcal F]
 \le\frac K{a^2}\{\mathbb E[R\mid\mathcal F]\}^2.
\]
Paley--Zygmund therefore gives
\[
 \Pr\{R\ge a\eta v/2\mid\mathcal F\}\ge a^2/(4K)=2\alpha.
\]
On the other hand, conditional Markov's inequality gives
\[
 \Pr\{\|\boldsymbol{X}\|^2>Mbv\mid\mathcal F\}\le1/M=\alpha.
\]
Thus a vector satisfying both
$R\ge a\eta v/2$ and $\|\boldsymbol{X}\|^2\le Mbv$ occurs with conditional probability
at least $\alpha$, uniformly over the previously selected frame.

Divide the left draws into $s$ consecutive blocks of $m$ vectors each.
In each block select the first vector satisfying these two conditions.
The conditional probability of failure in any block is at most
$(1-\alpha)^m$. Hence, except on an event of probability at most
$s(1-\alpha)^m$, all $s$ selections succeed.
Let $Z=(Z_1,\ldots,Z_s)$ be the selected column matrix. The successive
Gram--Schmidt residuals give
\[
 \det(Z^\top\Sigma Z)\ge(a\eta v/2)^s,
 \qquad
 \lambda_{\max}(Z^\top\Sigma Z)
 \le\operatorname{tr}(Z^\top\Sigma Z)\le sMbv.
\]
Consequently,
\[
 c_LvI_s\le Z^\top\Sigma Z\le C_LvI_s,
 \qquad
 c_L=\frac{(a\eta/2)^s}{(sMb)^{s-1}},\quad C_L=sMb.
\]

Condition now on the full left history. For a new right vector define
$U=Z^\top Y/\sqrt v$ and $A_Z=Z^\top\Sigma Z/v$.
Its conditional covariance lies between $aA_Z$ and $bA_Z$, and
\[
 \mathbb E[|w^\top U|^4\mid\mathcal F]
 \le K(w^\top A_Zw)^2.
\]
After $k<s$ right selections, let $P$ project onto their span in
$\mathbb R^s$. The same Paley--Zygmund and Markov argument, now using
$A_Z\ge c_LI_s$, shows that a fresh right vector simultaneously
satisfies
\[
 \|(I-P)U\|^2\ge ac_L/2,
 \qquad \|U\|^2\le Mb\,sC_L
\]
with conditional probability at least $\alpha$.
Use $s$ further blocks of length $m$. Conditional on successful left
selection, all right selections succeed with probability at least
$1-s(1-\alpha)^m$.
For the square matrix $R=(U_1,\ldots,U_s)$ of selected projected
right columns,
\[
 |\det R|^2\ge(ac_L/2)^s,
 \qquad \|R\|^2\le\operatorname{tr}(RR^\top)
                    \le Mb\,s^2C_L.
\]
It follows that its singular values belong to $[c,C]$, where one may
take
\[
 C=(Mb\,s^2C_L)^{1/2},\qquad
 c=\frac{(ac_L/2)^{s/2}}{(Mb\,s^2C_L)^{(s-1)/2}}.
\]
The desired cross Gram matrix is $\sqrt v R$.
Choose
\[
 m\ge\frac{\log\{2s/(1-\theta)\}}{-\log(1-\alpha)},\qquad L=sm.
\]
Both selections then succeed with probability at least
$1-2s(1-\alpha)^m\ge\theta$.
\end{proof}

\paragraph{Sampling the actual nonidentical array without replacement.}
Under \textup{(A1)}, we modify the selection argument by also controlling the
Euclidean norms of the selected left vectors. Assume the comparison
$r\le C_\Sigma v$ from \textup{(A3)(i)}, write
$r=\tr\Sigma$, and take $s=6$, $v\ge11/2$.
Let $K_4$ satisfy $\mathbb E|u^\top\boldsymbol X_i|^4
\le K_4(u^\top\Sigma_i u)^2$, as required by \textup{(A2)}, and let
$\boldsymbol X_i'$ be independent copies. Expose $Y_i=\sqrt n(\boldsymbol X_i-\boldsymbol X_i')/2$ in a uniformly
random order without replacement. The filtration reveals only exposed
indices and vectors, so the unexposed vectors retain their laws.
After $k\le2L$ exposures, for $n\ge4L$ the conditional covariance is
\[
 Q_k=\frac n{2(n-k)}\sum_{i\text{ unexposed}}\Sigma_i,\qquad
 \frac12\Sigma-\frac{C_1k}{n}I\preceq Q_k\preceq\Sigma.
\]
The directional fourth moments satisfy
\[
 \begin{split}
 \mathbb E[|u^\top Y_{\rm next}|^4\mid\mathcal F]
 &\le\frac{n^2(2K_4+6)}{16(n-k)}
       \sum_{i\text{ unexposed}}(u^\top\Sigma_i u)^2\\
 &\le C\|u\|^2\,u^\top\Sigma u.
 \end{split}
\]
Here and below, constants may depend on $C_1,K_4,C_\Sigma$.
Conditional Cauchy--Schwarz, followed by Cauchy--Schwarz over the
rows of a real matrix $A$, consequently gives
\[
 \mathbb E[\|AY_{\rm next}\|^4\mid\mathcal F]
 \le C\tr(AA^\top)\tr(A\Sigma A^\top).
\]

Suppose $j<6$ left vectors have been selected, let $P$ project onto
the span of their images under $\Sigma^{1/2}$, and set
$R=\|(I-P)\Sigma^{1/2}Y_{\rm next}\|^2$. Then
\[
 \mathbb E[R\mid\mathcal F]
 \ge\frac12(v-j)-\frac{2C_1Lr}{n}\ge\frac v{44},
 \qquad
 \mathbb E[R^2\mid\mathcal F]\le Crv\le CC_\Sigma v^2,
\]
provided $n\ge88C_1LC_\Sigma$, since $v-j\ge v-5\ge v/11$.
Paley--Zygmund gives a fixed positive lower bound on the probability
of $R\ge v/88$. Also $\mathbb E[\|Y_{\rm next}\|^2\mid\mathcal F]\le r$.
For a sufficiently large fixed $M_L$, Markov's inequality therefore
shows that
\[
 R\ge v/88,\qquad \|Y_{\rm next}\|^2\le M_Lr
\]
hold simultaneously with conditional probability at least a fixed
$\alpha_L>0$.
Use six consecutive blocks of $m$ left draws and select the first
acceptable vector in each block. Except on an event of probability
at most $6(1-\alpha_L)^m$, the resulting matrix $Z$ satisfies
\[
 \det(Z^\top\Sigma Z)\ge(v/88)^6,\qquad
 \tr(Z^\top\Sigma Z)\le6M_Lr,
\]
and hence, for fixed positive $c_L,C_L,C_Z$,
\[
 c_LvI_6\preceq Z^\top\Sigma Z\preceq C_LvI_6,\qquad
 Z^\top Z\preceq C_ZrI_6.
\]
The last bound is the additional control needed under \textup{(A1)}.

Condition on a successful left history and put
$U=Z^\top Y_{\rm next}/\sqrt v$ and $A_Z=Z^\top\Sigma Z/v$.
For $n\ge8C_1LC_ZC_\Sigma/c_L$,
\[
 \frac{c_L}{4}I_6
 \preceq\frac12A_Z-\frac{2C_1L}{nv}Z^\top Z
 \preceq\Cov(U\mid\mathcal F)\preceq C_LI_6.
\]
Moreover,
\[
 \mathbb E[|w^\top U|^4\mid\mathcal F]
 \le\frac C{v^2}\|Zw\|^2\,w^\top Z^\top\Sigma Zw
 \le C'\|w\|^4.
\]
For every projection $P'$ of rank at most five, these bounds give
a fixed positive lower bound on $\mathbb E\|(I-P')U\|^2$,
a fixed upper bound on $\mathbb E\|(I-P')U\|^4$, and a fixed
upper bound on $\mathbb E\|U\|^2$, conditionally on the history.
Paley--Zygmund and Markov thus give fixed
$a_R,M_R,\alpha_R>0$ such that
\[
 \|(I-P')U\|^2\ge a_R,\qquad \|U\|^2\le M_R
\]
hold with conditional probability at least $\alpha_R$.
The same six-block selection on the right gives a matrix of projected
columns with determinant squared at least $a_R^6$ and operator norm
squared at most $6M_R$. Its singular values lie in a fixed interval
$[c,C]$, so those of the cross Gram matrix lie in
$[c\sqrt v,C\sqrt v]$.

For any fixed $0<\theta<1$, choose $m$ so that
$6(1-\alpha_L)^m+6(1-\alpha_R)^m\le1-\theta$, and put $L=6m$.
Then choose a fixed $n_F$ to cover all displayed sample-size
conditions. For $n\ge n_F$, two disjoint collections of $L$ actual
summands therefore yield the required six-column cross matrix
with probability at least $\theta$.
The constants $c,C$ can be chosen independently of $\theta$.

\paragraph{The size condition for $s=6$.}
The inequality $v-5\ge v/11$, together with $r\le C_\Sigma v$,
ensures the residual bound above uniformly for $v\ge11/2$.
No lower bound on the smallest eigenvalue and no additional condition
on $p$ is used. The constants now may depend on $C_\Sigma$, but remain
uniform for $v\ge11/2$.

\section{The arithmetic integral}\label{sec:appendixC}
This appendix estimates the characteristic function after the cutoff
used in Proposition~\ref{prop:moderate}. The argument has two parts.
First, Lemma~\ref{lem:arithmetic} gives an integral bound for a bilinear
form in integer-valued sign sums, uniformly over a fixed-size matrix.
Second, Proposition~\ref{prop:high-frequency} reduces the original,
possibly nonidentically distributed array to these bilinear forms.
The random cross frames in Appendix~\ref{sec:appendixB} make the matrix
size fixed and its singular values bounded independently of $p$.

The source results used below are the real symmetrization inequality
of \citet[Lemma~6.1]{GoetzeZaitsev2014} and the lattice rotation estimate
of \citet[Theorem~6.11]{GoetzeMargulis2010}. We state their relevant
forms at the point of use. The other reductions, including all changes of scale,
are given below. No bound on the characteristic function at every
large frequency is asserted: the required estimate is an integral
bound and permits peaks caused by arithmetic structure.

For reference, the changes of scale in the array application are
\begin{gather*}
 M=\lfloor n/(3L)\rfloor,\qquad
 q=\lfloor3M/8\rfloor,\qquad R=\sqrt q,\\
 u=\frac{4t\sqrt v}{n},\qquad
 u_0=q^{-2/3},\qquad
 t_0'=\frac n{4\sqrt v}q^{-2/3}.
\end{gather*}
Here $L$ is a fixed frame size, $s=6$, $\beta=1/2$, and
$dt/t=du/u$. Selecting rows with sufficiently many good neighbors
before applying H\"older preserves the exponent $1/2$.
This selection is related to \citet[Lemma~6.5]{BentkusGoetze1999};
we keep the H\"older products until after the final square root.
The two frequency ranges in the arithmetic lemma contribute as follows:
\[
\begin{array}{c|c|c}
 \text{range of }u & \text{estimate used} &
            \displaystyle\int K_q(u;A)^{1/2}\,du/u\\ \hline
 \lbrack u_0,q^{-1/2}\rbrack & K_q(u;A)\le C(1+q^2u^2)^{-s/2} & C/q\\
 \lbrack q^{-1/2},1\rbrack & \text{lattice rotation estimate} & C/q
\end{array}
\]
The lower-frequency comparison for the original sum remains in
Appendix~\ref{sec:appendixA}; this appendix starts where that comparison
can be joined to the arithmetic bound.

\subsection{The arithmetic estimate for a fixed matrix}

\begin{lemma}\label{lem:arithmetic}
Fix an integer $s\ge1$ and $0<\beta\le1$ with $\beta s>2$.
Let $A\in\mathbb R^{s\times s}$ be real and satisfy
\[
 a_0\le s_{\min}(A)\le s_{\max}(A)\le a_1,
 \qquad 0<a_0\le a_1<\infty.
\]
For an integer $M\ge1$, let $R_M,R'_M$ be independent random vectors
whose coordinates are independent copies of
\[
 \frac12\sum_{j=1}^M(\epsilon_j-\epsilon'_j),
\]
where all signs are independent and uniform on $\{-1,1\}$. Define
\[
 K_M(u;A)=\mathbb E e^{iuR_M^\top A R'_M},
 \qquad u_0=M^{-1+1/(\beta s)}.
\]
Then $0\le K_M(u;A)\le1$ for every real $u$, and
\begin{equation}\label{eq:C-arithmetic-integral}
 \int_{u_0}^1K_M(u;A)^\beta\,\frac{du}{u}\le\frac{C}{M}.
\end{equation}
The constant depends only on $s,\beta,a_0,a_1$.
The matrix $A$ need not be symmetric.
\end{lemma}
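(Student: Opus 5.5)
First, $K_M(u;A)$ is real and lies in $[0,1]$. Each coordinate of $R'_M$ is a symmetric integer-valued variable, $\tfrac12\sum_j(\epsilon_j-\epsilon'_j)$, whose characteristic function is $\psi_M(\theta)=\cos^{2M}(\theta/2)\ge0$. Conditioning on $R_M$ gives
\[
 K_M(u;A)=\mathbb E\prod_{k=1}^s\psi_M\bigl(u(A^\top R_M)_k\bigr)\in[0,1].
\]
This representation will be used throughout. The integral in \eqref{eq:C-arithmetic-integral} is then split at $u=M^{-1/2}$, following the table preceding the lemma.

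\emph{Lower range $[u_0,M^{-1/2}]$.} Here I aim for the bound $K_M(u;A)\le C(1+M^2u^2)^{-s/2}$. On $|\theta|\le\pi$ one has $\psi_M(\theta)\le e^{-cM\theta^2}$. Since $|u(A^\top R_M)_k|$ is typically of order $uM^{1/2}a_1\le a_1$, this gives $K_M\lesssim\mathbb E\exp\{-cMu^2\|A^\top R_M\|^2\}$ on the main event. For large values of $\|A^\top R_M\|$, one uses periodicity of $\psi_M$ together with the count of lattice points of $A^\top R_M$ near $2\pi u^{-1}\mathbb Z^s$; the local limit theorem makes this count negligible for $u\le M^{-1/2}$. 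The vector $R_M$ is an integer lattice walk with local probabilities $\lesssim M^{-s/2}$, and $\|A^\top x\|\ge a_0\|x\|$. Hence
\[
 \mathbb E e^{-cMu^2\|A^\top R_M\|^2}\lesssim\sum_{x\in\mathbb Z^s}M^{-s/2}e^{-ca_0^2Mu^2\|x\|^2}+e^{-cM}\lesssim M^{-s/2}(1+(Mu^2)^{-s/2})\lesssim(M^2u^2)^{-s/2}
\]
whenever $Mu^2\le1$. The sum over integer points is comparable to $(Mu^2)^{-s/2}$ plus one. Raising to the power $\beta$ and integrating against $du/u$ gives
\[
 \int_{u_0}^{\infty}(Mu)^{-\beta s}\,\frac{du}{u}\lesssim(Mu_0)^{-\beta s}=M^{-1},
\]
by the choice $u_0=M^{-1+1/(\beta s)}$. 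This range is routine; the only care needed is the aliasing just described.

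\emph{Upper range $[M^{-1/2},1]$.} Here $K_M$ is no longer monotonically small, since it has arithmetic peaks when $uA$ is close to a rational matrix with small denominators. I would first use the Gaussian-like profile of $\psi_M$ to write
\[
 K_M(u;A)\approx\mathbb E\sum_{m\in\mathbb Z^s}\exp\bigl\{-cM\|uA^\top R_M-2\pi m\|^2\bigr\}.
\]
Doing the same in $R_M$ and applying the symmetrization inequality of \citet[Lemma~6.1]{GoetzeZaitsev2014} bounds $K_M$ by $M^{-s}$ times a count of pairs $(x,m)\in\mathbb Z^{2s}$ in a box. Concretely, one counts pairs with $\|x\|\lesssim M^{1/2}$ and $\|u A^\top x-2\pi m\|\lesssim M^{-1/2}$. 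This is a count of lattice points of the unimodular-type lattice $\Lambda_u=\{(x,uA^\top x-2\pi m)\}$ after the diagonal scaling $d_{M^{1/2}}$. Its integral over $u$ is exactly the setting of the lattice rotation estimate \citet[Theorem~6.11]{GoetzeMargulis2010}. That theorem bounds $\int\alpha(\Lambda_u)^{\beta}\,du/u$, where $\alpha$ is the successive-minima functional, uniformly over $A$ with singular values in $[a_0,a_1]$. The bound is $O(M^{-1})$ precisely when $\beta s>2$; this is where that condition enters.

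The main obstacle is this upper range. One must convert $K_M^\beta$, with the power $\beta$ taken after averaging, into the lattice functional $\alpha(\cdot)^\beta$ without losing the exponent. One must also check that the constant of Theorem~6.11 depends only on $s,\beta,a_0,a_1$, and in particular not on whether $A$ is symmetric. I would try to handle non-symmetry by passing to the $2s$-dimensional lattice generated by $\begin{pmatrix}I&0\\ uA^\top&I\end{pmatrix}$, which requires no symmetry. Adding the two ranges yields \eqref{eq:C-arithmetic-integral}.
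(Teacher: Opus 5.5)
Your positivity argument and the split at $u=M^{-1/2}$ agree with the paper. Your route to a lattice sum is also sound, and slightly more direct than the paper's comparison with a discrete Gaussian followed by exact Poisson summation. From $\psi_M(\theta)=(1-\sin^2(\theta/2))^M\le\exp\{-cM\operatorname{dist}(\theta,2\pi\mathbb Z)^2\}$ and $\mathbb P(R_M=x)\le CM^{-s/2}e^{-\|x\|^2/(2M)}$ one gets, pointwise in $u$,
\[
 K_M(u;A)\le CM^{-s/2}\sum_{x,m\in\mathbb Z^s}
 \exp\bigl\{-cM\|uA^\top x-2\pi m\|^2-\|x\|^2/(2M)\bigr\}.
\]
This is the paper's bound \eqref{eq:HF-poisson} with $B$ replaced by $A^\top/(2\pi)$. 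In the lower range, however, the aliased terms $m\ne0$ are not $O(e^{-cM})$. If $a_1$ is large, $\|uA^\top R_M\|$ is of order $a_1$ with probability bounded below already at $u=M^{-1/2}$. These terms are of the same order $(Mu)^{-s}$ as the main term. They are handled by completing the square jointly in $(x,m)$, as in \eqref{eq:C-square-completion}, so this is a repairable slip.

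The genuine gap is the upper range. First, the bookkeeping is off. The prefactor is $M^{-s/2}$, not $M^{-s}$: indeed $K_M\ge\mathbb P(R_M=0)\ge cM^{-s/2}$ for every $u$, while the lattice count is $O(1)$ for typical $u$. The symmetrization inequality of \citet{GoetzeZaitsev2014} plays no role, since $K_M$ is already a bilinear form in independent vectors. There is also no difficulty with taking the power $\beta$ after averaging. The bound $\sum_{y\in L}e^{-c\|y\|^2}\le C\alpha(L)$ turns the display into $K_M\le CR^{-s}\alpha(D_RU_u\Lambda)$, with $R=\sqrt M$ and $\Lambda=\operatorname{diag}(I_s,B)\mathbb Z^{2s}$, pointwise in $u$.

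More seriously, Theorem~6.11 of \citet{GoetzeMargulis2010} is not ``exactly the setting''. It bounds a \emph{rotation} average at a fixed diagonal element, $\int_0^{2\pi}\alpha(D_\rho O_\theta L)^\beta\,d\theta\le C\alpha(L)^\beta\max(\rho,\rho^{-1})^{\beta s-2}$. You must instead integrate along the shear $u\in[R^{-1},1]$ against $du/u$. Nor is $\int_{R^{-1}}^1\alpha(D_RU_u\Lambda)^\beta\,du/u$ of order $M^{-1}$. Since $\alpha\ge1$, it is at least $\log R$, and the achievable bound is $CR^{\beta s-2}$. The rate $1/M$ appears only after multiplying by $R^{-\beta s}$.

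The missing argument is the reduction of the shear integral to rotation integrals:
\begin{enumerate}
\item Cut $[R^{-1},1]$ into intervals $[1/j,1/(j-1)]$ and write $u=1/j+w/j^2$ with $w\in[0,2]$, so that $D_RU_u\Lambda=D_{R/j}U_w\Lambda_j$ with $\Lambda_j=D_jU_{1/j}\Lambda$.
\item Prove the uniform height bound $\alpha(\Lambda_j)\le C$. A short vector $(jm-Bk,\,j^{-1}Bk)$ forces $m=0$, and then $\|Bk\|\ge a_0/(2\pi)$. This is where $s_{\min}(A)\ge a_0$ and the uniformity in $A$ actually enter; the constant in Theorem~6.11 depends only on $s$ and $\beta$.
\item Replace the short shear $U_w$ by the rotation $O_{\arctan w}$, at the cost of a bounded linear map.
\item Apply Theorem~6.11 with $\rho=R/j\ge1/3$ to get $(C/j)(R/j)^{\beta s-2}$ per interval.
\item Sum over $j$; the series converges because $\beta s>2$. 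Together with the prefactor $R^{-\beta s}$ this gives $CR^{-2}=C/M$.
\end{enumerate}
Without this decomposition and the height bound, citing the rotation estimate does not yield \eqref{eq:C-arithmetic-integral}.
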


\begin{proof}
The case $M=1$ has an empty integration interval; its positivity claim
follows from the same conditional characteristic-function argument in
Step~1 below. For the integral estimate assume $M\ge2$,
and put $R=\sqrt M$. All constants in this proof may depend on the
fixed parameters in the statement.

\medskip\noindent\textit{Step 1. Comparison with a discrete Gaussian.}
The mass function and characteristic function of one coordinate of
$R_M$ are
\[
 p_M(k)=4^{-M}\binom{2M}{M+k},\quad |k|\le M,
 \qquad \widehat p_M(x)=\cos^{2M}(x/2)\ge0.
\]
Conditioning on $R'_M$ therefore expresses $K_M$ as an expectation of
a product of nonnegative characteristic functions; in particular
$0\le K_M\le1$. For $0\le k\le M$,
\begin{align*}
 \frac{p_M(k)}{p_M(0)}
 &=\prod_{j=1}^k\left(1-\frac{2j-1}{M+j}\right)\\
 &\le\exp\left(-\sum_{j=1}^k\frac{2j-1}{M+j}\right)
 \le e^{-k^2/(2M)}.
\end{align*}
Symmetry gives the same inequality for negative $k$.
Since $p_M(0)\le C/\sqrt M$, the discrete Gaussian probability
\[
 q_M(k)=Z_M^{-1}e^{-k^2/(2M)},\qquad
 Z_M=\sum_{k\in\mathbb Z}e^{-k^2/(2M)}\asymp\sqrt M
\]
satisfies $p_M(k)\le Cq_M(k)$ for all integers $k$.
Poisson summation gives
\begin{equation}\label{eq:C-q-transform}
 \widehat q_M(x)=\frac{\sqrt{2\pi M}}{Z_M}
 \sum_{m\in\mathbb Z}e^{-M(x-2\pi m)^2/2}\ge0.
\end{equation}

Let $\zeta_M,\zeta'_M$ be independent of one another, each having
independent coordinates with mass $q_M$.
The two comparisons of laws must be made in the following order.
Write $\varphi_p(x)=\prod_{j=1}^s\widehat p_M(x_j)$ and
$\varphi_q(x)=\prod_{j=1}^s\widehat q_M(x_j)$. Then
\begin{align*}
 K_M(u;A)
 &=\mathbb E_{R'_M}\varphi_p(uAR'_M)\\
 &\le C_s\mathbb E_{\zeta'_M}\varphi_p(uA\zeta'_M)
 =C_s\mathbb E_{R_M}\varphi_q(uA^\top R_M)\\
 &\le C_s'\mathbb E_{\zeta_M}\varphi_q(uA^\top\zeta_M)
 =C_s'\mathbb E e^{iu\zeta_M^\top A\zeta'_M}.
\end{align*}
The first inequality uses $\varphi_p\ge0$, and the second uses
\eqref{eq:C-q-transform}. Thus no pointwise domination is applied to
an oscillating signed integrand.

\medskip\noindent\textit{Step 2. Poisson summation and its normalization.}
Put $B=A/(2\pi)$. Conditioning on $\zeta'_M=k$ and applying
\eqref{eq:C-q-transform} coordinate by coordinate gives the exact identity
\begin{align*}
 \mathbb E e^{iu\zeta_M^\top A\zeta'_M}
 =\frac{(2\pi M)^{s/2}}{Z_M^{2s}}
 \sum_{m,k\in\mathbb Z^s}
 \exp\left\{-2\pi^2M\|m-uBk\|^2-\frac{\|k\|^2}{2M}\right\}.
\end{align*}
Consequently,
\begin{equation}\label{eq:HF-poisson}
 K_M(u;A)\le C R^{-s}
 \sum_{m,k\in\mathbb Z^s}
 \exp\{-cR^2\|m-uBk\|^2-cR^{-2}\|k\|^2\}.
\end{equation}
The factor $R^{-s}$ comes from
$(2\pi M)^{s/2}/Z_M^{2s}\asymp M^{-s/2}$.
We estimate \eqref{eq:HF-poisson} first on $[u_0,R^{-1}]$ and then
on $[R^{-1},1]$. These intervals are in the stated order because
$\beta s>2$ implies $u_0\le M^{-1/2}=R^{-1}$.

\medskip\noindent\textit{Step 3. Completing the square when $u\le R^{-1}$.}
For fixed $m$, define
\[
 H=R^{-2}I_s+u^2R^2B^\top B,\qquad
 z_m=H^{-1}uR^2B^\top m,\qquad
 S=R^2(I_s+u^2R^4BB^\top)^{-1}.
\]
Expansion of the right-hand side, or the matrix inversion identity,
gives
\begin{equation}\label{eq:C-square-completion}
 R^2\|m-uBk\|^2+R^{-2}\|k\|^2
 =(k-z_m)^\top H(k-z_m)+m^\top S m.
\end{equation}
Writing $h=R^{-2}(1+u^2R^4)$, the singular-value bounds on $B$ imply
\[
 c h I_s\preceq H\preceq C h I_s,
 \qquad S\succeq\frac{cR^2}{1+u^2R^4}I_s.
\]
On the range $u\le R^{-1}$ and $R\ge1$, we have $h\le2$ and
$R^2/(1+u^2R^4)\ge1/2$. The elementary one-dimensional estimate
\[
 \sup_{z\in\mathbb R}\sum_{k\in\mathbb Z}e^{-ch(k-z)^2}
 \le C(1+h^{-1/2})\le C h^{-1/2},\qquad 0<h\le2,
\]
follows by grouping points according to their distance from $z$ and
comparing the Gaussian tail with its integral. Applying it in each
coordinate to \eqref{eq:C-square-completion}, and then summing over $m$,
proves
\begin{equation}\label{eq:C-small-u}
 K_M(u;A)\le C(1+M^2u^2)^{-s/2},\qquad 0\le u\le M^{-1/2}.
\end{equation}
Since $Mu_0=M^{1/(\beta s)}\ge1$,
\[
 \int_{u_0}^{M^{-1/2}}K_M(u;A)^\beta\,\frac{du}{u}
 \le C M^{-\beta s}\int_{u_0}^{\infty}u^{-\beta s-1}\,du
 =\frac{C}{\beta s}(Mu_0)^{-\beta s}\le\frac CM.
\]

\medskip\noindent\textit{Step 4. Lattice height and the source rotation estimate.}
A full-rank lattice in $\mathbb R^d$ is a set $L=G\mathbb Z^d$ for an
invertible matrix $G$. Its covolume is $|\det G|$.
For a rank-$j$ sublattice $L'$ with basis $v_1,\ldots,v_j$, its covolume
in its linear span is
\[
 \operatorname{covol}(L')
 =\det\bigl((\langle v_a,v_b\rangle)_{a,b=1}^j\bigr)^{1/2}.
\]
Using the convention in equations~(5.2)--(5.3) of
\citet{GoetzeMargulis2010}, set
\[
 \alpha_j(L)=\sup_{\substack{L'\subset L\\\operatorname{rank}L'=j}}
            \operatorname{covol}(L')^{-1},\qquad
 \alpha_0(L)=1,\qquad \alpha(L)=\max_{0\le j\le d}\alpha_j(L).
\]
The rank-zero contribution ensures $\alpha(L)\ge1$. For positive
ranks it is enough to take primitive sublattices, because enlarging
a sublattice inside its span can only decrease its covolume.

We use the standard Gaussian lattice bound
\begin{equation}\label{eq:C-lattice-gaussian}
 \sum_{x\in L}e^{-c\|x\|^2}\le C_{d,c}\alpha(L).
\end{equation}
Here is how the hypotheses in the cited source are met. If the shortest
nonzero vector has length at most one, Lemmas~7.3--7.4 of
\citet{GoetzeZaitsev2014} give the bound by $C\alpha(L)$ through their
lattice-point counting estimates. If that length exceeds one, the
balls of radius $1/2$ around lattice points are disjoint, whence
$\#\{x\in L:\|x\|\le t\}\le(2t+1)^d$.
Summing over integer shells proves a constant bound in this case,
which is absorbed because $\alpha(L)\ge1$.

In dimension $d=2s$, introduce
\[
 D_\rho=\begin{pmatrix}\rho I_s&0\\0&\rho^{-1}I_s\end{pmatrix},\quad
 U_u=\begin{pmatrix}I_s&-uI_s\\0&I_s\end{pmatrix},\quad
 O_\theta=\begin{pmatrix}\cos\theta I_s&-\sin\theta I_s\\
                  \sin\theta I_s&\cos\theta I_s\end{pmatrix}.
\]
The nontrivial external input, in the present coordinate ordering, is
\begin{equation}\label{eq:HF-lattice-moment}
 \int_0^{2\pi}\alpha(D_\rho O_\theta L)^\beta\,d\theta
 \le C_{s,\beta}\alpha(L)^\beta
                 \max(\rho,\rho^{-1})^{\beta s-2},
\end{equation}
for every lattice $L\subset\mathbb R^{2s}$, every $\rho>0$, and
$\beta>0$ with $\beta s>2$. This is Theorem~6.11 of the 2010 version
of \citet{GoetzeMargulis2010}.
To match the source, take its $2\times2$ matrix
$H=\operatorname{diag}(\rho,\rho^{-1})\in\operatorname{SL}(2,\mathbb R)$
and its parameter $d=s$. Permuting coordinates from
$(x_1,y_1,\ldots,x_s,y_s)$ to $(x_1,\ldots,x_s,y_1,\ldots,y_s)$
is orthogonal and preserves all covolumes. The source statement is
for arbitrary lattices, so it imposes no symmetry condition on $A$.

\medskip\noindent\textit{Step 5. The lattice in the Poisson sum.}
Let
\[
 \Lambda=\operatorname{diag}(I_s,B)\mathbb Z^{2s}.
\]
A vector of $D_RU_u\Lambda$ has the form
$(R(m-uBk),R^{-1}Bk)$. Because $B$ and $B^{-1}$ have bounded operator
norms, its squared norm is comparable, with fixed constants, to
$R^2\|m-uBk\|^2+R^{-2}\|k\|^2$.
Also
\[
 \operatorname{covol}(D_RU_u\Lambda)=|\det B|
 \in[(a_0/(2\pi))^s,(a_1/(2\pi))^s].
\]
Equations \eqref{eq:HF-poisson} and \eqref{eq:C-lattice-gaussian}
therefore give
\begin{equation}\label{eq:C-kernel-height}
 K_M(u;A)\le C R^{-s}\alpha(D_RU_u\Lambda).
\end{equation}
It remains to integrate this expression for $R^{-1}\le u\le1$.

Partition that interval using the intervals
$[1/j,1/(j-1)]$, $2\le j\le\lceil R\rceil+1$; extending an end
interval causes no problem because the integrand is nonnegative.
For each such integer define
\begin{equation}\label{eq:C-reference-lattice}
 \Lambda_j=D_jU_{1/j}\Lambda
 =\begin{pmatrix}jI_s&-B\\0&j^{-1}B\end{pmatrix}\mathbb Z^{2s}.
\end{equation}
We now verify the uniform bound $\alpha(\Lambda_j)\le C$.
If $(x,y)=(jm-Bk,j^{-1}Bk)\in\Lambda_j$ has sufficiently small norm,
then
\[
 m=x/j+y\in\mathbb Z^s
\]
has norm less than one and must vanish. For $m=0$ and $k\ne0$,
$\|x\|=\|Bk\|\ge a_0/(2\pi)$, a contradiction.
Thus every nonzero vector in every $\Lambda_j$ has length at least
some fixed $\varepsilon>0$. Minkowski's first theorem, applied in the
span of any rank-$l$ sublattice $L'\subset\Lambda_j$, gives
\[
 \operatorname{covol}(L')\ge 2^{-l}\omega_l\varepsilon^l,
 \qquad 1\le l\le2s,
\]
where $\omega_l$ is the volume of the unit Euclidean ball in
$\mathbb R^l$. This proves
\begin{equation}\label{eq:C-reference-height}
 \alpha(\Lambda_j)\le
 \max\left\{1,\max_{1\le l\le2s}\frac{2^l}{\omega_l\varepsilon^l}\right\}\le C.
\end{equation}

\medskip\noindent\textit{Step 6. Each reciprocal interval is a rotation integral.}
On $[1/j,1/(j-1)]$, write
\[
 u=\frac1j+\frac w{j^2},\qquad
 0\le w\le\frac j{j-1}\le2,
 \qquad \frac{du}{u}=\frac{dw}{j+w}\le\frac{dw}{j}.
\]
Direct multiplication yields
\begin{equation}\label{eq:C-cell-factorization}
 D_RU_u\Lambda=D_{R/j}U_w\Lambda_j.
\end{equation}
Put $\rho=R/j$. Since $j\le\lceil R\rceil+1$ and $R\ge1$,
$\rho\ge1/3$. Define
\[
 E_{\rho,w}=\begin{pmatrix}I_s&0\\
                    w\rho^{-2}I_s&(1+w^2)I_s\end{pmatrix}.
\]
Both $E_{\rho,w}$ and its inverse have bounded operator norms for
$\rho\ge1/3$ and $0\le w\le2$, and
\begin{equation}\label{eq:C-shear-rotation}
 E_{\rho,w}D_\rho U_w
 =D_\rho\begin{pmatrix}I_s&-wI_s\\wI_s&I_s\end{pmatrix}
 =\sqrt{1+w^2}\,D_\rho O_{\arctan w}.
\end{equation}
A linear map with itself and its inverse bounded by $K\ge1$ changes
any rank-$l$ covolume by a factor in $[K^{-l},K^l]$.
Thus \eqref{eq:C-shear-rotation}, including its bounded scalar factor,
implies
\[
 \alpha(D_\rho U_w\Lambda_j)
 \le C\alpha(D_\rho O_{\arctan w}\Lambda_j).
\]
For $\theta=\arctan w$, the Jacobian is
$dw=(1+w^2)d\theta\le5d\theta$.
Using \eqref{eq:HF-lattice-moment} and
\eqref{eq:C-reference-height}, we obtain
\begin{align*}
 \int_{1/j}^{1/(j-1)}\alpha(D_RU_u\Lambda)^\beta\,\frac{du}{u}
 &\le\frac Cj\int_0^{2\pi}
       \alpha(D_{R/j}O_\theta\Lambda_j)^\beta\,d\theta\\
 &\le\frac Cj\max(R/j,j/R)^{\beta s-2}
 \le\frac Cj(R/j)^{\beta s-2}.
\end{align*}
The last inequality uses $R/j\ge1/3$, so that replacing the maximum
by $R/j$ changes only a fixed constant.

\medskip\noindent\textit{Step 7. Summation of the intervals.}
The series $\sum_{j\ge2}j^{-(\beta s-1)}$ converges because
$\beta s>2$. Applying \eqref{eq:C-kernel-height} and the preceding
bound gives
\begin{align*}
 \int_{R^{-1}}^1K_M(u;A)^\beta\,\frac{du}{u}
 &\le C R^{-\beta s}R^{\beta s-2}
         \sum_{j=2}^{\lceil R\rceil+1}j^{-(\beta s-1)}\\
 &\le C R^{-2}=C/M.
\end{align*}
Together with Step~3, this proves \eqref{eq:C-arithmetic-integral}.
\end{proof}

\subsection{Reduction of the nonidentical array to fixed matrices}

\begin{proposition}\label{prop:high-frequency}
Assume \textup{(A1), (A2), (A3)(i)}, and
$v=\tr(\Sigma^2)\ge11/2$.
There are constants $n_H,C_H,C>0$, depending only on the fixed
constants $C_1,K_4,C_\Sigma$, such that for $n\ge n_H$, with
\[
 t_0=C_H n^{1/3}/\sqrt v,\qquad 0<U\le n/(4\sqrt v),
\]
one has
\begin{equation}\label{eq:C-high-frequency-integral}
 \int_{t_0}^U|\mathfrak f(t)|\,\frac{dt}{t}\le C/n
 \qquad\text{if }U>t_0.
\end{equation}
The choice $U=n/r^2$, where $r=\tr\Sigma$, is admissible:
$\|\Sigma\|=1$ and $v\ge11/2$ imply $v\le r$, $r\ge11/2$, and hence
$4\sqrt v\le r^2$.
\end{proposition}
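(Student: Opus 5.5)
The plan is to follow the Götze--Zaitsev route. First reduce $|\mathfrak f(t)|$, by symmetrization, to an expectation of a characteristic function of a bilinear form in integer-valued sign sums with a random $6\times6$ coefficient matrix. Then use Lemma~\ref{lem:cross-frames} (in its without-replacement form for the actual nonidentical array) to make that matrix well conditioned with high probability. Finally, integrate with Lemma~\ref{lem:arithmetic} for $s=6$, $\beta=1/2$, which is admissible because $\beta s=3>2$.

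\emph{Step 1: symmetrization.} Fix the frame size $L=6m$ from the cross-frame construction with some fixed $\theta$, say $\theta=3/4$, and put $M=\lfloor n/(3L)\rfloor$. Using a uniformly random order, split the indices into $M$ groups of $3L$ consecutive indices, plus a remainder that is simply conditioned upon. The unexposed summands keep their laws, so this reorganization is harmless. In each group, two disjoint subcollections of $L$ summands play the roles of left and right candidates, and the rest is treated as background. I would apply the real symmetrization inequality of \citet[Lemma~6.1]{GoetzeZaitsev2014} to the quadratic phase $e^{it\|x\|^2/2}$. Twice symmetrizing over independent copies $\boldsymbol X_i'$ removes the diagonal terms and gives
\[
 |\mathfrak f(t)|^2\le\mathbb E\exp\Bigl\{it\Bigl\langle\sum_g\epsilon_gY_g,\ \sum_{g'}\epsilon'_{g'}Y'_{g'}\Bigr\rangle\Bigr\},
\]
uniformly over real shifts. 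Here $Y=\sqrt n(\boldsymbol X-\boldsymbol X')/2$ is the normalization used in Appendix~\ref{sec:appendixB}. Within each group I would then condition on everything except the choice, through Rademacher selectors, among the frame vectors $Z_1,\dots,Z_6$ and $Z_1',\dots,Z_6'$. This puts the phase in the form $uR_q^\top A R_q'$. In it, $A=(\langle Z_j,Z'_k\rangle)/\sqrt v$, the vectors $R_q,R'_q$ are sign sums of the type in Lemma~\ref{lem:arithmetic} with $q=\lfloor3M/8\rfloor$, and $u=4t\sqrt v/n$. The factor $3/8$ accounts for the groups discarded as bad in Step~2.

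\emph{Step 2: good groups.} By Lemma~\ref{lem:cross-frames}, each group independently has, with probability at least $\theta$, a frame whose cross Gram matrix has singular values in $[c\sqrt v,C\sqrt v]$; thus $A$ has singular values in a fixed interval $[a_0,a_1]$. Hoeffding's inequality then shows that at least $3M/8$ groups are good, except on an event of probability $e^{-cn}$. The delicate point is that different good groups give different matrices $A$. I would handle this by first selecting rows with sufficiently many good neighbours, in the spirit of \citet[Lemma~6.5]{BentkusGoetze1999}, and then applying H\"older's inequality over at most a fixed number of distinct frame configurations. The H\"older products are kept until after the final square root, so that the bound reads $|\mathfrak f(t)|\le \sup_A K_q(u;A)^{1/2}+e^{-cn}$, with exponent $\beta=1/2$ intact. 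I expect this to be the main obstacle. A naive H\"older split loses powers in the exponent, and the condition $\beta s>2$ leaves no room for that loss; moreover, the neighbour selection must be deterministic given the covariances, so that the nonidentical laws do not interfere.

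\emph{Step 3: integration.} Because $dt/t=du/u$, the range $t\in[t_0,U]$ maps into $u\in[4t_0\sqrt v/n,\,4U\sqrt v/n]$. The upper end is at most $1$ by the hypothesis $U\le n/(4\sqrt v)$. The lower end is $4C_Hn^{-2/3}$, which is at least $u_0=q^{-2/3}$ once $C_H$ is chosen large, since $q\asymp n$. Lemma~\ref{lem:arithmetic} then bounds the main term by $C/q\le C/n$. The exceptional contribution is at most $e^{-cn}\log(U/t_0)\le e^{-cn}\log n\le C/n$. It remains to choose $n_H$ to cover the sample-size conditions of the cross-frame construction and $q\ge2$. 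The admissibility of $U=n/r^2$ is immediate: $\|\Sigma\|=1$ gives $v\le r$, and $r\ge v\ge11/2$ gives $r^2\ge r^{3/2}\sqrt r\ge 4\sqrt r\ge 4\sqrt v$.
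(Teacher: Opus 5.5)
Your overall architecture matches the paper: Götze--Zaitsev symmetrization, random cross frames with $s=6$, Lemma~\ref{lem:arithmetic} with $\beta=1/2$, and the same change of variables. There is, however, a genuine gap in Steps~2--3.

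\textbf{The supremum over matrices.} You arrive at a pointwise bound $|\mathfrak f(t)|\le\sup_AK_q(u;A)^{1/2}+e^{-cn}$ and then integrate it with Lemma~\ref{lem:arithmetic}. That lemma is an integral bound for one \emph{fixed} matrix. It allows arithmetic peaks of $K_q(\cdot;A)$ whose locations depend on $A$, and a supremum over all admissible $A$ fills these peaks in. For instance, if $a_0<a_1$, for each $u$ in a fixed interval you can take $A=\frac{2\pi p}{ju}I_6$, with $p/j$ of bounded denominator and singular values in $[a_0,a_1]$. Then $K_q(u;A)\ge\mathbb P\{j\text{ divides every coordinate of }R'_q\}\ge c>0$. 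So $\int\sup_AK_q(u;A)^{1/2}\,du/u$ is of order one, not $C/q$.

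The paper never takes such a supremum. It works conditionally on $\mathcal D=((B_i,V_i))_i$, through the exact sign representation $\boldsymbol X_i=(B_i+\epsilon_iV_i)/\sqrt n$, so every cross matrix $A_{kl}$ stays fixed as $t$ varies. It bounds $F_{ab}(t)^{1/2}$ by the arithmetic mean of $K_q(4t\sqrt v/n;A_{kl})^{1/2}$ over the $q^2$ selected pairs, using AM--GM after the square root. It then integrates each term in $t$ separately to get $C/q$. Only afterwards does it average over $\mathcal D$ by Tonelli, with $\mathcal E^c$ costing $\mathbb P(\mathcal E^c)\log n$.

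\textbf{How $R_q$ arises.} Conditioning on selectors does not by itself put the phase in the form $uR_q^\top AR'_q$, because different groups contribute different vectors. There is also no fixed number of frame configurations to apply H\"older over: there are $q^2\asymp n^2$ selected pairs, each with its own random matrix. The H\"older steps must have exponent $q$.
\begin{itemize}
\item Condition on the right sum $B$, and let $a_k$ be the nonnegative characteristic function of the $k$th left frame sum. H\"older over the $q$ selected rows replaces $\prod_ka_k(\xi B)$ by $\prod_k\{\mathbb Ea_k(\xi B)^q\}^{1/q}$.
\item The $q$th power is the characteristic function of $q$ independent copies of one frame sum.
\item Repeating this on the other side, with the $q$ good neighbours of each row, gives $F_{kl}$ with $q$ copies of the same frame on both sides.
\item Only then does deleting columns by positivity give $K_q(\cdot;A_{kl})$.
\end{itemize}
This requires a dense bipartite graph of good \emph{cross} pairs $(k,l)$, with $k$ and $l$ in different index groups, so that $q$ rows each have $q$ good neighbours. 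The paper obtains density at least $3/4$ by applying the without-replacement cross-frame construction to arbitrary pairs of frame positions with $\theta=15/16$, averaging over permutations, and using bounded differences. Your Hoeffding step only shows that many groups contain one good left--right pair of their own, which does not supply these neighbours.
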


\begin{proof}
\medskip\noindent\textit{Step 1. A fixed choice of frame parameters.}
Take $s=6$, and let $K_4$ be the directional fourth-moment constant
from \textup{(A2)}:
\[
 \mathbb E|u^\top\boldsymbol X_i|^4
 \le K_4(u^\top\Sigma_i u)^2.
\]
Use the without-replacement construction following
Lemma~\ref{lem:cross-frames}, with success probability $15/16$.
This gives fixed integers $L,n_F$ and constants $a_0,a_1>0$
such that, for $n\ge n_F$, two randomly selected disjoint frames of
$L$ actual symmetrized increments contain a $6\times6$ cross matrix
whose singular values lie in $[a_0\sqrt v,a_1\sqrt v]$, with probability
at least $15/16$. These constants depend only on $C_1,K_4,C_\Sigma$,
not on $p,n$, the smallest eigenvalue, or $\delta$.
We include $n\ge\max(n_F,24L)$ in $n_H$.
All fixed lower bounds on $n$ below are included in $n_H$.

\medskip\noindent\textit{Step 2. Exact sign representation of the array.}
Let $\boldsymbol X'_i$ be independent copies of $\boldsymbol X_i$, and put
\[
 B_i=\frac{\sqrt n}{2}(\boldsymbol X_i+\boldsymbol X'_i),\qquad
 V_i=\frac{\sqrt n}{2}(\boldsymbol X_i-\boldsymbol X'_i).
\]
With independent Rademacher signs $\epsilon_i$,
$(B_i+\epsilon_iV_i)/\sqrt n$ is $\boldsymbol X_i$ or
$\boldsymbol X'_i$, each with probability $1/2$.
These vectors are independent over $i$ and have exactly the original
marginal laws. Conditional on the pairs $\mathcal D=((B_i,V_i))_{i=1}^n$,
write
\[
 b=\sum_i B_i,\qquad
 \Psi_{\mathcal D}(t)=\mathbb E_\epsilon
     \exp\left\{\frac{it}{2n}
             \left\|b+\sum_i\epsilon_iV_i\right\|^2\right\}.
\]
It follows that
\begin{equation}\label{eq:C-outer-average}
 \mathfrak f(t)=\mathbb E_{\mathcal D}\Psi_{\mathcal D}(t),
 \qquad |\mathfrak f(t)|\le\mathbb E_{\mathcal D}|\Psi_{\mathcal D}(t)|.
\end{equation}
The vectors $B_i$ and $V_i$ need not be independent within a pair.
No such independence is used: the subsequent analysis is conditional
on the entire collection $\mathcal D$.

\medskip\noindent\textit{Step 3. Fixing the frames and obtaining many good pairs.}
Set $M=\lfloor n/(3L)\rfloor$. Given a permutation $\pi$ of the
indices, partition its first $3ML$ positions into three groups, each
containing $M$ frames of $L$ indices. Denote their index sets by
$\mathcal I_1,\mathcal I_2,\mathcal I_3$ and put the remaining
indices in $\mathcal I_4$.
A pair of frames is good if it contains $s$ columns from each side
whose cross Gram matrix has all singular values in
$[a_0\sqrt v,a_1\sqrt v]$. For $(a,b)=(1,2),(1,3)$, define
\[
 G_{ab}=\frac1{M^2}\sum_{k,l=1}^M
 \boldsymbol 1\{\text{frame }k\text{ in group }a
                    \text{ and frame }l\text{ in group }b\text{ are good}\},
 \qquad g_{ab}(\pi)=\mathbb E_V G_{ab}.
\]

Temporarily give $\pi$ the uniform distribution, independently of the
vectors. The $2L$ indices in any fixed pair of frame positions then
have the uniform without-replacement law. The application following
Lemma~\ref{lem:cross-frames} shows that each such pair is good with
probability at least $15/16$, provided $n\ge n_F$. Consequently,
\begin{equation}\label{eq:C-mean-good-fraction}
 \mathbb E_\pi g_{12}(\pi)\ge15/16,
 \qquad \mathbb E_\pi g_{13}(\pi)\ge15/16.
\end{equation}
There is therefore a deterministic permutation $\pi_0$ such that
$g_{12}(\pi_0)+g_{13}(\pi_0)\ge15/8$. Since each summand is at
most one, both are at least $7/8$. Fix this permutation for the
rest of the proof. It depends only on the laws of the vectors,
so the $V_i$ remain independent and the frames are deterministic.

Changing one $V_i$ affects at most $M$ of the $M^2$ indicators in
$G_{ab}$, and hence changes $G_{ab}$ by at most $1/M$. Put
$\eta_0=7/8-3/4=1/8$. Bounded differences gives
\[
 \mathbb P\{G_{ab}<3/4\}
 \le \exp(-2\eta_0^2M^2/n).
\]
For $n\ge6L$ we have $M\ge n/(6L)$, and a union bound yields
\begin{equation}\label{eq:HF-good-fraction}
 \mathbb P(\mathcal E^c)\le2\exp\{-n/(1152L^2)\},\qquad
 \mathcal E=\{G_{12}\ge3/4,\ G_{13}\ge3/4\}.
\end{equation}
The constant depends only on the fixed $L$.
The event $\mathcal E$ depends only on the vectors $V_i$, not on
the signs or on the frequency $t$. No independence between different
frame-pair indicators is needed.

\medskip\noindent\textit{Step 4. The real symmetrization inequality.}
For independent real random vectors $Z,U,V,W$, a symmetric operator
$Q$, and a real quadratic polynomial
$P(x)=\langle Qx,x\rangle+\langle\ell,x\rangle+c$, Lemma~6.1 of
\citet{GoetzeZaitsev2014} states
\[
 2|\mathbb E e^{itP(Z+U+V+W)}|^2
 \le\mathbb E e^{2it\langle Q(Z-Z'),U-U'\rangle}
    +\mathbb E e^{2it\langle Q(Z-Z'),V-V'\rangle},
\]
where primes denote independent copies. This is an inequality for
real random vectors and contains no complex cutoffs.

For fixed $\mathcal D$, take the four independent sign sums over
$\mathcal I_1,\ldots,\mathcal I_4$ as $Z,U,V,W$, and take
\[
 P(x)=\|b+x\|^2/(2n),\qquad Q=I_p/(2n).
\]
This choice accounts both for the deterministic shift $b$ and for the
unused indices. The source inequality then gives
\begin{equation}\label{eq:C-real-symmetrization}
 |\Psi_{\mathcal D}(t)|
 \le\sqrt{\{F_{12}(t)+F_{13}(t)\}/2}
 \le2^{-1/2}\{F_{12}(t)^{1/2}+F_{13}(t)^{1/2}\},
\end{equation}
where
\[
 F_{ab}(t)=\mathbb E_\epsilon
 \exp\left\{\frac{it}{n}
   \left\langle\sum_{i\in\mathcal I_a}(\epsilon_i-\epsilon'_i)V_i,
                  \sum_{j\in\mathcal I_b}(\epsilon_j-\epsilon'_j)V_j
   \right\rangle\right\}.
\]
The factor is $t/n$, because the source bilinear form contains $2tQ$.
Conditioning on either side shows that $F_{ab}(t)\in[0,1]$.

\medskip\noindent\textit{Step 5. Selecting good pairs before two applications of H\"older.}
Fix $(a,b)$ and a realization in $\mathcal E$. Put
$q=\lfloor3M/8\rfloor$. There are $q$ rows of the good-pair graph,
each containing at least $q$ good neighbors. Indeed, otherwise the
number of good pairs would be at most
\[
 (q-1)M+(M-q+1)(q-1)<2qM\le3M^2/4,
\]
contrary to $G_{ab}\ge3/4$. Choose the first $q$ such rows in a fixed
lexicographic order, forming a set $\mathcal J$, and in each row $k$
choose its first $q$ good neighbors, forming $\mathcal K_k$.
These choices depend only on the frame data and not on $t$ or on the signs.
Since $n\ge24L$, one has $M\ge8$ and
\begin{equation}\label{eq:C-selected-size}
 q\ge M/4\ge n/(24L).
\end{equation}

Let $A_k$ and $B_l$ be the symmetrized sign sums in the corresponding
frames. Their characteristic functions $a_k(x)$ and $b_l(x)$ are
products of squared cosines, so lie in $[0,1]$. Put $\xi=t/n$ and
$B=\sum_{l=1}^M B_l$. Delete all first-side factors except the selected
rows, and apply H\"older with exponent $q$:
\begin{align*}
 F_{ab}(t)
 &=\mathbb E_B\prod_{k=1}^M a_k(\xi B)\\
 &\le\mathbb E_B\prod_{k\in\mathcal J}a_k(\xi B)\\
 &\le\prod_{k\in\mathcal J}
          \{\mathbb E_Ba_k(\xi B)^q\}^{1/q}.
\end{align*}
Introduce $\widehat A_k$ and $\widehat B_l$ as sums of $q$
independent copies of their respective frame sums. Reinterpreting the
power as a characteristic function gives
\[
 \mathbb E_B a_k(\xi B)^q
 =\mathbb E_{\widehat A_k}\prod_{l=1}^M b_l(\xi\widehat A_k).
\]
Delete all factors except those in $\mathcal K_k$, and apply H\"older
again, separately in each selected row:
\[
 \mathbb E_B a_k(\xi B)^q
 \le\prod_{l\in\mathcal K_k}
      \{\mathbb E_{\widehat A_k}b_l(\xi\widehat A_k)^q\}^{1/q}.
\]
Consequently,
\begin{equation}\label{eq:C-selected-holder}
 F_{ab}(t)\le
 \prod_{k\in\mathcal J}\prod_{l\in\mathcal K_k}F_{kl}(t)^{1/q^2},
 \qquad
 F_{kl}(t)=\mathbb E e^{i(t/n)\widehat A_k^\top\widehat B_l}.
\end{equation}
Each factor is nonnegative, as follows by conditioning on either side.
The second neighbor set is allowed to depend on the first row; the
second H\"older inequality is applied separately for each row.

\medskip\noindent\textit{Step 6. Removing columns and identifying the arithmetic kernel.}
For each selected pair, choose the first admissible cross submatrix in
lexicographic order. Write its retained columns as $Z_1,\ldots,Z_s$
and $Z'_1,\ldots,Z'_s$, and put
\[
 C_{kl}=(\langle Z_j,Z'_m\rangle)_{j,m=1}^s,
 \qquad A_{kl}=C_{kl}/\sqrt v.
\]
Each matrix is fixed throughout the frequency integral and has its
singular values in $[a_0,a_1]$.

To justify deleting the other columns, first condition on
$\widehat B_l$:
\[
 F_{kl}(t)=\mathbb E_{\widehat B_l}
   \prod_{i\text{ in frame }k}
         \cos^{2q}\bigl((t/n)V_i^\top\widehat B_l\bigr).
\]
Every factor lies in $[0,1]$, so retaining only the selected left
columns increases the integrand. Reinterpret the retained product as
the characteristic function of the reduced left sum, condition on
that sum, and delete the unwanted right columns in the same way.
This sequential deletion is legitimate; deletion inside the original
oscillating exponential would not give the claimed inequality.

The retained sums are $2\sum_{j=1}^s(R_q)_jZ_j$ and
$2\sum_{m=1}^s(R'_q)_mZ'_m$. Therefore
\begin{equation}\label{eq:C-kernel-identification}
 F_{kl}(t)\le K_q(4t\sqrt v/n;A_{kl}).
\end{equation}
The factor four comes from the two factors two in these sums.
Take a square root in \eqref{eq:C-selected-holder}, and only then
apply the arithmetic--geometric mean inequality to its $q^2$ factors:
\begin{align*}
 F_{ab}(t)^{1/2}
 &\le\left\{\prod_{k\in\mathcal J}\prod_{l\in\mathcal K_k}
          K_q(4t\sqrt v/n;A_{kl})^{1/2}\right\}^{1/q^2}\\
 &\le\frac1{q^2}\sum_{k\in\mathcal J}\sum_{l\in\mathcal K_k}
          K_q(4t\sqrt v/n;A_{kl})^{1/2}.
\end{align*}
The arithmetic exponent is exactly $\beta=1/2$. With $s=6$,
$\beta s=3>2$ and $1/(\beta s)=1/3$.
The good-pair fraction reduces the number of artificial copies from
$M$ to $q$; it does not reduce the exponent. Retaining the two
H\"older products is essential: a square root of an arithmetic mean
could not instead be bounded by the mean of square roots.

\medskip\noindent\textit{Step 7. Integrating before averaging over the data.}
For each fixed realization in $\mathcal E$, all matrices $A_{kl}$
are fixed as $t$ varies. With
\[
 t_0'=\frac n{4\sqrt v}q^{-2/3},\qquad u=4t\sqrt v/n,
\]
we have $dt/t=du/u$. Since $U\le n/(4\sqrt v)$, the upper endpoint
in the $u$ variable is at most one. Lemma~\ref{lem:arithmetic} bounds
every integral in the last average by $C/q$ for a lower endpoint
at least $t_0'$. Because $q\ge n/(24L)$,
\[
 t_0'\le\frac{(24L)^{2/3}}4\frac{n^{1/3}}{\sqrt v}.
\]
Choose $C_H\ge(24L)^{2/3}/4$. Thus $t_0\ge t_0'$, and
\eqref{eq:C-real-symmetrization} implies on $\mathcal E$ that
\[
 \int_{t_0}^U|\Psi_{\mathcal D}(t)|\,\frac{dt}{t}\le C/q\le C/n.
\]
On $\mathcal E^c$, use $|\Psi_{\mathcal D}(t)|\le1$.
The ratio of endpoints satisfies
$U/t_0\le n^{2/3}/(4C_H)$, so its logarithm is at most $C\log n$
whenever the interval is nonempty.
Taking expectations in \eqref{eq:C-outer-average}, and using Tonelli's
theorem for the nonnegative integrands, now gives
\begin{align*}
 \int_{t_0}^U|\mathfrak f(t)|\,\frac{dt}{t}
 &\le\frac Cn+\mathbb P(\mathcal E^c)\log(U/t_0)\\
 &\le\frac Cn+C e^{-cn}\log n\le\frac Cn.
\end{align*}
This proves \eqref{eq:C-high-frequency-integral}.
\end{proof}

\section{Comparison estimates for the range
\texorpdfstring{$6\le v<7$}{6 <= v < 7}}
\label{sec:reserve6}
This appendix proves the two comparison bounds used in
Section~\ref{sec:case-v-near6}. They require a bounded trace and a
fixed lower bound on the sixth eigenvalue, so they remain valid
after truncation even if the normalized squared spectral mass is
slightly below six. The Gaussian vector retained in each replacement
background is specified explicitly. All Fourier integrals are
estimated in absolute value.

\begin{lemma}
\label{reserve6:sixth-eigenvalue}
Let $1\ge s_1\ge s_2\ge\cdots\ge0$ be the eigenvalues of $\Sigma$,
$r=\operatorname{tr}\Sigma$, and $v=\operatorname{tr}\Sigma^2$.
If $v\ge6$, then
\begin{equation}\label{reserve6:sixth-lower}
 s_6\ge\frac{v-5}{r-5}.
\end{equation}
If also $r\le C_\Sigma v$, then
\begin{equation}\label{reserve6:sixth-uniform}
 s_6\ge\frac1{6C_\Sigma-5}.
\end{equation}
\end{lemma}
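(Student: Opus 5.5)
The plan is to compare how the eigenvalues $s_j$ with $j\ge6$ contribute to $v$ and to $r$. The only facts needed are $s_j\le1$ and monotonicity.

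\emph{Step 1: isolate the tail.} Since $s_j\le1$ for $j\le5$, we have $\sum_{j\le5}s_j^2\le5$. Therefore
\[
 v-5\le\sum_{j\ge6}s_j^2 .
\]

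\emph{Step 2: compare the tail sums.} For $j\ge6$ monotonicity gives $s_j\le s_6$, so
\[
 \sum_{j\ge6}s_j^2\le s_6\sum_{j\ge6}s_j .
\]
We also need an upper bound on $\sum_{j\ge6}s_j$ in terms of $r-5$. The naive bound $\sum_{j\ge6}s_j=r-\sum_{j\le5}s_j\le r$ is too weak. The point is that $\sum_{j\le5}s_j^2\le\sum_{j\le5}s_j$, because $s_j\le1$. Hence
\[
 v-\sum_{j\le5}s_j=\sum_{j\ge6}s_j^2-\Bigl(\sum_{j\le5}s_j-\sum_{j\le5}s_j^2\Bigr)\le\sum_{j\ge6}s_j^2\le s_6\sum_{j\ge6}s_j=s_6\Bigl(r-\sum_{j\le5}s_j\Bigr).
\]

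\emph{Step 3: optimise over the head.} Write $a=\sum_{j\le5}s_j\in[0,5]$. Step 2 says $v-a\le s_6(r-a)$. If $r=5$ then $v\le5$, which is excluded by $v\ge6$; so $r>5$. Since $v\ge6>5\ge a$, we have $r-a>0$, and therefore
\[
 s_6\ge\frac{v-a}{r-a}.
\]
The function $a\mapsto(v-a)/(r-a)$ has derivative $(v-r)/(r-a)^2\le0$, because $v\le r$. It is thus nonincreasing in $a$, and its minimum over $a\le5$ is attained at $a=5$. This gives \eqref{reserve6:sixth-lower}.

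\emph{Step 4: the uniform bound.} Insert $r\le C_\Sigma v$ into \eqref{reserve6:sixth-lower}:
\[
 s_6\ge\frac{v-5}{C_\Sigma v-5}.
\]
Since $C_\Sigma\ge1$, this ratio is nondecreasing in $v$; its derivative has the sign of $-5+5C_\Sigma\ge0$. Its value at $v=6$ is $1/(6C_\Sigma-5)$, which proves \eqref{reserve6:sixth-uniform}.

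The only delicate point is Step 2. The crude bound $\sum_{j\ge6}s_j\le r$ only gives $(v-5)/r$. Getting $r-5$ in the denominator requires keeping the head mass $a$ and then using monotonicity in $a$ (Step 3).
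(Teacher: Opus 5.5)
Your proof is correct and is essentially the paper's argument. The paper bounds $s_j^2-s_6s_j\le1-s_6$ for $j\le5$ and $s_j^2\le s_6s_j$ for $j\ge6$, sums to get $v\le5+s_6(r-5)$, and then uses the same monotonicity of $(x-5)/(C_\Sigma x-5)$; your inequality $v-a\le s_6(r-a)$ uses the same head/tail split, and your calculus step in $a$ could be replaced by the one-line bound $a(1-s_6)\le5(1-s_6)$ (your Step~1 is never actually used).
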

\begin{proof}
For $j\le5$, the inequality $s_j\le1$ gives
\[
 s_j^2-s_6s_j\le1-s_6;
\]
indeed $s_j^2-s_6s_j-(1-s_6)=(s_j-1)(s_j+1-s_6)\le0$.
For $j\ge6$, $s_j^2\le s_6s_j$. Adding yields
$v\le5(1-s_6)+s_6r=5+s_6(r-5)$.
Since $r\ge v\ge6$, division by $r-5$ proves
\eqref{reserve6:sixth-lower}. Finally $C_\Sigma\ge1$, and
$(x-5)/(C_\Sigma x-5)$ is nondecreasing for $x\ge6$.
This proves \eqref{reserve6:sixth-uniform}.
\end{proof}

For $6\le v<7$, assumptions \textup{(A1)--(A3)} therefore provide
fixed bounds $r\le R_0=7C_\Sigma$ and
$s_6\ge a_0=(6C_\Sigma-5)^{-1}$.
The following proposition uses only these two spectral bounds. Its
hypotheses also hold for the normalized bounded proxy used in the
endpoint truncation argument, with adjusted fixed values of $R_0,a_0$.

\begin{proposition}
\label{reserve6:comparison}
Consider the bounded proxy array of the moderate frequency argument.
Thus $\Sigma=\sum_i\Sigma_i$, $\|\Sigma\|\le1$,
$\|\Sigma_i\|\le C_1/n$, and
$\operatorname{tr}\Sigma_i\le C_1r/n$.
The independent centered vectors $A_i$ satisfy
\[
 \operatorname{Cov}(A_i)=\Sigma_i/2,\qquad
 \|A_i\|\le L\sqrt r,\qquad
 \mathbb E|u^\top A_i|^4\le2K_4(u^\top\Sigma_i u)^2
 \quad\text{for real }u.
\]
Let $G_i\sim N(0,\Sigma_i/2)$ be mutually independent and independent
of the array $(A_i)$, and set
\[
 \begin{gathered}
 Z_i=A_i+G_i,\qquad G_0=\sum_iG_i,\qquad G\sim N(0,\Sigma),\\
 \mathfrak f_Z(t)=\mathbb E e^{it\|\sum_iZ_i\|^2/2},\qquad
 \widetilde{\mathfrak f}(t)=\mathbb E e^{it\|G\|^2/2}.
 \end{gathered}
\]
Suppose $r\le R_0$ and $s_6\ge a_0>0$. There is a constant $C$,
depending only on $R_0,a_0,L,C_1,K_4$, such that
\begin{equation}\label{reserve6:integral}
 \int_0^\infty
 \frac{|\mathfrak f_Z(t)-\widetilde{\mathfrak f}(t)|}{t}\,dt
 \le\frac Cn.
\end{equation}
Consequently,
\begin{equation}\label{reserve6:kolmogorov}
 \sup_x\left|\mathbb P\{\|\textstyle\sum_iZ_i\|^2/2\le x\}
             -\mathbb P\{\|G\|^2/2\le x\}\right|\le\frac Cn.
\end{equation}
\end{proposition}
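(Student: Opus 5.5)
We compare $\mathfrak f_Z$ with $\widetilde{\mathfrak f}$ by the same replacement scheme as the second half of the proof of Proposition~\ref{prop:moderate}. The Gaussian vector $G_0\sim N(0,\Sigma/2)$ is retained in every background, and the increments $A_i$ are replaced one at a time by independent $B_i\sim N(0,\Sigma_i/2)$. This yields the exact identity \eqref{moderate:first-replacement-explicit}: a sum of fourth-order Taylor remainders plus $\tfrac16\sum_i\mathbb Eh_i(V_i)$. Because $h_i$ is odd, the cubic term vanishes once $V_i$ is fully Gaussian. A second replacement sweep then gives the double-replacement identity \eqref{moderate:double-replacement-explicit}. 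Everything is therefore controlled by the derivative bounds for the background $F(z)=\mathbb E\Phi_t(G_0+W+z)$.

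The key change from the $v\ge13/2$ case is the damping factor. Now $v$ may be slightly below six, so we use the sixth eigenvalue instead of $(1+t^2)^{-v/4}$. Since $s_1,\dots,s_6\ge a_0$,
\[
 D_{1/2}(t)=\prod_j(1+t^2s_j^2/4)^{-1/4}\le(1+a_0^2t^2/4)^{-3/2}.
\]
We do not route through Lemma~\ref{lem:fixed-gaussian-reserve} as stated, since that lemma requires $h(t)\ge C(r+1)$, i.e.\ $t\lesssim\sqrt n$, whereas \eqref{reserve6:integral} is an integral over all $t>0$. We therefore split the integral at $t_1=c\sqrt n$ and treat the two ranges separately.

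On $(0,t_1]$, Lemma~\ref{lem:fixed-gaussian-reserve} and Lemma~\ref{lem:complete-remainders} apply with $r\le R_0$, giving the pointwise bound
\[
 |\mathfrak f_Z(t)-\widetilde{\mathfrak f}(t)|\le C D_{1/2}(t)\Big(\frac{t^2}{n}+\frac{t^3}{n}\Big),
\]
exactly as in \eqref{moderate:reserve-comparison}. Dividing by $t$ and integrating against the envelope $(1+a_0^2t^2/4)^{-3/2}$ leaves integrals of $t\,(1+t^2)^{-3/2}$ and $t^2(1+t^2)^{-3/2}$ over $(0,t_1]$. The first is bounded. The second grows like $\log t_1\sim\log n$, which would cost a logarithm.

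Avoiding this logarithm is the main obstacle. In the cubic term I would improve the bound on $\mathcal R_3(h_{ij,a},x)$: rather than bounding three derivatives of $F_{ij}$ by $t^3$, I would take one derivative on the exact Gaussian transform $\det(I-it\Sigma/2)^{-1/2}\exp\{itx^\top Rx/2\}$. A singleton factor there is $t\,u^\top R(W+z)$ with $\|R\Sigma R^*\|\le\min(1,t^{-1})$, so each random singleton costs only $\sqrt t$ rather than $t$. Moreover, for the second-order replacement the relevant third derivatives pair up, $\mathbb E[A_i^{\otimes3}]$ against $\mathbb E[A_j^{\otimes3}]$. Expanding the sixth-order tensor into singletons and pairs, every term then carries at most $t^{3}\cdot t^{-1/2}$ after using $\|R\|\le\min(1,2/(ts_{\min}))$ on the six leading directions. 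A cleaner alternative I would try first is to borrow an extra factor $(1+t^2)^{-1/2}$ from the seventh-and-beyond spectral mass. The fallback is to use $s_6\ge a_0$ together with the additional decay of the mixed moments. Both aim at the same effect: the integrand decays like $t^{-1-\eta}$ on the moderate range, so the total integral is $C/n$.

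On $[t_1,\infty)$, the crude triangle bound suffices for the Gaussian part: $|\widetilde{\mathfrak f}(t)|\le D(t)\le C t^{-3}$, so $\int_{t_1}^\infty D(t)\,dt/t\le Cn^{-3/2}$. For $\mathfrak f_Z$ I would use the retained Gaussian component $G_0$ directly. Conditioning on $\sum_iA_i$ gives
\[
 |\mathfrak f_Z(t)|\le\det(I+t^2\Sigma^2/4)^{-1/4}\le(1+a_0^2t^2/4)^{-3/2},
\]
uniformly in the shift, so its tail integral is also $O(n^{-3/2})$. This makes \eqref{reserve6:integral} a genuine all-frequency bound.

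Finally, \eqref{reserve6:kolmogorov} follows from Lemma~\ref{lem:gaussian-smoothing} with $U\to\infty$, since $\widehat v\ge11/2$ holds in the application.
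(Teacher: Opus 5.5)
\textbf{There is a genuine gap: the logarithm is never removed.} Up to that point your setup matches the paper's. It uses the replacement identities \eqref{moderate:first-replacement-explicit} and \eqref{moderate:double-replacement-explicit} with $G_0$ kept in every background, damping through $s_6\ge a_0$, and the tail bound $|\mathfrak f_Z|,|\widetilde{\mathfrak f}|\le D(t)$. You also correctly diagnose that the double-replacement term $CD(t)t^3/n$ costs $\log n/n$. Removing that logarithm, however, is essentially the whole content of the proposition, and none of your suggested fixes achieves it.
\begin{itemize}
\item Borrowing $(1+t^2)^{-1/2}$ from the seventh eigenvalue is impossible. $s_7$ may be arbitrarily small or zero; for rank six with $v=6$ one has exactly $D(t)=(1+t^2/4)^{-3/2}$. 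The constant must not depend on $s_7$.
\item Using $\|R\|\le\min(1,2/(ts_{\min}))$ fails for the same reason, and the $A_i$ are not confined to the six leading directions.
\item Counting $\sqrt t$ per random singleton is exactly what Lemma~\ref{lem:fixed-gaussian-reserve} already does to produce $t^3$, so it is not an improvement.
\item A uniform pointwise gain of $t^{-1/2}$ is not what the available bounds give. Suppose $\Sigma$ has, besides six unit eigenvalues, about $1/\epsilon$ eigenvalues of size $\epsilon$. For $t\approx1/\epsilon$ the contraction $c(t)=\tr(\Sigma R\Sigma R^*)=\sum_js_j^2/(1+t^2s_j^2/4)$ is then of order $1/t$. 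Term-by-term bounds at such $t$ give again only $D(t)t^3/n$.
\end{itemize}

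\textbf{The missing idea} is that the decay comes from integrating in $t$ through the spectrum, not pointwise. The paper proceeds as follows.
\begin{itemize}
\item It cuts at $\tau=n^{1/3}$, so the crude tail costs $C\tau^{-3}=C/n$. On $[0,\tau]$, boundedness yields projection moments through order eight (this needs $r^2t^3/n\le C$) and derivative bounds through order seven. Your cut at $c\sqrt n$ only enlarges the range where an improved bound would be needed.
\item It writes $\mathcal R_3(h_a,b)=\frac16D^6F(0)[a,a,a,b,b,b]+\mathcal R_4(h_a,b)$. The $\mathcal R_4$ parts sum to $CD(t)t^{7/2}n^{-3/2}$, with integral $Cn^{-3/2}\tau^{1/2}$. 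The sixth-derivative term with $b=B_j$ vanishes.
\item For $\sum_{i<j}\mathbb E D^6F_{ij}(0)[A_i,A_i,A_i,A_j,A_j,A_j]$ it expands into pairs and singletons. Each term is bounded, using fourth moments only, by contractions such as $\tr(\Sigma_iR\Sigma_jR^*)$, $\tr(|R|\Sigma_i)$ and $\tr(M\Sigma_i)$. Their double sums are governed by $c(t)$ and $\bar c(t)\le c(t)+\delta r$, which gives \eqref{reserve6:sixth-aggregate}.
\item The logarithm disappears because
\[
\int_0^\infty c(t)\,dt=\pi r,\qquad \int_0^\infty t\,c(t)^2\,dt\le2r^2,\qquad \int_0^\infty t^{1/2}c(t)^{3/2}\,dt\le Cr^{3/2},
\]
even though pointwise one only has $c(t)\le r/t$.
\end{itemize}

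\textbf{Minor point.} Lemma~\ref{lem:gaussian-smoothing} assumes $v\ge11/2$, which is not a hypothesis of this proposition. The Kolmogorov bound should instead come from direct Fourier inversion, the characteristic functions being integrable because $s_6\ge a_0$.
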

\begin{proof}
All constants below depend only on the constants listed in the
proposition. It is enough to treat sufficiently large $n$. For bounded
$n$, the fourth moment bounds and $r\le R_0$ control the integral near
zero, while the independent Gaussian component bounds the remaining integral by a
fixed constant; increasing $C$ then proves \eqref{reserve6:integral}.
Set $\tau=n^{1/3}$ and introduce independent
$B_i\sim N(0,\Sigma_i/2)$. The Gaussian vector $G_0$ is retained in every
replacement background. The exact identities
\eqref{moderate:first-replacement-explicit} and
\eqref{moderate:double-replacement-explicit} apply to this array.

\paragraph{Projection moments and derivatives through order seven.}
Put
\[
 R=(I-it\Sigma/2)^{-1},\quad
 D(t)=\det(I+t^2\Sigma^2/4)^{-1/4},\quad
 \beta_t^2=\min(1,t^{-1}).
\]
For a centered mixed background $W$, each index contributes at most one
increment of type $A_i$ or $B_i$. Consequently, for real $u$, scalar
Rosenthal inequalities and the fourth moment bound give, with
$\rho_u=u^\top R\Sigma R^*u$,
\begin{align*}
 \mathbb E|u^\top RW|^6
 &\le C\left\{\rho_u^3+\frac rn\|R^*u\|^4\rho_u\right\},\\
 \mathbb E|u^\top RW|^8
 &\le C\left\{\rho_u^4+\frac {r^2}n\|R^*u\|^6\rho_u\right\}.
\end{align*}
Indeed, for a bounded increment $Y_i$, its sixth or eighth projection
moment is bounded by $L^2r\|R^*u\|^2$ or
$L^4r^2\|R^*u\|^4$ times its fourth projection moment. Furthermore,
\[
 \sum_i\mathbb E|u^\top RY_i|^4
 \le C\max_i(u^\top R\Sigma_iR^*u)\rho_u
 \le\frac Cn\|R^*u\|^2\rho_u.
\]
The Gaussian increments obey the same inequalities after increasing $C$;
their sixth and eighth moment sums are bounded by
$C n^{-2}\|R^*u\|^4\rho_u$ and
$C n^{-3}\|R^*u\|^6\rho_u$, respectively.
Complex projections follow by separating real and imaginary parts.
Since $\rho_u\le C\beta_t^2\|u\|^2$ and $r^2t^3/n\le C$ for $t\le\tau$,
\[
 \|u^\top RW\|_{L^\ell}\le C_\ell\beta_t\|u\|,\qquad 1\le\ell\le8.
\]
The Gaussian integration formula and its singleton/pair derivative
expansion therefore prove, for $0\le k\le7$,
\[
 \begin{gathered}
 |D^kF(z)[u_1,\ldots,u_k]|
 \le C_kD(t)t^{k/2}(1+\sqrt t\|z\|)^k
       \prod_{\ell=1}^k\|u_\ell\|,\\
 F(z)=\mathbb E e^{it\|G_0+W+z\|^2/2}.
 \end{gathered}
\]
For $t\le1$ use $t\beta_t\le\sqrt t$; for $t\ge1$ use
$\beta_t=t^{-1/2}$.

\paragraph{One more term in the second replacement.}
For $h_a(x)=D^3F(x)[a,a,a]$, let $\mathcal R_k$ denote the complete
Taylor remainder after degree $k-1$. The exact identity
\[
 \mathcal R_3(h_a,b)
 =\frac16D^6F(0)[a,a,a,b,b,b]+\mathcal R_4(h_a,b)
\]
is useful because the last remainder uses fourth moments of $b$ only.
The derivative estimate through order seven implies
\begin{equation}\label{reserve6:fourth-of-third}
 |\mathcal R_4(h_a,b)|
 \le CD(t)t^{7/2}\|a\|^3\|b\|^4.
\end{equation}
For $\sqrt t\|b\|\le1$, use the integral Taylor remainder and the
seventh derivative. For $\sqrt t\|b\|>1$, bound the defining difference
using the third derivative at $b$ and the derivatives of orders three
through six at zero. Every term is bounded by
$CD(t)t^{3/2}\|a\|^3(\sqrt t\|b\|)^4$.

Apply this identity to the exact double replacement
formula \eqref{moderate:double-replacement-explicit}, with the background $F_{ij}$. The sixth derivative term
with $b=B_j$ vanishes because the centered Gaussian cubic tensor is zero.
The sum of the errors \eqref{reserve6:fourth-of-third}, for $a=A_i$ and $b=A_j$ or $B_j$, is at most
\begin{equation}\label{reserve6:seventh-error}
 CD(t)t^{7/2}n^2(r/n)^{3/2}(r/n)^2
 \le CD(t)t^{7/2}n^{-3/2}.
\end{equation}
The sixth eigenvalue bound gives $D(t)\le C\min(1,t^{-3})$.
Hence the integral of \eqref{reserve6:seventh-error} against $dt/t$ up to $\tau$ is
$C n^{-3/2}(1+\tau^{1/2})\le C/n$.
The complete fourth remainders in
\eqref{moderate:first-replacement-explicit} already
have bound $CD(t)t^2r^2/n$ and are integrable with the same rate.
It remains to treat
\begin{equation}\label{reserve6:leading-sixth}
 \sum_{i<j}\mathbb E D^6F_{ij}(0)[A_i,A_i,A_i,A_j,A_j,A_j].
\end{equation}

\paragraph{Covariance contractions in the sixth derivative.}
We work on $1\le t\le\tau$; on $(0,1)$ the original derivative bound
already suffices. Put $\delta=(r/n)^{1/3}$ and
\[
 M=R(\Sigma+\delta I)R^*,\qquad
 c(t)=\operatorname{tr}(\Sigma R\Sigma R^*),\qquad
 \bar c(t)=\operatorname{tr}(\Sigma M).
\]
The sixth projection estimate, $\rho_u\le\|R^*u\|^2$, and
Lyapunov's inequality give
\begin{equation}\label{reserve6:inflated-projections}
 \mathbb E|u^\top RW|^k\le C_k(u^\top Mu)^{k/2},
 \qquad 1\le k\le6.
\end{equation}
For the covariance bookkeeping define
\[
 a_i=\operatorname{tr}(|R|\Sigma_i),\quad
 c_i=\operatorname{tr}(M\Sigma_i),\quad
 c_{ij}=\operatorname{tr}(\Sigma_iR\Sigma_jR^*).
\]
All are nonnegative. Their sums and maxima obey
\begin{equation}\label{reserve6:trace-bookkeeping}
\begin{gathered}
 \sum_i c_i=\bar c,\quad \sum_{i,j}c_{ij}=c,\quad \sum_i a_i\le r,\\
 \max_i a_i\le Cr/n,\quad \max_{i,j}c_{ij}\le Cr/n^2,\\
 \max_i c_i\le\frac{Cr}{n}(t^{-1}+\delta)\le\frac C{nt},
 \qquad \bar c\le c+\delta r.                  
\end{gathered}
\end{equation}
The last maximum uses $\delta\tau\le C$ and $r\le R_0$.

After integrating $G_0$, the sixth derivative is a sum over partitions
of the six labeled directions $a,a,a,b,b,b$ into pairs and singletons.
Each pair contributes $it\,u^\top Ru'$ and each singleton contributes
$it\,u^\top RW$. The remaining exponential has modulus at most one.
The number of terms is an absolute constant. Apply \eqref{reserve6:inflated-projections} to the
singleton factors before averaging independent $a=A_i,b=A_j$.
The following table lists every possible type, up to interchange of
$a,b$. The powers of $t$ and the common determinant $D(t)$ are omitted.
Here ``$aa$'' denotes an internal pair, ``$ab$'' a cross pair, and
unpaired directions are singletons.
\begin{equation}\label{reserve6:partition-table}
\begin{array}{c|c|c}
\text{pairs}&\text{singletons}&\text{bound after all expectations}\\ \hline
\varnothing&a^3b^3&C c_i^{3/2}c_j^{3/2}\\
aa&ab^3&C a_i c_i^{1/2}c_j^{3/2}\\
ab&a^2b^2&C c_{ij}^{1/2}c_i c_j\\
aa,bb&ab&C a_i a_j c_i^{1/2}c_j^{1/2}\\
aa,ab&b^2&C a_i c_{ij}^{1/2}c_j\\
ab,ab&ab&C c_{ij}c_i^{1/2}c_j^{1/2}\\
aa,bb,ab&\varnothing&C a_i a_j c_{ij}^{1/2}\\
ab,ab,ab&\varnothing&C c_{ij}^{3/2}
\end{array}
\end{equation}
These bounds use fourth moments only. For example,
$\mathbb E|A_i^\top RA_j|^4\le Cc_{ij}^2$ by conditioning first
on $A_j$, and the quadratic form fourth moment inequality follows
from the directional fourth moment assumption. Similarly,
$\mathbb E|A_i^\top RA_i|^2\le Ca_i^2$ and
$\mathbb E(A_i^*MA_i)^2\le Cc_i^2$.
For the $aa,ab$ row, first condition on $b$ and use
\[
 \mathbb E_a|a^\top Ra|\,|a^\top Rb|
 \le Ca_i(b^*R^*\Sigma_iRb)^{1/2},
\]
then apply Cauchy--Schwarz in $b$ with $b^*Mb$.
For the one-cross-pair row, one may condition successively:
\[
 \begin{aligned}
 \mathbb E_a|a^\top Rb|(a^*Ma)
 &\le Cc_i(b^*R^*\Sigma_iRb)^{1/2},\\
 \mathbb E_b(b^*Mb)(b^*R^*\Sigma_iRb)^{1/2}
 &\le Cc_j\sqrt{c_{ij}}.
 \end{aligned}
\]
The other rows follow directly from these same second and fourth
moment inequalities and Cauchy--Schwarz.

Although $W=W_{ij}$ varies with the pair of indices, all its
dependence has been removed by the uniform bound \eqref{reserve6:inflated-projections}. Thus the
following sums are legitimate and do not identify different backgrounds.
Using \eqref{reserve6:trace-bookkeeping}, the sum of the no-pair row is at most
$C\bar c^2/(nt)$. The sum of either one-pair type is at most
$C\bar c^{3/2}/(n\sqrt t)$: for example,
\[
 (\sum_i a_i\sqrt{c_i})(\sum_j c_j^{3/2})
 \le (\max_i a_i)\sqrt{n\bar c}\,
       \sqrt{\max_i c_i}\,\bar c
 \le\frac C{n\sqrt t}\bar c^{3/2}.
\]
For the cross-pair row, Cauchy--Schwarz gives the sharper bound
$\sqrt c\sum_i c_i^2\le C\bar c^{3/2}/(nt)$.
Every two-pair row sums to at most $C\bar c/n$. In particular,
\[
 (\sum_i a_i\sqrt{c_i})^2\le n(\max_i a_i)^2\bar c
 \le C\bar c/n,
\]
and
\[
 \sum_{i,j}a_i\sqrt{c_{ij}}c_j
 \le(\max_i a_i)\sqrt c\sqrt{n\sum_jc_j^2}
 \le C\bar c/n.
\]
The two-cross-pair sum is bounded by
$\sum_{i,j}c_{ij}\sqrt{c_i c_j}\le(\max_i c_i)c\le C\bar c/n$.
The two three-pair rows sum to at most
$C(\sqrt c+c)/n$, since
\[
 \sum_{i,j}a_i a_j\sqrt{c_{ij}}
 \le\sqrt c\sum_i a_i^2\le C\sqrt c/n,
 \qquad
 \sum_{i,j}c_{ij}^{3/2}\le\sqrt{\max c_{ij}}\,c\le Cc/n.
\]
We have enlarged $i<j$ to all ordered pairs only in nonnegative
bounds. Restoring the powers of $t$, the absolute value of \eqref{reserve6:leading-sixth} is
therefore at most
\begin{equation}\label{reserve6:sixth-aggregate}
 \frac{CD(t)}n\left\{
 t^5\bar c(t)^2+t^{9/2}\bar c(t)^{3/2}
 +t^4\bar c(t)+t^3[\sqrt{c(t)}+c(t)]\right\}.
\end{equation}

\paragraph{Spectral integration.}
If $s_j$ are the eigenvalues of $\Sigma$, then
\[
 c(t)=\sum_j\frac{s_j^2}{1+t^2s_j^2/4},\qquad c(t)\le r/t.
\]
Direct integration and Minkowski's inequality in the indicated
weighted $L^p$ spaces give
\begin{align*}
 \int_0^\infty c(t)\,dt&=\pi r,\\
 \int_0^\infty t\,c(t)^2\,dt&\le2r^2,\\
 \int_0^\infty t^{1/2}c(t)^{3/2}\,dt
 &\le \sqrt2 B(3/4,3/4)r^{3/2}.
\end{align*}
For example the weighted $L^2$ norm of the $j$th summand is
$\sqrt2s_j$, and its weighted $L^{3/2}$ norm is a fixed constant
times $s_j$.
Because $\bar c\le c+\delta r$, $\delta\tau\le C$, and $r\le R_0$,
\[
 \int_1^\tau\{t\bar c^2+t^{1/2}\bar c^{3/2}+\bar c\}\,dt\le C.
\]
Also $c(t)\le r/t$ yields
$\int_1^\infty[\sqrt{c(t)}+c(t)]\,dt/t\le2\sqrt r+r$.
Multiplying \eqref{reserve6:sixth-aggregate} by $1/t$ and using $D(t)\le Ct^{-3}$ proves
that its integral is $C/n$. Together with \eqref{reserve6:seventh-error}, the first fourth
remainders, and the elementary estimates on $(0,1)$, this proves the integral estimate up to $\tau$.

For every $t$, conditioning on $(A_i)$ and integrating over
the independent vector $G_0$ gives
$|\mathfrak f_Z(t)|\le D(t)$; the same bound holds for
$|\widetilde{\mathfrak f}(t)|$. Since
$D(t)\le Ct^{-3}$ for $t\ge1$,
\[
 \int_\tau^\infty
 \frac{|\mathfrak f_Z(t)-\widetilde{\mathfrak f}(t)|}{t}\,dt
 \le C\tau^{-3}=C/n.
\]
This proves \eqref{reserve6:integral}. Both characteristic functions
are integrable, and their difference divided by $t$ is absolutely
integrable by this estimate. Fourier inversion for their distribution
function difference yields \eqref{reserve6:kolmogorov}.
\end{proof}

\subsection{A second-order replacement expansion at the endpoint}
\label{sec:second-order-endpoint}

We expand the original-to-proxy comparison to second order.
This separates a linear term whose background contains an independent
Gaussian vector with covariance $(\Sigma-\Sigma_i)/2$ from a
quadratic term with an extra factor $n^{-1}$. Both terms are integrable
under the sixth-eigenvalue bound.
The argument applies to nonidentically distributed increments and
allows arbitrarily many additional small eigenvalues.

We first work with a bounded normalized array. Fix constants
$R_0,a_0,T_0>0$ and suppose, in addition to the balance and fourth
moment conditions, that
\begin{equation}\label{endpoint6:local-assumptions}
 r\le R_0,\qquad s_6(\Sigma)\ge a_0,\qquad
 \|X_i\|\le L\sqrt r.
\end{equation}
All constants in this appendix can depend on these fixed constants
and on $C_1,K_4,L$. They never depend on the ambient dimension.
Set $\tau=T_0n^{1/3}$. Statements involving this cutoff are made for
sufficiently large $n$; the remaining finite range is absorbed into
the constant in the final probability bound. Use the order from
Lemma~\ref{lem:prefix-order} and the proxies $Z_i=A_i+G_i$ from
Lemma~\ref{lem:third-moment-proxy}.

\subsubsection{Two elementary replacement identities}

For a smooth function $F$ and a vector $a$, define translation and
Taylor remainder operators by
\[
 T_aF(z)=F(z+a),\qquad
 \mathcal R_aF(z)=T_aF(z)-\sum_{k=0}^3\frac1{k!}D^kF(z)[a^{\otimes k}].
\]
All these constant-coefficient translation and differential operators
commute.

\begin{lemma}\label{endpoint6:double-remainder}
Let $H,\Lambda,t>0$, with $t\le\Lambda^2$. Suppose that $F$ is eight
times continuously differentiable and
\begin{align*}
 |F(z)|&\le C H,\\
 |D^kF(z)[u_1,\ldots,u_k]|
 &\le C_kH(\Lambda+t\|z\|)^k\prod_{\ell=1}^k\|u_\ell\|,
 \qquad 1\le k\le8.
\end{align*}
Then for every pair of real vectors $a,b$,
\begin{equation}\label{endpoint6:double-complete}
 |\mathcal R_a\mathcal R_bF(0)|
 \le C H\Lambda^8\|a\|^4\|b\|^4.
\end{equation}
\end{lemma}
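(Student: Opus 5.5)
The proof is a two-scale argument. Near zero we use integral Taylor remainders; far out we use the defining differences. Put $\alpha=\Lambda\|a\|$ and $\beta=\Lambda\|b\|$. The operators commute, so $\mathcal R_a\mathcal R_bF=\mathcal R_a(\mathcal R_bF)$. We split into four cases according to whether $\alpha\le1$ and whether $\beta\le1$. Throughout, the hypothesis $t\le\Lambda^2$ gives $t\|sa\|\le\Lambda^2\|a\|=\Lambda\alpha$. Hence, along a segment $sa$ with $0\le s\le1$ and $\alpha\le1$, the weight $(\Lambda+t\|z\|)$ stays bounded by $2\Lambda$. The same holds for $b$, and for $sa+s'b$ when both are small.

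\emph{Case $\alpha,\beta\le1$.} Write both remainders in integral form:
\[
 \mathcal R_aG(z)=\int_0^1\frac{(1-s)^3}{6}D^4G(z+sa)[a^{\otimes4}]\,ds,
\]
applied with $G=\mathcal R_bF$. Then expand $D^4\mathcal R_bF(sa)[a^{\otimes4}]$ again by the integral formula in $b$, using that $D^4$ commutes with $\mathcal R_b$. This gives an average of $D^8F(sa+s'b)[a^{\otimes4},b^{\otimes4}]$. The hypothesis with $k=8$, together with $\Lambda+t\|sa+s'b\|\le3\Lambda$, yields $CH\Lambda^8\|a\|^4\|b\|^4$.

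\emph{Case $\alpha\le1<\beta$.} Expand only in $a$. Then $\mathcal R_a\mathcal R_bF(0)$ becomes the integral of
\[
 D^4F(sa+b)[a^{\otimes4}]-\sum_{k\le3}\tfrac1{k!}D^{4+k}F(sa)[a^{\otimes4},b^{\otimes k}].
\]
Each term is bounded by $CH\|a\|^4(\Lambda+t\|sa\|+t\|b\|)^{4+k}\|b\|^k$. Since $t\|b\|\le\Lambda\beta$ and $\beta>1$, this is at most $CH\|a\|^4\Lambda^4\beta^{4}\cdot C$. Here we use that $(1+\beta)^{4}\le16\beta^4$, and that the $D^{4+k}$ terms give $\Lambda^{4+k}\|b\|^k(1)^{\cdot}\le\Lambda^4\beta^k\le\Lambda^4\beta^4$. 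The result is $CH\Lambda^8\|a\|^4\|b\|^4$. The case $\beta\le1<\alpha$ is symmetric.

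\emph{Case $\alpha,\beta>1$.} Bound the full defining double difference term by term. It is a finite sum of terms $\frac1{j!k!}D^{j+k}F(x)[a^{\otimes j},b^{\otimes k}]$, with $x\in\{0,a,b,a+b\}$ and $j,k\le3$ whenever the corresponding translation is absent. Each is at most $CH(\Lambda+t\|a\|+t\|b\|)^{j+k}\|a\|^j\|b\|^k\le CH(1+\alpha+\beta)^{j+k}\alpha^j\beta^k$. The $k=0$ case uses $|F|\le CH$. Since $\alpha,\beta>1$, the factor $(1+\alpha+\beta)^{j+k}\alpha^j\beta^k$ must be compared with $\alpha^4\beta^4$. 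This comparison is the main obstacle.

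A crude bound fails, because $(1+\alpha+\beta)^{6}\alpha^3\beta^3$ need not be $\le C\alpha^4\beta^4$ when $\alpha\gg\beta$. The weight must be handled sharply. For a term evaluated at base point $x$, the derivative weight is $(\Lambda+t\|x\|)^{j+k}$, not $(\Lambda+t\|a\|+t\|b\|)$. Only the translated directions enter $x$, and they are exactly those not differentiated. So a term with $j$ derivatives in $a$ carries a translation by $a$ only when $j=0$. The weight is then at most $(1+\beta)^{k}\le C\beta^k$, or $(1+\alpha+\beta)^0=1$, and so on. Checking the four base points then gives:
\begin{itemize}
\item at $x=0$: $\alpha^j\beta^k$;
\item at $x=a$ ($j=0$): $(1+\alpha)^k\beta^k\le C\alpha^3\beta^3$;
\item at $x=b$: $(1+\beta)^j\alpha^j\le C\alpha^3\beta^3$;
\item at $x=a+b$: only $|F|\le CH$.
\end{itemize}
All of these are $\le C\alpha^4\beta^4$. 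If this bookkeeping proves delicate in the mixed cases, we fall back on the mixed form from the second case: first write $\mathcal R_a$ as the $a$-difference of the function $\mathcal R_bF$, and then bound $\mathcal R_bF$ and its derivatives at $0$ and $a$ by the single-variable complete-remainder estimate, in the same manner as Lemma~\ref{lem:complete-remainders}.
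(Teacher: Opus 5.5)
Your proof is correct and is essentially the paper's argument. It uses the same three regimes. When $\Lambda\|a\|,\Lambda\|b\|\le1$, you take the double integral remainder with weight at most $3\Lambda$. When both exceed one, you bound the defining difference term by term, using the weight at each term's actual base point. In the mixed case, you take an integral remainder in the small vector and the defining difference in the large one, with derivatives up to order seven; this is the paper's mixed case with the roles of $a$ and $b$ interchanged. One small correction: in the mixed case, the blanket weight $(\Lambda+t\|sa\|+t\|b\|)^{4+k}$ you first write would be too large for the $D^{4+k}F(sa)$ terms. What makes the bound work is the base-point weight $(\Lambda+t\|sa\|)^{4+k}\le(2\Lambda)^{4+k}$, which is the one you actually use.
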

\begin{proof}
Put $x=\Lambda\|a\|$ and $y=\Lambda\|b\|$.
If $x,y\le1$, use Taylor's integral remainder in both variables.
The resulting eighth derivative is evaluated at $sa+ub$,
$0\le s,u\le1$, where
$\Lambda+t\|sa+ub\|\le3\Lambda$. This proves the bound.

If $x,y\ge1$, expand the defining difference. The terms are
$F(a+b)$, derivatives of orders at most three at $a$ or $b$, and
mixed derivatives at zero of order at most six. For example,
\[
 |D^kF(b)[a^{\otimes k}]|
 \le C H x^k(1+y)^k\le C Hx^4y^4\quad(0\le k\le3),
\]
where the case $k=0$ uses the uniform bound on $F$.
The origin terms are bounded by $CHx^ky^\ell$, $k,\ell\le3$,
and obey the same estimate.

Finally suppose $x\ge1\ge y$. Write
\[
 \mathcal R_a\mathcal R_bF(0)
 =\mathcal R_bF(a)
   -\sum_{k=0}^3\frac1{k!}D_a^k\mathcal R_bF(0).
\]
Taylor expansion only in $b$ bounds the first term by
$CHy^4(2+x)^4$ and the term indexed by $k$ by $CHx^ky^4$.
The latter uses derivatives through order $k+4\le7$ at $ub$.
All these quantities are at most $CHx^4y^4$. Interchange $a,b$
for the remaining case. Zero directions give zero identically.
\end{proof}

Write $\mu_i=\mathcal L(X_i)$, $\nu_i=\mathcal L(Z_i)$ and
$\delta_i=\mu_i-\nu_i$. Products of measures below denote convolution.
The exact second-order identity is
\begin{equation}\label{endpoint6:bergstrom-identity}
 \prod_{i=1}^n\mu_i-\prod_{i=1}^n\nu_i
 =\sum_i\delta_i\prod_{k\ne i}\nu_k
  +\sum_{i<j}\delta_i\delta_j
     \prod_{\substack{k<j\\k\ne i}}\nu_k
     \prod_{k>j}\mu_k.
\end{equation}
To verify it, expand $\mu_i=\nu_i+\delta_i$. A term with exactly
one $\delta$ occurs in the first sum. A term with at least two
$\delta$ factors occurs in the second sum at the positions of its
first two such factors. Thus every term occurs exactly once.
This is an algebraic identity for finite signed measures; no limiting
series is used.

Since $\delta_i$ annihilates all polynomials of degree at most three,
\begin{equation}\label{endpoint6:annihilation}
 \int\!\!\int F(a+b)\,\delta_i(da)\delta_j(db)
 =\int\!\!\int\mathcal R_a\mathcal R_bF(0)
                       \,\delta_i(da)\delta_j(db).
\end{equation}
In particular the right side requires only fourth moments in each
of the two independent directions, once
\eqref{endpoint6:double-complete} has been established.
In the applications below, the subtracted terms have degree at most
three in each direction separately, and the derivatives at a shifted
argument grow polynomially of the stated order. Their integrability
and Fubini therefore follow from the fourth total-variation moments;
\eqref{endpoint6:double-complete} directly controls the remaining integral.

\subsubsection{Eighth derivatives from the fourth moment assumption}

For $i<j$ the background in the second sum of
\eqref{endpoint6:bergstrom-identity} is $G_{Q_{ij}}+W_{ij}$, where
\begin{align*}
 Q_{ij}&=\frac12\sum_{\substack{k<j\\k\ne i}}\Sigma_k,&
 W_{ij}&=\sum_{\substack{k<j\\k\ne i}}A_k+\sum_{k>j}X_k,\\
 \alpha_j&=\frac{j-2}{2n},&
 S_{ij}:=\Cov(W_{ij})&=\Sigma-\Sigma_i-\Sigma_j-Q_{ij}.
\end{align*}
The ordering and balance conditions imply, uniformly in $i<j$,
\begin{equation}\label{endpoint6:prefix-errors}
 Q_{ij},S_{ij}\preceq\Sigma,\qquad
 \|Q_{ij}-\alpha_j\Sigma\|_{\mathrm{HS}}
 +\|S_{ij}-(1-\alpha_j)\Sigma\|_{\mathrm{HS}}
 \le Cn^{-1/2}.
\end{equation}
Indeed deleting one covariance from a prefix changes the
Hilbert--Schmidt norm by at most $C/n$, and changing its covariance
fraction from $(j-1)/(2n)$ to $(j-2)/(2n)$ has the same order.
Notice that $0\le\alpha_j\le1/2$.

\begin{lemma}
\label{endpoint6:eighth-derivatives}
There are fixed positive constants $c_0,c,C$ such that, for
$0<t\le\tau$, the functions
\[
 F_{ij}(z)=\mathbb E\exp\{it\|G_{Q_{ij}}+W_{ij}+z\|^2/2\}
\]
satisfy the hypotheses of Lemma~\ref{endpoint6:double-remainder} with
\begin{gather}
 H_6(t)=(1+c_0t^2)^{-3/2}
       +C\exp\{-cn/(t^2+t)\}+Ct^3/n^2,
 \label{endpoint6:H6}\\
 \Lambda_j(t)=\sqrt t+t b_j(t),\qquad
 b_j(t)^2=\min\{1,C/(\alpha_jt)\},
 \label{endpoint6:lambda}
\end{gather}
where $b_j=1$ if $\alpha_j=0$.
\end{lemma}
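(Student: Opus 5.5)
The plan is to repeat the argument of Lemma~\ref{lem:unused-block}, now through derivative order eight. The damping envelope $H$ there will be replaced by $H_6$, which comes from the sixth-eigenvalue bound instead of the margin $v\ge v_0$.

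\textbf{Step 1: uniform bound and the resolvent.} The hypothesis $|F_{ij}|\le CH_6$ is the case $k=0$. Set $A=(I-itQ_{ij})^{-1}$. The spectral decomposition gives $\|A\|\le1$. By \eqref{endpoint6:prefix-errors}, $\alpha_j\Sigma\preceq Q_{ij}+Cn^{-1/2}I$, and $t n^{-1/2}\le T_0n^{-1/6}\le1$ for large $n$. Together with $\|AQ_{ij}A^*\|\le1/(2t)$ this yields
\[
 \|A\Sigma A^*\|\le b_j(t)^2 .
\]
This is the same computation as in \eqref{moderate:resolvent-bound}.

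\textbf{Step 2: derivative identity.} Gaussian integration as in \eqref{moderate:gaussian-prefix-transform} writes $D^kF_{ij}(z)[u_1,\dots,u_k]$ as
\[
 \mathbb E\{\Phi_t(G_{Q_{ij}}+W_{ij}+z)P_{k}(W_{ij}+z)\},
\]
where $P_k$ is a sum over partitions into singletons $it\,u^\top A x$ and pairs $it\,u^\top Au'$.

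\textbf{Step 3: projection moments to order eight.} The moments needed are
\[
 \|u^\top AW_{ij}\|_{L^q}\le C\|u\|b_j(t),\qquad q\le8 .
\]
They follow from the Rosenthal-type bounds used in Proposition~\ref{reserve6:comparison}, since each bounded increment satisfies $\|Y\|\le L\sqrt r$. The eighth moment carries an extra term $(r^2/n)\|u\|^6\rho_u$. This term is absorbed because $r\le R_0$ and $b_j^2\ge\min(1,C/t)\ge n^{-1/3}/T_0$ on $t\le\tau$, so $r^2/n\le Cb_j^6$.

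With these moments, each singleton contributes at most $t\|u\|(b_j+\|z\|)$ and each pair at most $t\|u\|\|u'\|$. Since $t\le\Lambda^2$ and $tb_j\le\Lambda$, this gives the required form $(\Lambda+t\|z\|)^k$.

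\textbf{Step 4: the damping factor $H_6$.} I expect this to be the main obstacle, and I would split into two cases.

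\emph{Case $\alpha_j\ge1/4$.} I would integrate $G_{Q_{ij}}$ first. This gives modulus at most $\det(I+t^2Q_{ij}^2)^{-1/4}$. Because $Q_{ij}\succeq\alpha_j\Sigma-Cn^{-1/2}I$, the six top eigenvalues of $Q_{ij}$ are at least $a_0/8$. This yields the bound $(1+c_0t^2)^{-3/2}$.

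\emph{Case $\alpha_j<1/4$.} Here $S_{ij}\succeq\Sigma/2$ up to an $O(n^{-1/2})$ error in Hilbert--Schmidt norm. I would partition the indices of $W_{ij}$ into a fixed number $2m$ of classes, as in Lemma~\ref{lem:prefix-damping}, with each class covariance close to $S_{ij}/(2m)$ in HS norm. I would then pair the classes into $m\ge5$ blocks and apply Lemma~\ref{lem:shifted-cf} to each pair. The determinant factor becomes $\det(I+\tfrac{t^2}2K)^{-1/4}$, where $K$ has at least six eigenvalues $\ge c a_0^2/m^2$; this comes from perturbing $\Sigma^2$ by an HS-small error, using $s_6\ge a_0$. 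The errors from Lemma~\ref{lem:shifted-cf} are $Ce^{-cn/(t^2+t)}$ and $Cr^{3/2}t^3/n^2\le Ct^3/n^2$.

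The partition is needed because a monomial in $P_k$ with $k\le8$ singletons touches at most eight blocks, so I take $m=9$ to keep one block unused. I then condition on everything except that block, exactly as in Lemma~\ref{lem:unused-block}, and apply H\"older to the remaining factors.

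The delicate point is the HS perturbation step. It must give six eigenvalues bounded below, rather than the margin $v\ge v_0$, and it must hold uniformly even if $\widehat v<6$. I would use Weyl's inequality together with $\|\cdot\|\le\|\cdot\|_{\mathrm{HS}}\le\varepsilon$, choosing $\varepsilon\le a_0^2/4$.
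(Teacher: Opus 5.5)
Your proposal is correct and follows the paper's proof essentially step for step. It uses the same resolvent bound, eighth projection moments from Rosenthal plus the truncation radius (absorbed through $r^2t^3/n\le R_0^2T_0^3$), eighteen classes paired into nine blocks whose paired products have six eigenvalues bounded below via $s_6(\Sigma)\ge a_0$, and unused-block conditioning. The only deviation is your separate case $\alpha_j\ge1/4$, where you integrate $G_{Q_{ij}}$ first; this is valid but unnecessary, since $\alpha_j\le1/2$ gives $1-\alpha_j\ge1/2$ and the paper runs the block argument for every $j$.
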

\begin{proof}
We provide the additional moment and partition details needed to
extend Lemma~\ref{lem:unused-block} from order four to order eight.
Set $R=(I-itQ_{ij})^{-1}$. The proof of
\eqref{moderate:resolvent-bound}, using
\eqref{endpoint6:prefix-errors}, gives
\begin{equation}\label{endpoint6:resolvent8}
 \|R\|\le1,\qquad R\Sigma R^*\preceq Cb_j(t)^2 I.
\end{equation}
Here $t n^{-1/2}\le T_0n^{-1/6}$, so the perturbation is uniformly
controlled for sufficiently large $n$.

Let $V$ be any block of the independent bounded increments in
$W_{ij}$, and let $w$ be a real coefficient vector. Put
$q=w^\top\Sigma w$. The fourth moment and balance conditions give
\[
 \sum_{k\text{ in }V}\mathbb E|w^\top V_k|^4
 \le\frac Cn\|w\|^2 q.
\]
Since $\|V_k\|\le C\sqrt r$, its eighth projection moment is at
most $Cr^2\|w\|^4$ times its fourth projection moment. The scalar
Rosenthal inequality at order eight therefore gives
\begin{equation}\label{endpoint6:projection8}
 \mathbb E|w^\top V|^8
 \le C\left\{q^4+\frac{r^2}{n}\|w\|^6q\right\}.
\end{equation}
The same inequality for complex coefficients follows by splitting
real and imaginary parts, using the Hermitian quadratic form for $q$.
Apply this to $w=R^\top u$ (equivalently its real and imaginary parts).
By \eqref{endpoint6:resolvent8}, $q\le Cb_j^2\|u\|^2$ and
$\|w\|\le\|u\|$. Since $b_j^2\ge c\min(1,t^{-1})$ and
$r^2t^3/n\le R_0^2T_0^3$, we obtain
\begin{equation}\label{endpoint6:projection8-final}
 \|u^\top RV\|_{L^\ell}\le C_\ell b_j\|u\|,
 \qquad 1\le\ell\le8.
\end{equation}
Thus the eighth moments used here are consequences of fixed-radius
truncation. There is no eighth moment assumption on the original
increments or on the replacement directions.

Partition $W_{ij}$ into $m=18$ classes and pair them into nine
independent blocks. Write $\beta=1-\alpha_j\ge1/2$. The covariance
partition calculation in Lemma~\ref{lem:prefix-damping} gives, for
each two paired class covariances $S_\ell,S_{\ell'}$,
\[
 B_1=mS_\ell=\beta\Sigma+E,\qquad
 B_2=mS_{\ell'}=\beta\Sigma+E',\qquad
 \|E\|,\|E'\|\le\eta_n=C_mn^{-1/2}.
\]
There is a direct bound on the sixth eigenvalue of their product.
The variational characterization of eigenvalues gives
\begin{align*}
 \lambda_6(B_2^{1/2}B_1B_2^{1/2})
 &\ge\beta\lambda_6(B_2^{1/2}\Sigma B_2^{1/2})
                  -\eta_n\|B_2\|\\
 &=\beta\lambda_6(\Sigma^{1/2}B_2\Sigma^{1/2})
                  -\eta_n\|B_2\|\\
 &\ge\beta^2s_6(\Sigma)^2-2\beta\eta_n-\eta_n^2
 \ge a_0^2/8
\end{align*}
for sufficiently large $n$. The equality uses the coincidence of
the eigenvalues of $CC^*$ and $C^*C$. Thus the actual product
$S_{\ell'}^{1/2}S_\ell S_{\ell'}^{1/2}$ has at least six eigenvalues at least
$a_0^2/(8m^2)$. Its determinant contribution in
\eqref{new:shifted-cf} is at most
\[
 (1+c_0t^2)^{-3/2},\qquad c_0=a_0^2/(16m^2).
\]
The shifted characteristic-function estimate supplies in addition
the exponential and polynomial errors in \eqref{endpoint6:H6},
since $r$ is bounded. This proves the exact sixth-dimensional
power without an exponent loss and remains valid after covariance
truncation.
All the partition statements are deterministic existence statements
for the particular pair $i,j$. The partition need not be common to
different pairs.

Gaussian integration gives exactly
\eqref{moderate:gaussian-polynomial-identity} through order eight:
each singleton contributes $it\,u^\top R(W_{ij}+z)$ and each pair
contributes $it\,u^\top Ru'$. Expanding the singleton factors over
the nine blocks, any monomial of degree at most eight leaves an
unused block. Condition on the Gaussian vector and all the other
blocks. The remaining phase has a real shift, so the unused block
provides $CH_6(t)$ uniformly in that shift. Apply H\"older's
inequality and \eqref{endpoint6:projection8-final} to the polynomial
in the other blocks. A singleton is bounded by
$Ct(b_j+\|z\|)\|u\|$, and a pair by $t\|u\|\|u'\|$.
Since $t\le\Lambda_j^2$, this gives
\[
 |D^kF_{ij}(z)[u_1,\ldots,u_k]|
 \le C_kH_6(t)(\Lambda_j+t\|z\|)^k\prod_\ell\|u_\ell\|,
 \quad 0\le k\le8.
\]
The case $k=0$ is uniform in $z$, as required. Every differentiation
is justified by bounded non-Gaussian increments and finite Gaussian
moments.
\end{proof}

\subsubsection{Summing the linear and quadratic terms}

\begin{proposition}
\label{endpoint6:first-comparison}
Under \eqref{endpoint6:local-assumptions}, let
$\mathfrak f(t)=\mathbb E e^{it\|\sum_iX_i\|^2/2}$ and
$\mathfrak f_Z(t)=\mathbb E e^{it\|\sum_iZ_i\|^2/2}$.
For $0<t\le\tau$,
\begin{equation}\label{endpoint6:first-pointwise}
 |\mathfrak f(t)-\mathfrak f_Z(t)|
 \le \frac Cn(1+t^2)^{-3/2}t^2
       +\frac C{n^2}H_6(t)(t^4+t^6).
\end{equation}
Consequently,
\begin{equation}\label{endpoint6:first-integrated}
 \int_0^\tau|\mathfrak f(t)-\mathfrak f_Z(t)|\,\frac{dt}{t}
 \le C/n.
\end{equation}
\end{proposition}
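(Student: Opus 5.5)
We start from the exact second-order Bergström identity \eqref{endpoint6:bergstrom-identity}. Integrate $\Phi_t(x)=e^{it\|x\|^2/2}$ against both sides. This writes $\mathfrak f(t)-\mathfrak f_Z(t)$ as a linear sum $\sum_i\int F_i\,d\delta_i$ plus a quadratic sum $\sum_{i<j}\iint F_{ij}(a+b)\,\delta_i(da)\delta_j(db)$. Here
\[
 F_i(z)=\mathbb E\Phi_t\Bigl(\sum_{k\ne i}Z_k+z\Bigr),
\]
and $F_{ij}$ is the function of Lemma~\ref{endpoint6:eighth-derivatives}.

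For the quadratic sum, use \eqref{endpoint6:annihilation} to replace $F_{ij}(a+b)$ by $\mathcal R_a\mathcal R_bF_{ij}(0)$. Lemma~\ref{endpoint6:eighth-derivatives} verifies the hypotheses of Lemma~\ref{endpoint6:double-remainder}, so each term is bounded by $CH_6(t)\Lambda_j(t)^8$ times the fourth total-variation moments of $\delta_i$ and $\delta_j$. By \eqref{moderate:increment-moments-explicit} and $r\le R_0$, these moments are $O(n^{-2})$ each. Summing over $i<j$ gives
\[
 \frac{C}{n^4}H_6(t)\sum_{j}j\,\Lambda_j(t)^8 \le \frac{C}{n^3}H_6(t)\sum_j \Lambda_j(t)^8 .
\]
Since $\Lambda_j^8\le C(t^4+t^8b_j^8)$ and $b_j^2=\min\{1,C/(\alpha_jt)\}$ with $\alpha_j\asymp j/n$, a position-sum computation as in Lemma~\ref{lem:position-sums} gives $\frac1n\sum_j t^8b_j^8\le C(t^4+t^7/n)$ and more crudely $\le Ct^6$ after using $t\le\tau\le n$. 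This yields the second term $Cn^{-2}H_6(t)(t^4+t^6)$. If the $b_j^8$ tail is too crude, we would keep the weight $j$ and sum $j\min(1,(n/(jt))^4)$ directly. That sum is $O(n^2/t^2)$ for $t\ge1$, which again gives at most $t^6$.

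For the linear sum, the background $\sum_{k\ne i}Z_k$ contains the Gaussian vector $\sum_{k\ne i}G_k\sim N(0,(\Sigma-\Sigma_i)/2)$. Since $\delta_i$ annihilates cubic polynomials, $\int F_i\,d\delta_i=\mathbb E\mathcal R_4(F_i,X_i)-\mathbb E\mathcal R_4(F_i,Z_i)$. We then apply the analogue of \eqref{moderate:complete-fourth-remainder} with the retained Gaussian of covariance $(\Sigma-\Sigma_i)/2$. The proof of Lemma~\ref{lem:fixed-gaussian-reserve} carries over verbatim: the resolvent bound holds since $\Sigma-\Sigma_i\succeq\Sigma-C/n$, and the sixth projection moments hold since $t^2r/n\le1$. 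This gives $|\mathcal R_4|\le C\widetilde D(t)t^2\|x\|^4$, where
\[
 \widetilde D(t)=\det\bigl(I+t^2(\Sigma-\Sigma_i)^2/4\bigr)^{-1/4}.
\]
Weyl's inequality gives $s_6(\Sigma-\Sigma_i)\ge a_0-C/n\ge a_0/2$, so $\widetilde D(t)\le C(1+t^2)^{-3/2}$. Fourth moments are $O(n^{-2})$, and summing over $n$ indices gives the first term $Cn^{-1}(1+t^2)^{-3/2}t^2$.

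For the integrated bound, the first term contributes $n^{-1}\int_0^\infty t(1+t^2)^{-3/2}dt=C/n$. In the second term, $(1+c_0t^2)^{-3/2}(t^4+t^6)/t$ integrates up to $\tau$ to $O(\tau^4)=O(n^{4/3})$, giving $O(n^{-2/3})$, which is too weak. \textbf{This is the main obstacle.} The fix is to sharpen the quadratic count using the genuine decay of $b_j$. Splitting the $\Lambda_j^8$ bound, the $t^4$ part integrates to $O(\tau)$, giving $n^{-2}\tau=o(1/n)$. For the $b_j$ part we use the weighted sum: $\frac1{n^4}\sum_j j\,t^8b_j^8\le Cn^{-2}t^6\min(1,t^{-2})$, which is $\le Cn^{-2}t^4$ for $t\ge1$. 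Its integral against $H_6\,dt/t$ is again $O(n^{-2}\tau)$.

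The exponential error in $H_6$ is negligible by $h(\tau)\gg\log n$. The polynomial error $t^3/n^2$ multiplied by $t^6/n^2$ integrates to $O(\tau^9/n^4)=O(n^{-1})$. So the second term integrates to $C/n$, proving \eqref{endpoint6:first-integrated}. Checking that this sharper bookkeeping still fits the displayed pointwise form is the step I expect to need care.
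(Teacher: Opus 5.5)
Your decomposition is the paper's. You use the Bergstr\"om identity \eqref{endpoint6:bergstrom-identity}. The linear terms are handled by the complete fourth remainder, with the retained Gaussian of covariance $(\Sigma-\Sigma_i)/2$ and Weyl's bound $s_6\ge a_0/2$. The quadratic terms go through \eqref{endpoint6:annihilation}, Lemmas~\ref{endpoint6:double-remainder} and~\ref{endpoint6:eighth-derivatives}, and the weighted position sum $\sum_j(j-1)b_j^8\le Cn^2/t^2$; your fallback is exactly \eqref{endpoint6:weighted-position-sum}.

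Discard your first attempt at the position sum. Dropping the weight via $j\le n$ and using $\frac1n\sum_j b_j^8\le C\min(1,t^{-1})$ makes $\frac1n\sum_j t^8b_j^8$ of order $t^7$ for $1\le t\le cn$, not $C(t^4+t^7/n)$. Only the weighted sum gives the $t^6$ in \eqref{endpoint6:first-pointwise}.

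The integration step is wrong as written, through two compensating errors.

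First, the ``main obstacle'' is an arithmetic slip. For $t\ge1$ one has $(1+c_0t^2)^{-3/2}\le Ct^{-3}$, so
\[
 \frac C{n^2}\int_0^\tau(t^3+t^5)(1+c_0t^2)^{-3/2}\,dt\le\frac C{n^2}\bigl(1+\tau+\tau^3\bigr).
\]
Since $\tau^3=T_0^3n$, this determinant part is already $C/n$. The growth is $\tau^3$, not $\tau^4$; a $\tau^4$ would only arise from the lossy $t^7$ bound.

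Second, your proposed sharpening is false. You computed yourself that $\sum_j j\,b_j^8\asymp n^2/t^2$. Hence $n^{-4}\sum_j j\,t^8b_j^8\asymp n^{-2}t^6$ for $t\ge1$. Inserting a further factor $\min(1,t^{-2})$ counts the same position-sum gain twice, so there is no $n^{-2}t^4$ bound, and the argument you give for \eqref{endpoint6:first-integrated} rests on an invalid inequality.

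The repair is simply to integrate the pointwise bound directly, as displayed above. Treat the polynomial part of $H_6$ (giving $Cn^{-4}(\tau^7+\tau^9)\le C/n$) and the exponential part as you already did. This is how the paper concludes.
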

\begin{proof}
Apply \eqref{endpoint6:bergstrom-identity} to the bounded function
$\Phi_t(x)=e^{it\|x\|^2/2}$.

For its first sum, the background has Gaussian covariance
$Q_i=(\Sigma-\Sigma_i)/2$ and independent bounded part
$\sum_{k\ne i}A_k$. Eigenvalue perturbation gives
$s_6(Q_i)\ge a_0/2-C_1/(2n)\ge a_0/4$. Hence
\[
 D_i(t):=\det(I+t^2Q_i^2)^{-1/4}\le C(1+t^2)^{-3/2}.
\]
For $R_i=(I-itQ_i)^{-1}$, the identity $\Sigma=2Q_i+\Sigma_i$
implies
\[
 R_i\Sigma R_i^*
 \preceq (t^{-1}+C_1/n)I\preceq Ct^{-1}I\quad(1\le t\le\tau\le n).
\]
For $t<1$ use $\|R_i\|\le1$. The fourth projection moments and
Gaussian integration now prove the derivative bound with the retained Gaussian component
through order four, with determinant $D_i$. The complete fourth
remainder argument of Lemma~\ref{lem:complete-remainders} gives
$CD_i(t)t^2\|x\|^4$ for all $x$. Since
\[
 \int\|x\|^4\,|\delta_i|(dx)
 \le\mathbb E\|X_i\|^4+\mathbb E\|Z_i\|^4\le Cn^{-2},
\]
the sum of the linear terms is at most
$Cn^{-1}(1+t^2)^{-3/2}t^2$.

For a term $i<j$, use \eqref{endpoint6:annihilation},
Lemmas~\ref{endpoint6:double-remainder} and
\ref{endpoint6:eighth-derivatives}, and the two fourth moment bounds.
Its modulus is at most $CH_6(t)\Lambda_j(t)^8n^{-4}$.
There are $j-1$ pairs with second index $j$. For $1\le t\le n$,
\begin{align*}
 \sum_{j=2}^n(j-1)b_j(t)^8
 &\le C\left\{1+\sum_{k=1}^{n-2}(k+1)
                       \min\left(1,\frac{Cn}{kt}\right)^4\right\}
 \le Cn^2/t^2.
\end{align*}
Indeed, split the sum at $k=\lfloor Cn/t\rfloor$; below this
point sum $k+1$, and above it sum $C(n/t)^4k^{-3}$.
If the splitting point exceeds $n$, sum all terms by $Cn^2$.
The term $k=0$ is also covered because $t\le n$.
It follows that
\begin{equation}\label{endpoint6:weighted-position-sum}
 \sum_{j=2}^n(j-1)\Lambda_j(t)^8
 \le Cn^2(t^4+t^6),\qquad 0<t\le n.
\end{equation}
For $t<1$ this follows directly from $b_j\le1$.
Summing the quadratic terms proves \eqref{endpoint6:first-pointwise}.

The linear term is integrable because
$\int_0^\infty t(1+t^2)^{-3/2}\,dt=1$.
The determinant part of the quadratic term has integral
\[
 \frac C{n^2}\int_0^\tau(t^3+t^5)(1+c_0t^2)^{-3/2}\,dt
 \le\frac C{n^2}(1+\tau+\tau^3)\le C/n.
\]
The polynomial error contributes
\[
 \frac C{n^4}\int_0^\tau(t^6+t^8)\,dt
 \le Cn^{-4}(\tau^7+\tau^9)\le C/n.
\]
Finally $n/(\tau^2+\tau)\ge cn^{1/3}$, so the exponential error
contributes at most
$Cn^{-2}(\tau^4+\tau^6)e^{-cn^{1/3}}\le C/n$.
This proves \eqref{endpoint6:first-integrated}.
\end{proof}

\section{Proof of Proposition \ref{prop:ce-without-A3}}
\label{sec:ce-without-A3}

\begin{proof}[Proof of Proposition \ref{prop:ce-without-A3}]
Set $p_n=k+n^2$ and
\begin{equation}\label{eq:ce-A3-covariance}
 \Sigma_n=\diag\bigl(I_k,n^{-3/2}I_{n^2}\bigr).
\end{equation}
Then $\|\Sigma_n\|=1$, $r_n=k+\sqrt n$, and $v_n=k+1/n$.
In particular, $r_n/v_n\to\infty$, while $v_n\ge6$ whenever
$k\ge6$.

Let $\boldsymbol Z_{n,1},\ldots,\boldsymbol Z_{n,n}$ be independent
$\mathcal N(0,\Sigma_n)$ vectors. Independently of these vectors,
let $\eta_1,\ldots,\eta_n$ be independent random variables satisfying
\[
 \mathbb P(\eta_i=0)=\mathbb P(\eta_i=\sqrt2)=\frac12.
\]
Define
\[
 \boldsymbol X_{n,i}=\frac{\eta_i\boldsymbol Z_{n,i}}{\sqrt n}.
\]
The vectors are centered, and $\mathbb E\eta_i^2=1$ gives
\[
 \Sigma_{n,i}=\Var(\boldsymbol X_{n,i})=\frac{\Sigma_n}{n},
 \qquad \Var(\boldsymbol S_n)=\Sigma_n.
\]
Both inequalities in \textup{(A1)} hold with $C_1=1$. Since
$\mathbb E\eta_i^4=2$, the Gaussian fourth-moment identity gives
\[
 \mathbb E|u^\top\boldsymbol X_{n,i}|^4
 =\frac{6}{n^2}(u^\top\Sigma_nu)^2
 =6(u^\top\Sigma_{n,i}u)^2.
\]
Thus \textup{(A2)} holds with $K_4=6$. For every real $t$, we also have
\begin{equation}\label{eq:ce-A3-subGaussian}
 \mathbb E\exp(tu^\top\boldsymbol X_{n,i})
 =\frac12+\frac12\exp\bigl(t^2u^\top\Sigma_{n,i}u\bigr)
 \le\exp\bigl(t^2u^\top\Sigma_{n,i}u\bigr).
\end{equation}
This proves the uniform sub-Gaussian assertion.

Put
\[
 T_n=\frac1n\sum_{i=1}^n\eta_i^2.
\]
Conditionally on $\eta_1,\ldots,\eta_n$, the sum
$\boldsymbol S_n$ is Gaussian with covariance $T_n\Sigma_n$.
Consequently, on an auxiliary probability space we may represent
\begin{equation}\label{eq:ce-A3-mixture}
 \|\boldsymbol S_n\|^2\overset{d}=T_nQ_n,\qquad
 Q_n\overset{d}=\|\widetilde{\boldsymbol S}_n\|^2,
\end{equation}
where $T_n$ and $Q_n$ are independent. By
\eqref{eq:ce-A3-covariance},
\[
 Q_n\overset{d}=Y_k+n^{-3/2}W_n,\qquad
 Y_k\sim\chi_k^2,\quad W_n\sim\chi_{n^2}^2,
\]
with $Y_k$ and $W_n$ independent. Since
\[
 \mathbb E(n^{-3/2}W_n)=\sqrt n,\qquad
 \Var(n^{-3/2}W_n)=\frac2n,
\]
we obtain
\begin{equation}\label{eq:ce-A3-Gaussian-limit}
 Q_n-\sqrt n\ \xrightarrow{d}\ Y_k.
\end{equation}
The variables $\eta_i^2-1$ are Rademacher. The one-dimensional central
limit theorem therefore yields
\[
 \sqrt n(T_n-1)=\frac1{\sqrt n}\sum_{i=1}^n(\eta_i^2-1)
 \ \xrightarrow{d}\ Z,\qquad Z\sim\mathcal N(0,1).
\]
Independence in \eqref{eq:ce-A3-mixture} gives joint convergence of
$Q_n-\sqrt n$ and $\sqrt n(T_n-1)$ to independent $Y_k$ and $Z$.
Since $T_n\to1$ in probability, Slutsky's theorem gives
\begin{equation}\label{eq:ce-A3-sum-limit}
 T_nQ_n-\sqrt n
 =T_n(Q_n-\sqrt n)+\sqrt n(T_n-1)
 \ \xrightarrow{d}\ Y_k+Z.
\end{equation}
Both limiting distribution functions are continuous at zero. At the
squared radius $z_n=\sqrt n$, it follows that
\[
 \mathbb P(\|\widetilde{\boldsymbol S}_n\|^2\le\sqrt n)
 \longrightarrow\mathbb P(Y_k\le0)=0,
\]
whereas
\[
 \mathbb P(\|\boldsymbol S_n\|^2\le\sqrt n)
 \longrightarrow\mathbb P(Y_k+Z\le0)
 =\mathbb E\Phi(-Y_k)=c_k>0.
\]
Hence $\liminf_n\Delta_n(\mathcal B_0)\ge c_k$. Finally,
\[
 \frac{r_n^{3/2}}n
 =\frac{(k+\sqrt n)^{3/2}}n
 \sim n^{-1/4}\longrightarrow0,
\]
which proves \eqref{eq:ce-A3-conclusion}.
\end{proof}

\section{Proof of Proposition \ref{prop:ce-v4} }
\label{sec:ce-v4}

\begin{proof}
We have $\Sigma_{n,i}=I_4/n$ and $\Sigma_n=I_4$. Thus
\textup{(A1), (A3)(i)} hold with
$C_1=C_\Sigma=1$, while $v_n=4$ violates \textup{(A3)(ii)}. For every
$u\in\mathbb R^4$,
\[
 \mathbb E|u^\top\boldsymbol X_{n,i}|^4
 =\frac1{n^2}\left(3\|u\|^4-2\sum_{j=1}^4u_j^4\right)
 \le3(u^\top\Sigma_{n,i}u)^2,
\]
which verifies \textup{(A2)} with $K_4=3$. The vectors
$\boldsymbol\xi_i$ have deterministic norm two. Their projections
also satisfy, for every $t\in\mathbb R$,
\[
 \mathbb E\exp(tu^\top\boldsymbol X_{n,i})
 =\prod_{j=1}^4\cosh\left(\frac{tu_j}{\sqrt n}\right)
 \le\exp\left(\frac{t^2\|u\|^2}{2n}\right).
\]

Take $n=2m$, where $m$ is an odd positive integer, and define
\[
 K_j=\frac12\sum_{i=1}^{2m}\varepsilon_{ij}\in\mathbb Z,
 \qquad j=1,\ldots,4.
\]
The variables $K_1,\ldots,K_4$ are independent, and
\begin{equation}\label{eq:ce-v4-norm}
 \|\boldsymbol S_{2m}\|^2=\frac2m\sum_{j=1}^4K_j^2.
\end{equation}
We first establish the uniform local lower bound
\begin{equation}\label{eq:ce-v4-local}
 \mathbb P(K_j=a)=2^{-2m}\binom{2m}{m+a}
 \ge\frac{c}{\sqrt m},\qquad |a|\le\sqrt m,\quad a\in\mathbb Z,
\end{equation}
for sufficiently large $m$, with an absolute $c>0$. Stirling's
formula gives $2^{-2m}\binom{2m}{m}\ge c_1m^{-1/2}$. By symmetry,
it suffices to consider $a\ge0$, for which
\[
 \frac{\mathbb P(K_j=a)}{\mathbb P(K_j=0)}
 =\prod_{\ell=1}^a\frac{m-\ell+1}{m+\ell}
 =\prod_{\ell=1}^a\left(1-\frac{2\ell-1}{m+\ell}\right).
\]
For $m\ge16$ and $a\le\sqrt m$, each fraction in the last product
lies in $[0,1/2]$. Using $\log(1-x)\ge-2x$ on this interval gives
\[
 \log\frac{\mathbb P(K_j=a)}{\mathbb P(K_j=0)}
 \ge-\frac2m\sum_{\ell=1}^a(2\ell-1)
 =-\frac{2a^2}{m}\ge-2.
\]
This proves \eqref{eq:ce-v4-local}.

Let
\[
 r_4(m)=\#\{a\in\mathbb Z^4:a_1^2+a_2^2+a_3^2+a_4^2=m\}.
\]
Every vector counted by $r_4(m)$ satisfies $|a_j|\le\sqrt m$.
Independence, \eqref{eq:ce-v4-norm}, and \eqref{eq:ce-v4-local}
therefore imply
\begin{equation}\label{eq:ce-v4-atom}
 \mathbb P(\|\boldsymbol S_{2m}\|^2=2)
 =\sum_{\substack{a\in\mathbb Z^4\\\sum_{j=1}^4a_j^2=m}}
      \prod_{j=1}^4\mathbb P(K_j=a_j)
 \ge\frac{c^4}{m^2}r_4(m).
\end{equation}
Jacobi's four-square identity, with signs and order included, states
that
\[
 r_4(m)=8\sum_{\substack{d\mid m\\4\nmid d}}d;
\]
see \citet{Hirschhorn1982}. Since $m$ is odd, this reduces to
\begin{equation}\label{eq:ce-v4-Jacobi}
 r_4(m)=8\sigma_1(m),\qquad \sigma_1(m)=\sum_{d\mid m}d.
\end{equation}

If a distribution function $F$ has a jump of size $a$ at $x$ and
$H$ is continuous at $x$, then
\[
 a=F(x)-F(x-)
 \le |F(x)-H(x)|+|F(x-)-H(x)|
 \le2\sup_y|F(y)-H(y)|.
\]
The target squared norm has the continuous $\chi_4^2$ distribution.
Applying this observation to \eqref{eq:ce-v4-atom} and
\eqref{eq:ce-v4-Jacobi} gives, with an absolute constant $c_2>0$,
\begin{equation}\label{eq:ce-v4-lower}
 \Delta_{2m}(\mathcal B_0)\ge c_2\frac{\sigma_1(m)}{m^2},
 \qquad (2m)\Delta_{2m}(\mathcal B_0)\ge2c_2\frac{\sigma_1(m)}m.
\end{equation}

To make the last ratio unbounded using only elementary estimates,
choose
\[
 m_M=\prod_{j=1}^M(2j-1),\qquad n_M=2m_M.
\]
These integers $m_M$ are odd, and each $2j-1$, $j\le M$, divides
$m_M$. The divisor substitution $d\mapsto m_M/d$ yields
\[
 \frac{\sigma_1(m_M)}{m_M}
 =\sum_{d\mid m_M}\frac1d
 \ge\sum_{j=1}^M\frac1{2j-1}\longrightarrow\infty.
\]
It follows from \eqref{eq:ce-v4-lower} that
\[
 n_M\Delta_{n_M}(\mathcal B_0)\longrightarrow\infty,
\]
which proves \eqref{eq:ce-v4-conclusion}. Since $r_n^{3/2}=8$, any
bound of the form \eqref{eq:main-bound} would instead imply
$n\Delta_n(\mathcal B_0)\le8C$, a contradiction.
\end{proof}

\begin{remark}[What the two examples establish]
For $k\ge6$, the first example preserves \textup{(A1), (A2), (A3)(ii)}
and violates only \textup{(A3)(i)}. It shows that
Gaussian approximation need not even converge in a regime where
$r_n^{3/2}/n\to0$. The second example preserves
\textup{(A1), (A2), (A3)(i)} but has $v_n=4$,
violating only \textup{(A3)(ii)}. Its obstruction is arithmetic
concentration on individual spheres, and it rules out a uniform
$n^{-1}$ rate at that value of $v_n$. Both examples have uniformly
sub-Gaussian projections, so stronger directional moment assumptions
alone do not remove either obstruction. The second example does not
settle whether the full theorem can be extended to $v_n\ge5$, nor
does it prove optimality of the threshold six.
\end{remark}

\end{document}